
\documentclass{amsart}

\usepackage[pdfauthor={Dimitrios Chatzakos},
pdftitle={},
            pdfkeywords={Hyperbolic lattice points},
             pdfcreator={Pdflatex}]
{hyperref}
\hypersetup{colorlinks=true}
\usepackage{xcolor}
\usepackage{enumerate}

\usepackage{amssymb,amsmath,latexsym,amsthm}
\usepackage[english]{babel}

\newtheorem{theorem}{Theorem}[section]
\newtheorem{lemma}[theorem]{Lemma}
\newtheorem{proposition}[theorem]{Proposition}

\theoremstyle{definition}
\newtheorem{definition}[theorem]{Definition}

\numberwithin{equation}{section}
\DeclareMathOperator*{\vol}{vol}


\setlength{\textwidth}{6.2in} \setlength{\textheight}{8.6in}
\hoffset=-0.6truein \voffset=-0.1truein

\begin{document}

\newtheorem{remark}[theorem]{Remark}



\def \g {{\gamma}}
\def \G {{\Gamma}}
\def \l {{\lambda}}
\def \a {{\alpha}}
\def \b {{\beta}}
\def \f {{\phi}}
\def \r {{\rho}}
\def \R {{\mathbb R}}
\def \H {{\mathbb H}}
\def \N {{\mathbb N}}
\def \C {{\mathbb C}}
\def \Z {{\mathbb Z}}
\def \F {{\Phi}}
\def \Q {{\mathbb Q}}
\def \e {{\epsilon }}
\def \ev {{\vec\epsilon}}
\def \ov {{\vec{0}}}
\def \GinfmodG {{\Gamma_{\!\!\infty}\!\!\setminus\!\Gamma}}
\def \GmodH {{\Gamma\backslash\H}}
\def \sl  {{\hbox{SL}_2( {\mathbb R})} }
\def \psl  {{\hbox{PSL}_2( {\mathbb R})} }
\def \slz  {{\hbox{SL}_2( {\mathbb Z})} }
\def \pslz  {{\hbox{PSL}_2( {\mathbb Z})} }
\def \L  {{\hbox{L}^2}}

\newcommand{\norm}[1]{\left\lVert #1 \right\rVert}
\newcommand{\abs}[1]{\left\lvert #1 \right\rvert}
\newcommand{\modsym}[2]{\left \langle #1,#2 \right\rangle}
\newcommand{\inprod}[2]{\left \langle #1,#2 \right\rangle}
\newcommand{\Nz}[1]{\left\lVert #1 \right\rVert_z}
\newcommand{\tr}[1]{\operatorname{tr}\left( #1 \right)}

\title[Hyperbolic lattice-point counting]{Mean value results and $\Omega$-results for the hyperbolic lattice point problem in conjugacy classes}
\author{Dimitrios Chatzakos}
\address{Universit\'e de Lille 1 Sciences et Technologies
        and
        Centre Européen pour les Mathématiques, la Physique et leurs interactions (CEMPI),
        Cit\'e Scientifique, 59655 Villeneuve d’ Ascq C\'edex, France}
\email{Dimitrios.Chatzakos@math.univ-lille1.fr}
\thanks{The author was supported by a DTA from EPSRC during his PhD studies at UCL}
\date{\today}
\subjclass[2010]{Primary 11F72; Secondary 37C35, 37D40}

\begin{abstract}
For $\G$ a cofinite Fuchsian group, we study the lattice point problem in conjugacy classes on the Riemann surface $\GmodH$.
Let $\mathcal{H}$ be a hyperbolic  conjugacy class in $\Gamma$ and $\ell$ the $\mathcal{H}$-invariant closed geodesic on the surface.
The main asymptotic for the counting function of the orbit $\mathcal{H} \cdot z$ inside a circle of radius $t$ centered at $z$ grows like $c_{\mathcal{H}} e^{t/2}$. This problem is also related with counting distances of the orbit of $z$ from the geodesic $\ell$. For $X \sim e^{t/2}$ we study mean value and $\Omega$-results for the error term $e(\mathcal{H}, X ;z)$ of the counting function. We prove that a normalized version of the error $e(\mathcal{H}, X ;z)$ has finite mean value in the parameter $t$. Further, we prove that if $\Gamma$ is cocompact then
\begin{eqnarray*}
\int_{\ell} e(\mathcal{H}, X;z) d s(z) = \Omega \left( X^{1/2} \log \log \log X \right).
\end{eqnarray*}
For $\Gamma =  {\hbox{PSL}_2( {\mathbb Z})}$ we prove the same $\Omega$-result, using a subconvexity bound for the Epstein zeta function associated to an indefinite quadratic form in two variables. We also study pointwise $\Omega_{\pm}$-results for the error term. Our results extend the work of Phillips and Rudnick for the classical lattice problem to the conjugacy class problem. 

\end{abstract}
\maketitle
\section{Introduction}\label{Introduction}

\subsection{Mean value and $\Omega$-results for the classical hyperbolic lattice point problem} \label{subsectiononeone} Let $\mathbb{H}$ be the hyperbolic plane, $z$, $w$ two fixed points in $\mathbb{H}$ and $\rho(z,w)$ their hyperbolic distance. For $\Gamma$ a cocompact or cofinite Fuchsian group, the classical hyperbolic lattice point problem asks to estimate the quantity
\begin{displaymath}
N(X; z,w)= \# \left\{ \gamma \in \Gamma : \rho( z, \gamma w) \leq \cosh^{-1} \left(\frac{X}{2}\right) \right\},
\end{displaymath}
as $X \to \infty$. This problem has been extensively studied by many authors \cite{cham2, cherubinirisager, delsarte, good, gunther, hillparn, patterson, phirud, selberg}. One of the main methods to understand this problem is using the spectral theory of automorphic forms. For this reason, let $\Delta$ be the Laplacian of the hyperbolic surface $\GmodH$ and let $ \{u_j \}_{j=0}^{\infty}$ be the $\L$-normalized eigenfunctions (Maass forms) of $-\Delta$ with eigenvalues $ \{\lambda_j \}_{j=0}^{\infty}$. We also write $\lambda_j= s_j(1-s_j) =1/4 +t_j^2$. Selberg \cite{selberg}, G\"unther \cite{gunther}, Good \cite{good} et. al. proved that
\begin{equation} \label{mainformulaclassical}
N(X;z,w)  =  \sum_{1/2 < s_j \leq 1} \sqrt{\pi} \frac{\Gamma(s_j - 1/2)}{\Gamma(s_j + 1)} u_j(z) \overline{u_j(w)} X^{s_j} + E(X;z,w),
\end{equation}
where the error term $E(X;z,w)$ satisfies the bound
 \begin{eqnarray*}
 E(X;z,w) = O(X^{2/3}).
\end{eqnarray*}
Conjecturally, the optimal upper bound for the error term $E(X;z,w)$ is
\begin{equation} \label{conjecture1}
E(X; z,w) = O_{\epsilon}(X^{1/2 + \epsilon})
\end{equation}
 for every $\epsilon > 0$ (see \cite{patterson}, \cite{phirud}). This error term has a spectral expansion over all $\lambda_j \geq 1/4$.  The contribution of $\lambda_j=1/4$ is well understood. We subtract it from $E(X;z,w)$ and we define the refined error term $e(X;z,w)$ to be the difference
\begin{eqnarray*}
e(X;z,w) = E(X;z,w) - h(0) \sum_{t_j=0} u_j(z) \overline{u_j(w)}  =  E(X;z,w) + O ( X^{1/2} \log X ),
\end{eqnarray*}
where $h(t)$ is the Selberg/Harish-Chandra transform of the characteristic function $\chi_{[0, (X-2)/4]}$ (see \cite[p. 2]{chatz} for the details).
Thus, bound (\ref{conjecture1}) is equivalent with the bound
\begin{equation} \label{conjecture2}
e(X; z,w) = O_{\epsilon}(X^{1/2 + \epsilon})
\end{equation}
for every $\epsilon > 0$.
For $z=w$, Phillips and Rudnick proved mean value results and $\Omega$-results (i.e. lower bounds for the $\limsup |e(X;z,z)|$) that support conjecture (\ref{conjecture2}). For $\G$ cofinite but not cocompact, let $E_{\frak{a}}(z,s)$ be the nonholomorphic Eisenstein series corresponding to the cusp $\frak{a}$. Phillips and Rudnick \cite{phirud} proved the following theorems.
\begin{theorem} [Phillips-Rudnick \cite{phirud}]\label{philrudn1} (a) Let $\G$ be a cocompact group. Then:
 \begin{equation} \label{mnrt2philrud}
\lim_{T \to \infty} \frac{1}{T} \int_{0}^{T}  \frac{e(2 \cosh r;z,z)}{e^{r/2}} dr = 0.
\end{equation}
(b) Let $\G$ be a cofinite but not cocompact group. Then:
 \begin{equation} \label{mnrt3}
\lim_{T \to \infty} \frac{1}{T} \int_{0}^{T}  \frac{e(2 \cosh r;z,z)}{e^{r/2}} dr = \sum_{\frak{a}}\left| E_{\frak{a}} (z, 1/2) \right|^2.
\end{equation}
\end{theorem}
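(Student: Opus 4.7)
My plan is to expand $e(X;z,z)$ via the Selberg pre-trace formula and carry out the time average in $r$. Let $h_X$ denote the Selberg/Harish--Chandra transform of the indicator of the disc of radius $\cosh^{-1}(X/2)$, so that
\begin{equation*}
N(X;z,z) = \sum_{j\geq 0} h_X(t_j)\,|u_j(z)|^2 + \frac{1}{4\pi}\sum_{\mathfrak{a}}\int_{-\infty}^{\infty} h_X(t)\,|E_{\mathfrak{a}}(z,1/2+it)|^2\,dt.
\end{equation*}
Stationary-phase analysis of $h_X(t)$ for real $t$ gives the leading asymptotic
\begin{equation*}
h_X(t) = \sqrt{\pi}\,\frac{\Gamma(it)}{\Gamma(3/2+it)}X^{1/2+it} + \sqrt{\pi}\,\frac{\Gamma(-it)}{\Gamma(3/2-it)}X^{1/2-it} + (\text{main terms } X^{s_j},\ s_j>1/2) + (\text{lower order}).
\end{equation*}
Removing the main terms and the $\lambda_j = 1/4$ contribution as in the paper's definition of $e(X;z,z)$, substituting $X = 2\cosh r$ (so $X^{1/2\pm it} = e^{r/2}e^{\pm irt}(1+O(e^{-2r}))$), and dividing by $e^{r/2}$, I obtain an identity of the form
\begin{equation*}
\frac{e(2\cosh r;z,z)}{e^{r/2}} = \sum_{t_j>0} b_j(z)\bigl(e^{irt_j}+e^{-irt_j}\bigr) + \sum_{\mathfrak{a}} J_{\mathfrak{a}}(r) + o_z(1),
\end{equation*}
with $b_j(z)=\sqrt{\pi}\,\Gamma(it_j)|u_j(z)|^2/\Gamma(3/2+it_j)$ and $J_{\mathfrak{a}}(r)$ an oscillatory integral against $|E_{\mathfrak{a}}(z,1/2+it)|^2$.

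I would then apply $T^{-1}\int_0^T\cdot\,dr$ and take $T\to\infty$. For each discrete term, $T^{-1}\int_0^T e^{\pm irt_j}\,dr = O(1/(Tt_j))$; combining this with Weyl's law and the local bound $\sum_{t_j\leq T_0}|u_j(z)|^2 \ll T_0^2$ via a dyadic decomposition in $t_j$, the total discrete contribution tends to $0$. In the cocompact case there is no continuous spectrum, proving part (a). For (b), I would combine the $\pm it$ halves of $J_{\mathfrak{a}}(r)$ using $E_{\mathfrak{a}}(z,1/2-it)=\overline{E_{\mathfrak{a}}(z,1/2+it)}$; a Laurent expansion of $\Gamma(it)/\Gamma(3/2+it) = 2/(it\sqrt{\pi}) + O(1)$ near $0$, paired with its $t \mapsto -t$ twin, cancels the apparent pole and converts the leading piece of $J_{\mathfrak{a}}(r)$ into a Dirichlet-kernel convolution
\begin{equation*}
\frac{1}{\pi}\int_{-\infty}^{\infty}\frac{\sin(r t)}{t}\,|E_{\mathfrak{a}}(z,1/2+it)|^2\,dt.
\end{equation*}
Swapping the $r$- and $t$-integrals, the $r$-average of $\sin(rt)/t$ against a smooth function of $t$ acts as an approximate identity at $t=0$, producing $|E_{\mathfrak{a}}(z,1/2)|^2$ in the limit; summing over cusps gives (b).

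The main technical obstacle is the continuous-spectrum step in (b). Two points require care: first, the pole of $\Gamma(it)/\Gamma(3/2+it)$ at the origin is only removed after the symmetrization $t \leftrightarrow -t$, and one must verify rigorously that the resulting integrand is locally integrable and that the subleading Gamma-function remainders contribute a negligible $o(1)$ after time-averaging. Second, the passage to the limit in the Dirichlet-kernel integral demands polynomial bounds on $|E_{\mathfrak{a}}(z,1/2+it)|^2$ for large $|t|$ (locally uniform in $z$), which combined with the $1/t$-decay of the kernel permit dominated convergence and the evaluation at $t=0$. The discrete estimate, by comparison, is routine once Weyl's law and the standard eigenfunction sum-bound are in place.
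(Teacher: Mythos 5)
First, a framing point: the paper does not prove this theorem --- it is quoted from Phillips--Rudnick --- and the closest in-house argument is the proof of the conjugacy-class analogue, Theorem \ref{result1}, in Section \ref{section3}. Your architecture matches that standard proof: spectral expansion with the sharp-cutoff transform $h_X(t)=\sqrt{\pi}\,\Gamma(it)/\Gamma(3/2+it)\,X^{1/2+it}+\mathrm{c.c.}+\cdots$, time-averaging in $r$ to kill the discrete oscillations via local Weyl's law, and extraction of $\sum_{\frak{a}}|E_{\frak{a}}(z,1/2)|^2$ from the $t=0$ behaviour of the continuous spectrum. Where the paper's analogue extracts the main term by a contour integral picking up the residue of $\Gamma(iz)$ at $z=0$ and disposes of the remainder $\phi_{\mathcal{H},\frak{a}}$ by Riemann--Lebesgue, you use the equivalent Fej\'er/Dirichlet-kernel computation; that is a legitimate alternative and the constant comes out right.

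Two points in your write-up need repair. (i) You assert a pointwise identity for $e(2\cosh r;z,z)/e^{r/2}$ and then integrate term by term, but the spectral expansion of $e(X;z,z)$ is not absolutely convergent: by local Weyl's law $\sum_{t_j\le T_0}|u_j(z)|^2\asymp T_0^2$ while $|h_X(t_j)|\asymp X^{1/2}(1+|t_j|)^{-3/2}$, so $\sum_j |h_X(t_j)|\,|u_j(z)|^2$ diverges (and likewise for the Eisenstein integral). You must define the $r$-averaged quantity first and justify that the transform commutes with the $r$-integration, exactly as the paper does for $M_{\mathcal{H}}(T)$ in (\ref{integratederror}). (ii) Your justification of the continuous-spectrum limit is incorrect as stated: a pointwise polynomial bound on $|E_{\frak{a}}(z,1/2+it)|^2$ against a kernel decaying like $1/|t|$ does not produce an integrable dominating function, so dominated convergence does not apply in that form. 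What actually saves the argument is that after the $T$-average the kernel decays like $\min\{1,(T|t|)^{-1}\}(1+|t|)^{-3/2}$, and the tail is controlled by the \emph{averaged} bound $\int_{-T_0}^{T_0}|E_{\frak{a}}(z,1/2+it)|^2\,dt\ll T_0^2$ from local Weyl's law via a dyadic decomposition --- this is precisely how the paper obtains the $O(T^{-1})$ bound for the $\phi_{\mathcal{H},\frak{a}}$ term. With these two fixes your proof goes through.
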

\begin{theorem} [Phillips-Rudnick \cite{phirud}]\label{philrudn2} (a) If $\G$ is cocompact or a subgroup of finite index in $\pslz$, then for all $\delta >0$, 
 \begin{displaymath}
e(X;z,z) = \Omega \left(X^{1/2} (\log \log X)^{1/4 - \delta} \right).
\end{displaymath}
(b) If $\G$ is cofinite but not cocompact, and either has some eigenvalues $\lambda_j >1/4$ or some cusp $\mathfrak{a}$ with $E_{\frak{a}} (z, 1/2) \neq 0$, then, 
 \begin{displaymath}
e(X;z,z) = \Omega \left(X^{1/2} \right).
\end{displaymath}
(c) If any other cofinite case, for all $\delta >0$, 
 \begin{displaymath}
e(X;z,z) = \Omega \left(X^{1/2-\delta} \right).
\end{displaymath}
\end{theorem}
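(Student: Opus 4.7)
The approach I would take is to base everything on the spectral expansion of $e(X;z,z)$, which, after the substitution $X=2\cosh r$ and use of the large-$t_j$ asymptotics of the Selberg/Harish-Chandra transform, takes the schematic form
\[
 e(2\cosh r;z,z) = e^{r/2}\sum_{t_j>0}\bigl(a_j(z) e^{i t_j r}+\overline{a_j(z)}e^{-i t_j r}\bigr) + I_{\mathrm{cont}}(r) + (\text{lower order}),
\]
with coefficients $a_j(z) \asymp |u_j(z)|^2 t_j^{-3/2}$ (up to an explicit Gamma factor) and $I_{\mathrm{cont}}$ a corresponding Eisenstein integral in the non-cocompact case. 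Setting $f(r) := e^{-r/2} e(2\cosh r;z,z)$, the three parts amount to pointwise lower bounds on $|f(r)|$ along a sequence $r \to \infty$.

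Parts (b) and (c) should follow by isolating a single oscillation. In case (b), either a cuspidal $t_j>0$ yields a term $a_j(z) e^{i t_j r}$ whose amplitude is already a positive constant, forcing $f(r)=\Omega(1)$, or else the nonvanishing of some $E_{\frak{a}}(z,1/2)$ combined with Theorem \ref{philrudn1}(b) gives $\lim_{T\to\infty}T^{-1}\int_0^T |f(r)|^2\,dr>0$; either way one obtains $e(X;z,z)=\Omega(X^{1/2})$. Case (c) is the residual one, where neither mechanism is available. In that case I would still extract an $\Omega(X^{1/2-\delta})$ by moving slightly off the critical line, i.e.\ exploiting the contribution of a chosen spectral parameter with small positive real part and absorbing the damping into the $\delta$-loss.

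The heart of the argument is (a), whose strategy is a Dirichlet pigeonhole on the frequencies: for any $N$ and any $\e>0$, there exists $r_0$ with $\|t_j r_0/(2\pi)\|<\e$ simultaneously for $j=1,\dots,N$, and one can take $\log r_0 \lesssim N\log(1/\e)$. After a real shift to absorb the intrinsic phases $\arg a_j(z)$, evaluation at $r_0$ produces
\[
 |f(r_0)| \gtrsim \sum_{j\le N}|a_j(z)| - O\!\Bigl(\e\!\!\sum_{j\le N}\!\!t_j|a_j(z)|\Bigr) - (\text{tail over }t_j>t_N),
\]
where the tail is controlled in mean square via a truncated Parseval identity. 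For $\G$ cocompact or of finite index in $\pslz$, this is where the sharp spectral input enters: the bound $\sum_{t_j\le T}|u_j(z)|^4 \ll T^{2+\e}$ (Rankin--Selberg convolution, or the spectral large sieve of Iwaniec) combined with Weyl's law yields $\sum_{t_j\le T}|a_j(z)|^2 \gg \log T$, which provides a usable lower bound on $\sum_{j\le N}|a_j(z)|$ via Cauchy--Schwarz. The announced exponent $(\log\log X)^{1/4-\delta}$ is then obtained by optimising $N$ and $\e$ under the constraint $\log X \approx N\log(1/\e)$ while balancing the alignment error $\e\sum t_j|a_j|$ against the tail contribution. This triple balance is the main technical obstacle: a naive pigeonhole without the sharp $L^4$-type spectral input produces a noticeably weaker exponent, and the $1/4$ is the limit of what the pigeonhole + Parseval combination yields in this setting.
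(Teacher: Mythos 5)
This statement is not proved in the paper at all: it is Phillips--Rudnick's theorem, quoted verbatim from \cite{phirud} as background, so there is no in-paper proof to compare against. The closest thing the paper offers is its own adaptation of the Phillips--Rudnick machinery to the conjugacy-class setting (Sections \ref{section4}--\ref{section5}: convolution with $\psi_{\epsilon}$, Dirichlet's box principle, fixed-sign Gamma factors, local Weyl law), and measuring your proposal against that mechanism, the skeleton for part (a) is right but two of your key quantitative inputs are wrong. First, the spectral input is not a fourth-moment bound $\sum_{t_j\le T}|u_j(z)|^4\ll T^{2+\epsilon}$: an \emph{upper} bound on $\sum|a_j|^2$ cannot produce a \emph{lower} bound on $\sum_{j\le N}|a_j|$ by Cauchy--Schwarz (the inequality goes the wrong way). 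What is actually used is the local Weyl law $\sum_{t_j<T}|u_j(z)|^2\sim cT^2$ (Theorem \ref{localweylslaw}), which by partial summation gives $\sum_{t_j<A}|u_j(z)|^2\,t_j^{-3/2}\gg A^{1/2}$ directly. Second, your proposal never uses the hypothesis ``cocompact or of finite index in $\pslz$'', yet that hypothesis is exactly what makes the pigeonhole step work: Dirichlet's box principle costs $\log R\ll n\log T$ where $n$ is the number of eigenvalues with $t_j\le A$, and one needs Weyl's law $n\asymp A^2$ for the \emph{discrete} spectrum — valid in those two cases, false for a general cofinite group, whose cuspidal spectrum may even be finite. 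Since $R\asymp\log X$, the correct balance is $\log\log X\asymp A^2\log T$ (not $\log X\approx N\log(1/\epsilon)$ as you wrote — you are off by one logarithm), and the main term $\asymp A^{1/2}\asymp n^{1/4}$ is precisely where the exponent $1/4$ comes from. With your constraint the optimisation would output the wrong power.

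Two smaller points. In part (b) you cannot simply ``isolate a single oscillation'' by evaluating the series termwise: the spectral expansion is only conditionally convergent, and the correct device is again the mollified/almost-periodic version, in which the convolved error has a nonvanishing Fourier coefficient at the chosen frequency $t_j$ and therefore cannot be $o(1)$; also Theorem \ref{philrudn1}(b) gives a nonzero \emph{first} moment $\lim_T T^{-1}\int_0^T f(r)\,dr=\sum_{\mathfrak{a}}|E_{\mathfrak{a}}(z,1/2)|^2$, not a second moment as you state (either suffices, but you are quoting the wrong statement). In part (c), ``moving slightly off the critical line and absorbing the damping into the $\delta$-loss'' is not an argument: there is no spectral parameter off the critical line to exploit in that case. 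Phillips--Rudnick handle (c) through the continuous spectrum, using that the local Weyl law forces $\int_{-T}^{T}|E_{\mathfrak{a}}(z,1/2+it)|^2\,dt\gg T^2$ even when $E_{\mathfrak{a}}(z,1/2)=0$, so some Eisenstein contribution at small nonzero frequency survives at the cost of an $X^{-\delta}$ factor. As written, parts (a) and (c) of your proposal have genuine gaps.
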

In the proof of Theorem \ref{philrudn2}, the assumption $z=w$ is essential. In \cite{chatz}, we studied $\Omega$-results for the average
\begin{equation} \label{mtzdefinition}
M(X;z,w) =   \frac{1}{X} \int_{2}^{X}  \frac{e(x;z,w)}{x^{1/2}} d x 
\end{equation}
for two different points $z,w$. We proved that, if $\lambda_1>2.7823...$ and $z,w$ are sufficiently close to each other, the limit of $M(X;z,w)$ as $X \to \infty$ does not exist. In many cases, these results imply pointwise $\Omega$-results for $e(X;z,w)$ with $z \neq w$ as immediate corollaries.

 There are specific groups $\Gamma$ for which we can provide refined $\Omega$-results. In \cite{chatz}, we proved that if $\Gamma$ is a cofinite group with sufficiently many cusp forms at the point $z$ in the sense that the series
\begin{displaymath}
\sum_{|t_j| < T} |u_j(z)|^2 \gg T^{2}
\end{displaymath}
and satisfies $E_{\frak{a}}(z,1/2) \neq 0$ for some cusp $\frak{a}$ then
\begin{eqnarray*}
e(X;z,w) = \Omega_{\pm} (X^{1/2})
\end{eqnarray*}
for $z$ fixed and $w$ sufficiently close to $z$ (see Corollary 1.9 in \cite{chatz}).

\subsection{The conjugacy class problem} \label{subsectiononetwo}In this paper we are interested in studying mean value results and $\Omega$-results for the hyperbolic lattice point problem in conjugacy classes. In this problem we restrict the action of $\G$ in a hyperbolic conjugacy class $\mathcal{H} \subset \G$; that means $\mathcal{H}$ is the conjugacy class of a hyperbolic element of $\G$. Let $z \in \mathbb{H}$ be a fixed point. The problem asks to estimate the asymptotic behavior of the quantity
\begin{eqnarray*}
N_z(t)  = \# \{ \gamma \in \mathcal{H} : \rho( z, \gamma z) \leq t \},
\end{eqnarray*}
as $t \to \infty$. This problem was first studied by Huber in \cite{huber1, huber2}. The main reason we are interested in this problem is because it is related with counting distances of points in the orbit of the fixed point $z$ from a closed geodesic. This geometric interpretation was first explained by Huber in \cite{huber1} and later in \cite{huber2}. Assume $\mathcal{H}$ is the conjugacy class of the hyperbolic element $g^{\nu}$ with $g$ primitive and $\nu \in \N$. Let also $\ell$ be the invariant closed geodesic of $g$. Then $N_z(t)$ counts the number of $\gamma \in  \langle g \rangle \backslash \Gamma  $ such that $\rho(\gamma z, \ell) \leq t$. Equivalently, assume that $\ell$ lie on $\{yi, y>0\}$ (after conjugation). Let $\mu = \mu(\ell)$ be the length of $\ell$ and let $X$ be given by the change of variable
\begin{equation} \label{changeofvariable}
X = \frac{\sinh(t/2)}{\sinh(\mu/2)} \sim c_{\mathcal{H}} \cdot e^{t/2}.
\end{equation}
Huber's interpretation shows that $N_z(t)$ actually counts $\gamma \in \langle g \rangle \backslash \Gamma$ such that $\cos v \geq X^{-1}$, where $v$ is the angle defined by the ray from $0$ to $\gamma z$ and the geodesic $\{yi, y>0\}$.

Under parametrization (\ref{changeofvariable}) denote $N_z(t)$ by $N(\mathcal{H}, X; z )$. Thus we have
\begin{displaymath}
N(\mathcal{H}, X; z) = \# \left\{\gamma \in \mathcal{H} : \frac{\sinh(\rho(z,\gamma z)  /2)}{\sinh(\mu/2)}\leq X \right\}.\end{displaymath}
The conjugacy class problem holds also a main formula similar to formula (\ref{mainformulaclassical}), which can be proved using the spectral theorem for $\L(\GmodH)$. This formula was first derived by Good in \cite{good}; it can also be written in the following explicit form, see \cite{chatzpetr}.
\begin{theorem}[Good \cite{good}, Chatzakos-Petridis \cite{chatzpetr}] \label{mainformulaconjugacy} Let $\Gamma$ be a cofinite Fuchsian group and $\mathcal{H}$ a hyperbolic conjugacy class of $\G$. Then:
\begin{eqnarray*}
N(\mathcal{H}, X;z) &=& \sum_{1/2 < s_j \leq 1}  A(s_j) \hat{u}_j u_j(z) X^{s_j} +  E(\mathcal{H}, X;z),
\end{eqnarray*}
where $A(s)$ is the product:
\begin{equation}\label{a-function} A(s) = 2^{s} \cos \left(\frac{\pi (s-1) }{2} \right) \frac{\Gamma \left(\frac{s+1}{2} \right) \Gamma \left(1 - \frac{s}{2} \right) \Gamma \left(s-\frac{1}{2} \right)}{ {\pi}\Gamma(s+1)},
\end{equation}
\begin{equation} \label{periodintegral}
\hat{u}_j = \int_{\sigma} \overline{u}_j(z) ds(z)
\end{equation} 
is the period integral of $\overline{u}_j$ along a segment $\sigma$ of the invariant closed geodesic of $\mathcal{H}$ with length $\int_{\sigma} ds(z) = \mu / \nu$ and
 \begin{displaymath}
 E(\mathcal{H}, X;z) = O(X^{2/3}).
\end{displaymath}
\end{theorem}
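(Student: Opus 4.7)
The plan is to follow the spectral approach of Selberg, adapted from the classical lattice point problem by replacing the point-pair invariant kernel with one depending on the distance to the axis $\ell$. First I would carry out a geometric reduction: writing $\mathcal{H} = \{\sigma g^\nu \sigma^{-1} : [\sigma] \in \langle g \rangle \backslash \Gamma\}$, where $g$ is the primitive element and $\langle g \rangle$ is the centralizer of $g^\nu$, and invoking the hyperbolic identity
\begin{equation*}
\sinh\bigl(\rho(z, \sigma g^\nu \sigma^{-1} z)/2 \bigr) = \sinh(\mu/2)\,\cosh \rho(\sigma^{-1} z, \ell),
\end{equation*}
so that $N(\mathcal{H}, X; z)$ equals the number of $[\sigma] \in \langle g \rangle \backslash \Gamma$ satisfying $\cosh \rho(\sigma^{-1} z, \ell) \leq X$. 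This is exactly Huber's interpretation spelled out above.

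Next I would package the counting function as the value of an automorphic series. For a test function $f_X$ on $[1,\infty)$, set
\begin{equation*}
\Phi_X(z) = \sum_{[\sigma] \in \langle g \rangle \backslash \Gamma} f_X\bigl( \cosh \rho(\sigma^{-1} z, \ell) \bigr),
\end{equation*}
which descends to a function on $\GmodH$ and equals $N(\mathcal{H}, X; z)$ when $f_X = \chi_{[1,X]}$. Expand $\Phi_X$ in Maass forms (and Eisenstein series in the cofinite case). The inner product against $u_j$ unfolds:
\begin{equation*}
\inprod{\Phi_X}{u_j} = \int_{\langle g \rangle \backslash \mathbb{H}} f_X(\cosh \rho(w, \ell)) \overline{u_j(w)} \, d\mu(w).
\end{equation*}
Using Fermi coordinates $(s, r)$ along $\ell$, where $s$ is arc length on $\ell$ and $r$ is signed perpendicular distance, a fundamental domain for $\langle g \rangle$ is the strip $0 \leq s < \mu/\nu$, $r \in \mathbb{R}$. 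Fourier-expanding $u_j$ along the periodic variable $s$, only the zero mode survives, and the zero mode along $\ell$ is precisely the normalized period integral $\hat{u}_j$ in \eqref{periodintegral}. This isolates
\begin{equation*}
\inprod{\Phi_X}{u_j} = \hat{u}_j \cdot \mathcal{I}_{s_j}(X),
\end{equation*}
for a transverse integral $\mathcal{I}_{s_j}(X)$ against the conical (Legendre) function governing the $k = 0$ mode.

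The main term is extracted in the final step by evaluating $\mathcal{I}_{s_j}(X)$ for $1/2 < s_j \leq 1$, where the resulting integral of Legendre-type functions against $\chi_{[1,X]}$ is dominated by its endpoint contribution and equals $A(s_j) X^{s_j} + O(X^{1/2})$, with $A(s_j)$ the product in \eqref{a-function}; the Gamma factors and the $\cos(\pi(s-1)/2)$ all arise from reflection/duplication formulas in standard Mellin-Barnes evaluations of these Legendre integrals. For $\mathrm{Re}(s_j) = 1/2$ (and the continuous spectrum) one has to sum the contributions; I would do this by smoothing $\chi_{[1,X]}$ on a scale $Y$, controlling the geometric error by a count of lattice points in a thin tube around $\ell$ of width $Y/X$, and optimizing $Y$ together with a spectral cutoff (using Weyl's law for the number of eigenvalues in a dyadic window) to reach the $O(X^{2/3})$ bound.

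The main obstacle is the explicit identification of $A(s_j)$ in the Step 4 evaluation: one must produce the precise Legendre-function integral corresponding to the axis-transverse direction, identify its $k=0$ Fourier mode, and match the resulting Mellin transform to the compact form in \eqref{a-function}. The secondary difficulty is the uniform control of the continuous spectrum contribution in the cofinite-but-not-cocompact case, which requires standard but nontrivial bounds on Eisenstein period integrals along closed geodesics; and a Selberg-type control of lattice points in tubular neighborhoods of $\ell$ to keep the geometric smoothing error at the right scale.
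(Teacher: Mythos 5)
Your proposal is essentially the approach of the cited proof (Huber/Good and Chatzakos--Petridis), which the present paper does not reprove but whose machinery it reproduces in Section \ref{spectraltheoryoftheconjugacyclassproblem}: the unfolding over $\langle g\rangle\backslash\Gamma$ in Fermi coordinates is exactly how the Huber transform $d(f,t)$ and the periods $\hat{u}_j$ arise in the expansion $A(f_X)(z)=\sum_j 2d(f_X,t_j)\hat{u}_ju_j(z)$, your transverse Legendre integral is the content of Lemma \ref{lemmacoefficentsconjugacy}(a) giving $2d(f_X,t)=A(s)X^s+O((s-1/2)^{-1}X^{1-s})$, and the majorant/minorant smoothing with local Weyl's law yields the $O(X^{2/3})$ bound. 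No substantive divergence from the source argument.
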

Notice that Theorem \ref{mainformulaconjugacy} implies the main asymptotic of $N(\mathcal{H}, X;z)$ is
$$N(\mathcal{H}, X;z) \sim \frac{2}{\vol (\GmodH)} \frac{\mu}{\nu} X.$$
Once again we are interested in the growth of the error term. The similarities that arise between the two problems suggest that we should expect the bound
\begin{eqnarray}\label{conjecture3}
E(\mathcal{H}, X;z) = O_{\epsilon} (X^{1/2+\epsilon}).
\end{eqnarray}
(see \cite[Conjecture~5.7]{chatzpetr}). As in the classical problem, the error term $E(\mathcal{H}, X;z)$ has a {\lq spectral expansion\rq} over the eigenvalues $\lambda_j \geq 1/4$. We subtract the contribution of the eigenvalue $\lambda_j=1/4$ and we denote the expansion over the eigenvalues $\lambda_j>1/4$ by $e(\mathcal{H}, X;z)$ (eq. (\ref{smalleerror})). In section \ref{spectraltheoryoftheconjugacyclassproblem} we prove that the bound (\ref{conjecture3}) is equivalent with the bound
 \begin{equation} \label{conjecture4}
e(\mathcal{H}, X;z) = O_{\epsilon} (X^{1/2+\epsilon}).
\end{equation}
In order to state our first result, we will need the following definition. 
\begin{definition} \label{definitioneisensteinperiods}
The Eisenstein period associated to the hyperbolic conjugacy class $\mathcal{H}$ is the period integral
\begin{equation} 
\hat{E}_{\mathfrak{a}} (1/2+ it)=\int_{\sigma} E_{\mathfrak{a}} (z, 1/2- it) ds(z),
\end{equation}
across a segment $\sigma$ of the invariant geodesic $\ell$ with length $\int_{\sigma} ds(z) = \mu / \nu$.
\end{definition} 
In section \ref{section3} we prove that the error term $e(\mathcal{H}, X;z)$ has finite mean value in the radial parameter $t$.
\begin{theorem} \label{result1} Let $\Gamma$ be a cofinite Fuchsian group.
\\
(a) If $\G$ is cocompact, then
 \begin{equation} \label{mnrt1}
\lim_{T \to \infty} \frac{1}{T} \int_{0}^{T}   \frac{e \left(\mathcal{H}, e^r ; z\right)}{e^{r/2}}  d r =0.
\end{equation} 
\\(b) If $\G$ is cofinite but not cocompact, then
 \begin{equation} \label{mnrt2}
\lim_{T \to \infty} \frac{1}{T} \int_{0}^{T}  \frac{e \left(\mathcal{H}, e^r ; z\right)}{e^{r/2}} d r = \frac{|\Gamma(3/4)|^2}{\pi^{3/2}} \sum_{\mathfrak{a}} \hat{E}_{\mathfrak{a}} (1/2) E_{\mathfrak{a}} (z, 1/2).
\end{equation}
\end{theorem}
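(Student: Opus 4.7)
The plan is to follow the strategy used by Phillips and Rudnick in Theorem \ref{philrudn1}, adapted to the conjugacy class setting via the spectral expansion of $e(\mathcal{H}, X;z)$ that is developed in Section \ref{spectraltheoryoftheconjugacyclassproblem}. I expect this expansion to take the form
$$e(\mathcal{H}, X;z) = \sum_{t_j > 0} A(s_j)\, \hat{u}_j\, u_j(z)\, X^{s_j} + \frac{1}{4\pi}\sum_{\mathfrak{a}} \int_{-\infty}^{\infty} A(1/2+it)\, \hat{E}_{\mathfrak{a}}(1/2+it)\, E_{\mathfrak{a}}(z, 1/2-it)\, X^{1/2+it}\, dt,$$
with $A(s)$ as in \eqref{a-function} and the continuous (second) term absent when $\Gamma$ is cocompact. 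Setting $X = e^r$ turns $X^{s_j}/X^{1/2}$ into $e^{it_j r}$ and $X^{1/2+it}/X^{1/2}$ into $e^{itr}$, so the task reduces to exchanging $\frac{1}{T}\int_0^T(\cdot)\, dr$ with the spectral sum/integral and evaluating each piece.

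For the discrete spectrum, each individual term gives
$$A(s_j)\, \hat{u}_j\, u_j(z) \cdot \frac{e^{i t_j T} - 1}{i t_j T} = O(1/T) \to 0.$$
To justify passing the $T$-limit under the sum, Stirling applied to \eqref{a-function} gives $|A(s_j)| \ll t_j^{-3/2}$, which, combined with the local Weyl law for $|u_j(z)|^2$ and Good--Zelditch-type bounds for the geodesic periods $|\hat{u}_j|$, produces an absolutely convergent majorant. Dominated convergence then yields limit zero, establishing part (a).

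For part (b), the nonzero mean value is produced by the simple pole of $A(s)$ at $s = 1/2$, arising from the factor $\Gamma(s - 1/2)$ in \eqref{a-function}. A direct residue computation gives $\mathrm{Res}_{s=1/2} A(s) = 2\Gamma(3/4)^2/\pi^{3/2}$. Decompose $A(1/2+it) = c/(it) + \tilde{A}(t)$ with $c = \mathrm{Res}_{s=1/2} A(s)$ and $\tilde{A}$ smooth at $t=0$ and Stirling-decaying at infinity. The $\tilde{A}$-contribution vanishes in the mean by dominated convergence, since the Fej\'er-type kernel $(e^{itT}-1)/(itT)$ is bounded by $\min(1, 2/|tT|)$ and tends pointwise to $0$ for $t \neq 0$. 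For the polar part, the change of variables $u = tT$ concentrates the mass at $t=0$; the classical identity $\mathrm{p.v.}\int_{-\infty}^{\infty}(e^{iu}-1)/u^2\, du = -\pi$ then extracts the value of $\hat{E}_{\mathfrak{a}}(1/2+it)\, E_{\mathfrak{a}}(z,1/2-it)$ at $t=0$, which after consolidating the constants (the $1/(4\pi)$ from the Eisenstein spectral measure and the residue $c$) produces the claimed coefficient $\Gamma(3/4)^2/\pi^{3/2}$.

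The main obstacle is controlling the discrete spectral sum uniformly in $T$: the weights $A(s_j)\,\hat{u}_j\, u_j(z)$ are not \emph{a priori} absolutely summable, so one must combine Stirling decay of $A(s_j)$ with Cauchy--Schwarz applied to the mean squares $\sum_{|t_j|\leq T}|\hat{u}_j|^2$ and $\sum_{|t_j|\leq T}|u_j(z)|^2$, possibly paired with Abel summation or a preliminary smoothing of the characteristic function defining $N(\mathcal{H}, X; z)$ so that the resulting spectral coefficients have enough decay to legitimize the term-by-term averaging.
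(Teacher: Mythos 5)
Your overall strategy --- spectral expansion, averaging the oscillating factors $e^{it_jr}$, and extracting the mean value in part (b) from the pole of $A(s)$ at $s=1/2$ --- is the same as the paper's, and your residue computation $\mathrm{Res}_{s=1/2}A(s)=2\Gamma(3/4)^2/\pi^{3/2}$ is correct. But the central analytic step of part (a) has a genuine gap. First, the decay rate you quote is wrong: on the critical line Stirling gives $|A(1/2+it)|\asymp (1+|t|)^{-1}$ (this is exactly (\ref{asymptot}), since $G(t)\Gamma(it)=2\cdot 2^{-it}A(1/2+it)$), not $|t|^{-3/2}$. Second, and more seriously, there is no absolutely convergent majorant, so dominated convergence cannot be invoked: with only the mean-square information $\sum_{|t_j|<T}|\hat u_j|^2\sim cT$ (Theorem \ref{localweylslawperiods2}) and $\sum_{|t_j|<T}|u_j(z)|^2\sim cT^2$ (Theorem \ref{localweylslaw}), Cauchy--Schwarz bounds the natural majorant $\sum_{t_j<T}t_j^{-1}|\hat u_j|\,|u_j(z)|$ only by $T^{1/2}(\log T)^{1/2}$, which is unbounded; even your claimed $t_j^{-3/2}$ decay would not make it summable. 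You correctly flag this as ``the main obstacle,'' but the fixes you gesture at are not carried out, and without them the termwise passage to the limit is unjustified. The paper's resolution is the point you are missing: the Huber transform commutes with the radial average, so $M_{\mathcal H}(T)=\frac1T\int_0^T e(\mathcal H,x;z)x^{-1/2}\,dr$ is itself (up to main terms) the spectral expansion of a smoother test function, whose coefficients carry the extra factor $\frac1T\int_0^Te^{it_jr}\,dr=O((t_jT)^{-1})$ \emph{inside} the sum. The resulting total decay $t_j^{-2}$ makes that series absolutely convergent (Cauchy--Schwarz with weights $t_j^{-2+2\delta}$ and $t_j^{-2-2\delta}$ against the two Weyl laws) and of size $O(T^{-1})$, so no limit is ever interchanged with a divergent series.

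For part (b), your Fej\'er-kernel/principal-value evaluation of the polar part is a legitimate alternative to the paper's contour-integral argument (Lemma \ref{hubertransformconvergence}) and produces the same constant, \emph{provided} you use the correct real kernel $2\Re\bigl(A(1/2+it)X^{1/2+it}\bigr)$ coming from Lemma \ref{lemmacoefficentsconjugacy}(b) rather than $A(1/2+it)X^{1/2+it}$; as written your expansion drops the conjugate term and your polar computation comes out a factor of $2$ short of the stated coefficient $|\Gamma(3/4)|^2/\pi^{3/2}$. The same integrability problem as in part (a) also recurs here: with $|\tilde A(t)|\asymp|t|^{-1}$ the function $\tilde A(t)\hat E_{\mathfrak a}(1/2+it)E_{\mathfrak a}(z,1/2-it)$ is not known to lie in $L^1(\mathbb R)$, so your dominated-convergence step for the $\tilde A$-contribution again needs the extra $1/(tT)$ supplied by the average --- this is precisely how the paper treats the term $\phi_{\mathcal H,\mathfrak a}(t)$, using $\phi_{\mathcal H,\mathfrak a}(0)=0$ to cancel the singularity at $t=0$.
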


\begin{remark} Using the change of variables (\ref{changeofvariable}) we see that Theorem \ref{result1} is indeed a mean value result in the radial parameter $t \sim 2r + \mu -2 \log 2$.  (where the parameter $t$ counts the distance between the closed geodesic of $\mathcal{H}$ and the orbit of $z$).
\end{remark}  

For the conjugacy class problem, proving pointwise $\Omega$-results is a more subtle problem comparing to the classical one, due to the appearance of the period integrals in the spectral expansion of $e(\mathcal{H}, X;z)$. 
In the proof of Theorem \ref{philrudn2}, Phillips and Rudnick choose $z=w$ so that the series expansion of the error term $e(X;z,w)$ contains the expressions $|u_j(z)|^2$ which are nonnegative. In this setting, the natural choice is to average over the $\mathcal{H}$-invariant geodesic $\ell$. For this reason, we will need the following result of Good and Tsuzuki which describes the exact asymptotic behaviour of the period integrals.
\\
\begin{theorem}[Good \cite{good}, Tsuzuki \cite{tsuzuki}] \label{localweylslawperiods2}
The period integrals $\hat{u}_j$ of Maass forms and $\hat{E}_{\mathfrak{a}} (1/2 + it)$ of Eisenstein series satisfy the asymptotic
\begin{eqnarray*}
 \sum_{|t_j| < T} |\hat{u}_j|^2   + \sum_{\frak{a}} \frac{1}{4 \pi} \int_{-T}^{T} | \hat{E}_{\mathfrak{a}} (1/2 + it)|^2 dt \sim \frac{\mu(\ell)}{\pi} \cdot T,
\end{eqnarray*}
where $\mu(\ell)$ denotes the length of the invariant closed geodesic $\ell$.
\end{theorem}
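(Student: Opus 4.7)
The plan is to establish this asymptotic as a \emph{local Weyl law attached to the closed geodesic} $\ell$, by integrating the Selberg pre-trace formula twice along $\ell$. Pick a radial test function $k$ whose Selberg--Harish-Chandra transform $h$ is a non-negative smooth approximation to $\chi_{[-T, T]}$. For the automorphic kernel $K(z,w) = \sum_{\gamma \in \Gamma} k(u(z, \gamma w))$ the spectral theorem yields
\begin{displaymath}
K(z,w) = \sum_j h(t_j) u_j(z) \overline{u_j(w)} + \sum_{\mathfrak{a}} \frac{1}{4\pi} \int_{-\infty}^{\infty} h(t)\, E_{\mathfrak{a}}(z, 1/2 + it) \overline{E_{\mathfrak{a}}(w, 1/2 + it)} \, dt.
\end{displaymath}
Integrating this identity against $ds(z) \otimes ds(w)$ over $\sigma \times \sigma$, and using $\overline{E_{\mathfrak{a}}(z, 1/2 + it)} = E_{\mathfrak{a}}(z, 1/2 - it)$ on the critical line together with the very definitions of $\hat{u}_j$ and $\hat{E}_{\mathfrak{a}}(1/2 + it)$, the spectral side collapses to
\begin{displaymath}
\mathcal{S}(h) := \sum_j h(t_j) |\hat{u}_j|^2 + \sum_{\mathfrak{a}} \frac{1}{4\pi} \int_{-\infty}^{\infty} h(t)\, |\hat{E}_{\mathfrak{a}}(1/2 + it)|^2 \, dt.
\end{displaymath}

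The geometric side is reorganised via the double cosets of $\Gamma$ modulo the axis stabiliser $\langle g \rangle$. The axis-stabilising cosets ($\gamma \ell = \ell$) furnish the main term: since $u(\cdot, \cdot)$ is translation-invariant along the axis, the unfolding of the double integral reduces to a single integral of $k$ along $\ell$, which by the Abel/Harish-Chandra inversion equals $\mu(\ell) \cdot g(0) = \tfrac{\mu(\ell)}{2\pi} \int_{-\infty}^{\infty} h(t) \, dt$. For $h \approx \chi_{[-T, T]}$ this yields the prediction $\mu(\ell)\, T / \pi$. The remaining (off-axis) double cosets correspond to distinct $\Gamma$-translates of the axis; their contributions are controlled by the standard hyperbolic geodesic-distance counting estimate $\#\{\gamma \in \langle g \rangle \backslash \Gamma/ \langle g \rangle : \rho(\ell, \gamma \ell) \leq R\} = O(e^R)$, combined with the decay of the Abel transform of $h$ at large hyperbolic distances, which together produce a total off-axis contribution of size $o(T)$.

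A standard smoothing/Tauberian approximation argument then lets one pass from the smooth $h$ to the sharp cut-off $\chi_{[-T, T]}$, yielding the stated asymptotic. The principal technical obstacle is the uniform handling of the Eisenstein contribution and of the off-axis cosets when $\Gamma$ is non-cocompact: one needs decay estimates for the Abel transform of $h$ tailored to tubular neighbourhoods of $\ell$, and one must match the mean-square growth of the continuous Eisenstein periods $\hat{E}_{\mathfrak{a}}(1/2 + it)$ on vertical lines against the corresponding piece of the geometric side. Supplying these uniform bounds---together with the careful separation of the hyperbolic, parabolic and elliptic conjugacy contributions on the geometric side---is the analytic heart of the arguments of Good and Tsuzuki.
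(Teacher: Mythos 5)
The paper offers no proof of Theorem \ref{localweylslawperiods2}: it is imported wholesale as a result of Good and Tsuzuki, with the reader sent to \cite{martin} for its history. So there is nothing internal to compare your argument against; what you have written is a reconstruction of the relative trace formula proof that underlies those references, and as a strategy it is the right one --- integrate the pre-trace formula over $\sigma\times\sigma$, identify the spectral side with $\sum_j h(t_j)|\hat{u}_j|^2$ plus the corresponding Eisenstein integral, and extract the main term from the $\langle g\rangle$-cosets on the geometric side.

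Two steps in your sketch need repair before this could stand as a proof. First, the unfolded axis contribution is not literally $\mu(\ell)\,g(0)$. Placing $\ell$ on the imaginary axis and unfolding, the diagonal term produces $\mu(\ell)\cdot 2\int_0^\infty k(v)\,(v(1+v))^{-1/2}\,dv$, whereas $g(0)=2\int_0^\infty k(v)\,v^{-1/2}\,dv$; these are genuinely different transforms of $k$ (the geodesic-period weight in the spectral variable vanishes at $t=0$ and only tends to the constant weight of $g(0)$ as $|t|\to\infty$). They agree to leading order as $h\to\chi_{[-T,T]}$, which is why the constant $\mu(\ell)/\pi$ emerges, but the step must be stated and proved as an asymptotic evaluation, not an identity. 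Second, the claim that the off-axis double cosets contribute $o(T)$ is precisely the analytic content of the theorem and does not follow from the counting bound $O(e^R)$ alone: for the sharp cutoff $h=\chi_{[-T,T]}$ the kernel $k$ has no usable decay, and even after smoothing at scale $1/T$ one must beat the exponential count of translates $\gamma\ell$ by the decay of the relevant orbital integral in $\rho(\ell,\gamma\ell)$, uniformly enough to survive the final Tauberian step. You correctly flag this as the hard part, but as written the sketch asserts the conclusion exactly where the work is. Given that the paper itself simply cites Good and Tsuzuki, the reasonable options are to do the same or to supply these two steps in full.
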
 
We refer to \cite[p.~3-4]{martin} for a detailed history of this result. We also give the following definition which is related to Theorem \ref{localweylslawperiods2}. 
\\
\begin{definition} \label{sufficientmanydefinition} Fix $\mathcal{H}$ be a hyperbolic class of a cofinite but not cocompact group $\Gamma$. We say that the group $\Gamma$ has sufficiently small Eisenstein periods associated to $\mathcal{H}$ if for all cusps $\frak{a}$ we have
\begin{eqnarray*}
 \int_{-T}^{T} | \hat{E}_{\mathfrak{a}} (1/2 + it)|^2 dt \ll \frac{T}{(\log T)^{1+\delta}}
\end{eqnarray*}
for a fixed $\delta >0$.
\end{definition}
For the rest of this paper we write $\int_{\mathcal{H}} ds$ to indicate that we average over a segment of the invariant geodesic $\ell$ of length $\mu/\nu$. When $\mathcal{H}$ is the class of a primitive element we get $\nu=1$, hence $\int_{\mathcal{H}} ds = \int_{\ell} ds$.

We distinguish the two cases of $\Omega$-results: if $g(X)$ is a positive function, we write $e(X;z,w) = \Omega_{+}(g(X))$ if
\begin{eqnarray*}
\lim \sup \frac{e(X;z,w)}{g(X)} > 0,
\end{eqnarray*}
 and $e(X;z,w) = \Omega_{-}(g(X))$ if
\begin{eqnarray*}
\lim \inf \frac{e(X;z,w)}{g(X)} < 0.
\end{eqnarray*}
 In section \ref{section4} we prove the following theorem, which is an average $\Omega$-result on the closed geodesic of $\mathcal{H}$.
\\
\begin{theorem} \label{result2} (a) If $\Gamma$ is either (i) cocompact or (ii) cofinite but not cocompact and  has sufficiently small Eisenstein periods associated to $\mathcal{H}$ according to Definition \ref{sufficientmanydefinition}, then
\begin{eqnarray*}
\int_{\mathcal{H}} e(\mathcal{H}, X ;z) ds(z) = \Omega_{+} (X^{1/2} \log \log \log X).
\end{eqnarray*}
\\(b) If  $\Gamma$ is cofinite but not cocompact and either (i) $\hat{u}_j \neq 0$ for at least one $\lambda_j>1/4$ or (ii) $\hat{E}_{\mathfrak{a}} (1/2) \neq 0$ for a cusp $\mathfrak{a}$ then
\begin{eqnarray*}
\int_{\mathcal{H}} e(\mathcal{H}, X ;z) ds(z) = \Omega_{+} (X^{1/2}).
\end{eqnarray*}
\end{theorem}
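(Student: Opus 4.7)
The plan is to start from the spectral expansion of $e(\mathcal{H}, X;z)$ that will be derived in Section \ref{spectraltheoryoftheconjugacyclassproblem} and to integrate in $z$ along a segment of the invariant geodesic of $\mathcal{H}$ of length $\mu/\nu$. The crucial algebraic fact is that $\int_{\mathcal{H}} u_j(z)\, ds(z) = \overline{\hat{u}_j}$ (cf.~(\ref{periodintegral})) and, by Definition \ref{definitioneisensteinperiods}, $\int_{\mathcal{H}} E_{\mathfrak{a}}(z, 1/2-it)\, ds(z) = \hat{E}_{\mathfrak{a}}(1/2+it)$. Consequently, after integration, the coefficients in the spectral expansion become the nonnegative quantities $|\hat{u}_j|^2$ and $|\hat{E}_{\mathfrak{a}}(1/2+it)|^2$, and schematically
$$
\int_{\mathcal{H}} e(\mathcal{H}, X;z)\, ds(z) \sim X^{1/2} \left[ \sum_{t_j>0} A(s_j)\, |\hat{u}_j|^2\, X^{it_j} + \sum_{\mathfrak{a}} \frac{1}{4\pi} \int_{-\infty}^{\infty} A(1/2+it)\, |\hat{E}_{\mathfrak{a}}(1/2+it)|^2\, X^{it}\, dt \right].
$$
This positivity of all coefficients is what makes a one-sided $\Omega_+$-bound possible.

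For part (a) I would apply the resonance method of Phillips and Rudnick. A direct Stirling analysis of the gamma factors in (\ref{a-function}) yields $|A(1/2+it)| \asymp 1/|t|$ as $|t| \to \infty$, and Abel summation against Theorem \ref{localweylslawperiods2} then gives
$$
\sum_{0 < t_j \leq T} |A(s_j)|\, |\hat{u}_j|^2 \;\asymp\; \log T.
$$
Next, Dirichlet's theorem on simultaneous Diophantine approximation applied to the $N(T) \asymp T^2$ Laplace frequencies in $[0, T]$ (Weyl's law) produces a number $y_0 \ll \exp(C T^2 \log (1/\eta))$ such that $\| t_j y_0 / (2\pi) \| < \eta$ for every $t_j \leq T$. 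Setting $X = e^{y_0}$ aligns the phases $X^{it_j}$ near $1$ and, using positivity to control the tail $t_j > T$ after smoothing by a suitable weight, one obtains $\int_{\mathcal{H}} e(\mathcal{H}, X;z)\, ds(z) \gg X^{1/2} \log T$. Balancing $T^2 \asymp \log \log X$ delivers the advertised bound $X^{1/2} \log \log \log X$. In the non-cocompact subcase (a)(ii), the hypothesis of Definition \ref{sufficientmanydefinition} together with $|A(1/2+it)| \asymp 1/|t|$ makes the continuous contribution absolutely bounded, hence negligible compared to the discrete resonance.

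For part (b) the argument is more elementary. In case (ii), integrating Theorem \ref{result1}(b) once more against $z \in \mathcal{H}$ (using that $E_{\mathfrak{a}}(z, 1/2)$ is real on the critical line, so $\int_{\mathcal{H}} E_{\mathfrak{a}}(z, 1/2)\, ds(z) = \hat{E}_{\mathfrak{a}}(1/2)$) gives the Ces\`aro mean
$$
\frac{|\Gamma(3/4)|^2}{\pi^{3/2}}\sum_{\mathfrak{a}} |\hat{E}_{\mathfrak{a}}(1/2)|^2,
$$
which is strictly positive under the hypothesis, forcing $\limsup \geq$ mean and hence $\Omega_+(X^{1/2})$. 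In case (i), Parseval's theorem for almost-periodic functions applied to the spectral expansion above yields
$$
\lim_{T \to \infty} \frac{1}{T} \int_0^T \left| \int_{\mathcal{H}} e(\mathcal{H}, e^r; z)\, ds(z) \right|^2 e^{-r}\, dr \;\geq\; |A(s_{j_0})|^2\, |\hat{u}_{j_0}|^4 \;>\; 0,
$$
so the integrated error cannot be $o(X^{1/2})$, and almost periodicity upgrades this to $\limsup_{X \to \infty} X^{-1/2} \int_{\mathcal{H}} e(\mathcal{H}, X;z)\, ds(z) > 0$, giving $\Omega_+(X^{1/2})$.

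The main technical obstacle is the simultaneous Diophantine step in part (a) under the weaker local Weyl law of Theorem \ref{localweylslawperiods2}, whose density $T$ (rather than $T^2$) produces only a $\log T$ resonance from the discrete spectrum and forces the balance $T \asymp \sqrt{\log\log X}$. This accounts for the triple logarithm in the final bound and for the need to control the Eisenstein integral via Definition \ref{sufficientmanydefinition}.
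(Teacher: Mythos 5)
Your overall strategy for part (a) is the same as the paper's: integrate the spectral expansion along the geodesic so that the coefficients become $|\hat{u}_j|^2$ and $|\hat{E}_{\mathfrak{a}}(1/2+it)|^2$, mollify, apply Dirichlet's box principle to the $\asymp T^2$ frequencies $t_j\le T$, exploit the Good--Tsuzuki asymptotic (Theorem \ref{localweylslawperiods2}) to extract a $\log T$ resonance, and balance to obtain the triple logarithm; the hypothesis of Definition \ref{sufficientmanydefinition} is used exactly as you say, to make the continuous contribution a nonnegative constant plus $o(1)$. However, there is a genuine gap at the heart of the resonance step. After the phases $e^{it_jR}$ are aligned near $1$, the quantity you are bounding from below is
\begin{equation*}
\sum_{t_j<T}\Re\bigl(G(t_j)\Gamma(it_j)\bigr)\,|\hat{u}_j|^2 ,
\end{equation*}
not $\sum_{t_j<T}|A(s_j)|\,|\hat{u}_j|^2$. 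The estimate $|A(1/2+it)|\asymp 1/|t|$ (equivalently $|G(t)\Gamma(it)|\asymp(1+|t|)^{-1}$, the paper's (\ref{asymptot})) says nothing about the sign of the real part of these archimedean factors, so a priori the aligned sum could suffer massive cancellation and fail to be $\gg\log T$. The paper closes this with Lemma \ref{gammalemma2}(a), the fixed-sign property $\Re\bigl(G(t)\Gamma(it)\bigr)>0$ for all real $t\neq0$ (with the quantitative consequence $\Re(G(t)\Gamma(it))\gg t^{-1}$), proved by a rather delicate Beta-function and monotonicity argument; the paper explicitly flags this lemma as the crucial ingredient, and it is also what dictates that the result is $\Omega_+$ rather than $\Omega_-$. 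Your proposal never addresses this, so as written the lower bound $\gg X^{1/2}\log T$ does not follow.

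A secondary issue concerns part (b)(i). Your Parseval/almost-periodicity argument yields at best a two-sided $\Omega(X^{1/2})$, and upgrading it to $\Omega_+$ again requires the sign information above; moreover, the spectral expansion has coefficients of size $\asymp t_j^{-1}|\hat{u}_j|^2$, which by Theorem \ref{localweylslawperiods2} are not absolutely summable (their partial sums grow like $\log T$), so the function is not obviously Bohr almost periodic and the Parseval identity needs justification. The paper instead runs the same Dirichlet-alignment argument as in part (a): with at least one $\hat{u}_j\neq0$ the aligned finite sum is a single positive quantity $\Omega_+(1)$, and the continuous contribution is the nonnegative constant $\pi^{-3/2}|\Gamma(3/4)|^2\sum_{\mathfrak{a}}|\hat{E}_{\mathfrak{a}}(1/2)|^2$ plus $o(1)$, which handles case (ii) directly without invoking Theorem \ref{result1}.
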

\begin{remark}
In subsection \ref{arithmeticexampleresults}  we will see that the modular group $\Gamma = {\hbox{PSL}_2( {\mathbb Z})}$ has sufficiently small Eisenstein periods associated to a fixed conjugacy class $\mathcal{H} \subset \Gamma$. This follows from a subconvexity bound on the critical line for an Epstein zeta function associated to $\mathcal{H}$. 
\end{remark}

The asymptotic behaviour for the sums of period integrals in Theorem \ref{localweylslawperiods2} is $c T$, where in local Weyl's law (Theorem \ref{localweylslaw}) we get an asymptotic $c T^2$. If $\Gamma$ is cocompact or cofinite but it has sufficiently small Eisenstein periods associated to $\mathcal{H}$ then 
\begin{equation}
\sum_{|t_j| < T} |\hat{u}_j|^2 \sim \frac{\mu(\ell)}{\pi}  T,
\end{equation}
and summation by parts implies
\begin{equation}
\sum_{|t_j|< T} \frac{|\hat{u}_j|^2}{t_j} \gg \log T.
\end{equation}
In case (a) of Theorem \ref{result2} the triple logarithm should be compared with the extra factor $(\log \log X)^{1/4-\delta}$ in case (a) of Theorem \ref{philrudn2}. The first is a consequence of the asymptotic behaviour of period integrals in Theorem \ref{localweylslawperiods2}, and the second is a consequence of the local Weyl's law.

To prove pointwise $\Omega$-results for $e(\mathcal{H}, X;z)$ we would like to have a fixed pair $(z, \mathcal{H})$ with $e(\mathcal{H}, X ;z)$ large, i.e. a pair $(z, \mathcal{H})$ with a uniform {\lq fixed sign\rq} property of all $\hat{u}_j u_j(z)$. That would allow us to prove a pointwise $\Omega$-result of the form
\begin{eqnarray*}
\limsup_{X} \frac{|e(\mathcal{H}, X;z)|}{X^{1/2}} = \infty.
\end{eqnarray*}
However, Maass forms have complicated behaviour on the surfaces $\Gamma \backslash \mathbb{H}$; for instance, the nodal domains have very complicated shapes. For this reason we have not been able to determine any such specific pair $(z, \mathcal{H})$ with the desired fixed sign property. To overcome this problem we notice that the period integral is the limit of Riemann sums. Starting with a fixed conjugacy class $\mathcal{H}$, a discrete average allows us to prove the existence of at least one point $z=z_{\mathcal{H}}$ for which the error $e(\mathcal{H}, X;z_{\mathcal{H}})$ cannot be small.
\\\\
 We first prove the following proposition for discrete averages. 

\begin{proposition}\label{result3} Let $\mathcal{H}$ be a fixed hyperbolic class in $\Gamma$. If $\Gamma$ is either (i) cocompact or (ii) if $\Gamma$ is as in part (b) of Theorem \ref{result2}, then there exist an integer $K=K_{\mathcal{H}}$ depending only on $\mathcal{H}$ and $z_1, z_2, ... , z_K$ points on $\ell$ such that:
\begin{displaymath}
\frac{1}{K} \sum_{m=1}^{K} e(\mathcal{H}, X;z_{m}) = \Omega_{+} ( X^{1/2} ).
\end{displaymath}
\end{proposition}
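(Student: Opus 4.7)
\emph{Proof plan.} The strategy is to carry over the argument for Theorem \ref{result2} from the continuous integral to the discrete average by using a Riemann sum to approximate the period integrals. Starting from the spectral expansion of $e(\mathcal{H}, X; z)$, the discrete average $\widetilde{E}_K(X) := \tfrac{1}{K}\sum_{m=1}^K e(\mathcal{H}, X; z_m)$ has spectral coefficients $\hat{u}_j \cdot b_j^{(K)}$, where $b_j^{(K)} := \tfrac{1}{K}\sum_m u_j(z_m)$, together with the analogous Eisenstein coefficients. For any fixed $j_0$, Riemann-sum convergence gives $\tfrac{\mu/\nu}{K}\sum_m u_{j_0}(z_m) \to \overline{\hat{u}_{j_0}}$ as $K\to\infty$, so $\hat{u}_{j_0} b_{j_0}^{(K)}$ is close to the non-negative quantity $\tfrac{\nu}{\mu}|\hat{u}_{j_0}|^2$ for $K$ sufficiently large --- exactly the positivity that was exploited for the continuous integral.

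In the cofinite non-cocompact case with $\hat{E}_\mathfrak{a}(1/2) \neq 0$ for some cusp $\mathfrak{a}$, Theorem \ref{result1}(b) applied termwise to $\widetilde{E}_K$ gives
\[
\lim_{T\to\infty} \tfrac{1}{T}\int_0^T e^{-r/2}\widetilde{E}_K(e^r)\,dr = \tfrac{|\Gamma(3/4)|^2}{\pi^{3/2}}\cdot\tfrac{1}{K}\sum_{m=1}^K C(z_m), \qquad C(z) := \sum_\mathfrak{a} \hat{E}_\mathfrak{a}(1/2) E_\mathfrak{a}(z,1/2).
\]
Integrating $C$ across a segment $\sigma \subset \ell$ of length $\mu/\nu$ yields $\int_\sigma C\,ds = \sum_\mathfrak{a}|\hat{E}_\mathfrak{a}(1/2)|^2 > 0$ by hypothesis, so $C$ is not identically zero on $\sigma$; being real-analytic along the geodesic, it must take strictly positive values on an open subset. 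Picking $z_1 \in \sigma$ with $C(z_1) > 0$ and taking $K=1$ makes the radial mean strictly positive, which immediately yields $\widetilde{E}_K = \Omega_+(X^{1/2})$.

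In the cocompact case, or the cofinite non-cocompact case with $\hat{u}_{j_0} \neq 0$ for some $\lambda_{j_0} > 1/4$, the radial mean from Theorem \ref{result1} vanishes, so one has to extract $\Omega_+$ from a single oscillating spectral term. Fix such a $j_0$ (existence in the cocompact case follows from Theorem \ref{localweylslawperiods2}) and choose $K$ large enough that $\hat{u}_{j_0} b_{j_0}^{(K)}$ is close to $\tfrac{\nu}{\mu}|\hat{u}_{j_0}|^2 > 0$. A Phillips--Rudnick-type spectral projection gives
\[
\lim_{R\to\infty} \tfrac{1}{R}\int_0^R e^{-r/2}\widetilde{E}_K(e^r)\, e^{-it_{j_0} r}\,dr = A(\tfrac{1}{2}+it_{j_0})\,\hat{u}_{j_0}\, b_{j_0}^{(K)} \neq 0,
\]
since all other spectral frequencies contribute zero in the mean. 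Combined with a Kronecker-equidistribution choice of $r_n$ with $t_{j_0} r_n$ in the correct residue class modulo $2\pi$, this produces a sequence $X_n = e^{r_n}$ at which the $j_0$-contribution attains its full modulus, delivering the $\Omega_+(X^{1/2})$ bound.

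The principal technical obstacle is in the last step: the Riemann sums $b_j^{(K)}$ are not bounded as $t_j\to\infty$, so justifying termwise integration in the spectral projection identity requires controlling the tail. The Cauchy--Schwarz bound $\sum_{|t_j|\leq T}|b_j^{(K)}|^2 \leq \tfrac{1}{K}\sum_m \sum_{|t_j|\leq T}|u_j(z_m)|^2 \ll T^2$ from the local Weyl law, together with $\sum_{|t_j|\leq T}|\hat{u}_j|^2 \sim \tfrac{\mu}{\pi}T$ (Theorem \ref{localweylslawperiods2}) and the Stirling-type asymptotic $|A(\tfrac{1}{2}+it)| \asymp t^{-1}$, should furnish enough control --- most cleanly after a short smoothing in the spirit of Phillips--Rudnick.
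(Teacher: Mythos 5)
Your treatment of case (b)(ii) (some $\hat{E}_{\mathfrak{a}}(1/2)\neq 0$) is a genuinely different and clean route: since $\int_{\sigma}\sum_{\mathfrak{a}}\hat{E}_{\mathfrak{a}}(1/2)E_{\mathfrak{a}}(z,1/2)\,ds(z)=\sum_{\mathfrak{a}}|\hat{E}_{\mathfrak{a}}(1/2)|^2>0$, continuity gives a point of $\ell$ at which the limit in Theorem \ref{result1}(b) is strictly positive, and a strictly positive radial mean immediately forces $\Omega_{+}(X^{1/2})$ with $K=1$. This is valid (it is essentially how the paper obtains Theorem \ref{result5}(c)), and it even sidesteps the discrete average in that subcase. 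The Riemann-sum approximation of $\hat{u}_j$ and the mollification you invoke for the tail are also exactly the paper's devices.

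The gap is in the main case (cocompact, or only a nonzero $\hat{u}_{j_0}$), in the step where you extract the \emph{sign}. You show that the Fourier coefficient of the normalized average $e^{-r/2}\widetilde{E}_K(e^r)$ at the single frequency $t_{j_0}$ is nonzero, and then choose $r_n$ by Kronecker equidistribution so that the $j_0$-term is positive at $X_n=e^{r_n}$. But aligning the one phase $e^{it_{j_0}r_n}$ controls only that one term; the remaining spectral sum over $j\neq j_0$ is an almost periodic quantity of comparable size that is completely uncontrolled at those particular $r_n$ and may well be negative there. A single nonvanishing Fourier coefficient yields only $\limsup|\cdot|>0$, i.e.\ an $\Omega$-result, not $\Omega_{+}$. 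Two repairs are available. The paper's: apply Dirichlet's box principle (Lemma \ref{dirichletsboxprinciple}) to \emph{all} of the $\asymp A^2$ frequencies $t_j\leq A$ simultaneously, so that every phase $e^{it_jR}$ is within $1/T$ of $1$, and then invoke the fixed-sign property $\Re\left(G(t)\Gamma(it)\right)>0$ of part (a) of Lemma \ref{gammalemma2}, so that after the Riemann-sum replacement \emph{every} truncated term $\mu(\ell)^{-1}|\hat{u}_j|^2\,\Re\left(G(t_j)\Gamma(it_j)\right)\hat{\psi}_{\epsilon}(t_j)$ is nonnegative and the sum is bounded below by a positive quantity, the tail $t_j>A$ being killed by the mollifier. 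Alternatively: combine your nonzero $t_{j_0}$-coefficient with the vanishing radial mean from Theorem \ref{result1}(a) to deduce that the $L^1$-average of $e^{-r/2}\widetilde{E}_K(e^r)$ over $[0,R]$ stays bounded away from $0$ while its plain average tends to $0$, which forces both a positive $\limsup$ and a negative $\liminf$. Either way, some global control of the spectrum beyond the single mode $j_0$ is indispensable; the positivity statement of Lemma \ref{gammalemma2} is the ingredient your sketch is missing, and it is precisely what makes the simultaneous phase alignment produce a definite sign.
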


In comparison with our results in \cite{chatz}, in order to prove $\Omega_{-}$-results for the error $e(\mathcal{H}, X;z)$ we are lead to  investigate the behaviour of a modification of the average error term
\begin{displaymath}
 \frac{1}{X} \int_{1}^{X} \frac{e(\mathcal{H}, x ;z)}{x^{1/2}} d x
\end{displaymath}
on the geodesic $\ell$. 
\begin{proposition} \label{result4} Let $\Gamma$ be either (i) cocompact or (ii) cofinite but not cocompact, $\hat{u}_j \neq 0$ for at least one $\lambda_j>1/4$ and $\hat{E}_{\mathfrak{a}} (1/2) = 0$ for all cusps $\mathfrak{a}$. Then there exist an integer $K = K_{\mathcal{H}}$ and $z_1, z_2, ... , z_K$ points in $\ell$ such that, as $X \to \infty$:
\begin{displaymath}
\frac{1}{K} \sum_{m=1}^{K} \frac{1}{X} \int_{1}^{X} \frac{e(\mathcal{H}, x ;z)}{x^{1/2}} d x   = \Omega_{-}(1).
\end{displaymath}
\end{proposition}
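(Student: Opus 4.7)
The plan is to adapt the almost-periodic function argument developed in \cite{chatz} for the classical lattice problem to the conjugacy class setting. Starting from the spectral expansion of $e(\mathcal{H}, x; z)$ in Section \ref{spectraltheoryoftheconjugacyclassproblem}, integration term-by-term from $1$ to $X$ and division by $X$ gives, after truncating the spectral sum at a height $T=T(X)$,
\[
F(X;z) := \frac{1}{X} \int_1^X \frac{e(\mathcal{H}, x; z)}{x^{1/2}}\, dx = \sum_{|t_j| \leq T} \frac{A(1/2+it_j)\, \hat{u}_j u_j(z)}{1 + it_j}\, X^{i t_j} + I_{\mathrm{Eis}}(X;z) + o(1),
\]
where $I_{\mathrm{Eis}}(X;z)$ is the integrated contribution of the continuous spectrum. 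Up to a vanishing remainder this is a real-valued Bohr almost-periodic function of $\log X$; indeed, with $r = \log X$ the map $r \mapsto F(e^r;z)$ is the convolution of $\frac{e(\mathcal{H}, e^r; z)}{e^{r/2}}$ with the exponential kernel $e^{-u}$ on $[0,\infty)$, which preserves the almost-periodic structure while multiplying the $k$-th Fourier coefficient by $(1+i\lambda_k)^{-1}$.

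The Bohr mean of $F(X;z)$ along $\log X \to \infty$ vanishes under the standing hypotheses: the only frequency contributing to the mean is $t=0$, which arises from the Eisenstein piece and is proportional to $\sum_{\mathfrak{a}} \hat{E}_{\mathfrak{a}}(1/2) E_{\mathfrak{a}}(z,1/2)$. This is absent in case (i) and vanishes by assumption in case (ii), so this is a direct consequence of Theorem \ref{result1} combined with the convolution identity above.

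To force a nontrivial almost-periodic expansion I choose $z_1, \dots, z_K \in \ell$ as a Riemann partition of a segment of the geodesic of length $\mu/\nu$. Continuity of $u_{j_0}$ on $\ell$ yields, for $K$ sufficiently large, $\frac{1}{K} \sum_{m=1}^{K} u_{j_0}(z_m) = \frac{\nu}{\mu}\, \overline{\hat{u}_{j_0}} + o(1)$, so that the Fourier coefficient of $X^{i t_{j_0}}$ in the averaged function $\frac{1}{K} \sum_m F(X; z_m)$ equals
\[
\frac{\nu\, A(1/2 + it_{j_0})\, |\hat{u}_{j_0}|^2}{\mu(1 + it_{j_0})} + o(1) \neq 0,
\]
using $A(1/2 + it_{j_0}) \neq 0$ and the assumption $\hat{u}_{j_0} \neq 0$. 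Since the Eisenstein integral is absolutely continuous in the frequency variable, it cannot cancel this isolated Dirac mass. Thus $\frac{1}{K} \sum_m F(X;z_m)$ is a real-valued almost-periodic function of $\log X$ with zero Bohr mean which is not identically zero. By a standard fact of Bohr almost-periodic theory, such a function must attain strictly negative values (otherwise nonnegativity combined with zero mean would force identical vanishing), giving $\liminf_{X \to \infty} \frac{1}{K} \sum_m F(X; z_m) < 0$, which is the claimed $\Omega_{-}(1)$ statement.

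The main obstacle I anticipate is the rigorous justification of the almost-periodic representation of $F(X;z)$: the spectral sum converges only conditionally, so the truncation height $T$ must be balanced against $X$ to render the tail $o(1)$ uniformly; more subtly, the Eisenstein integral must be reorganised via a contour shift (as in Section \ref{section3}) to isolate its $t=0$ contribution and verify that the remaining oscillatory part is absolutely continuous in the frequency variable, hence carries no Dirac mass at any discrete $t_{j_0}$ that could compete with the Maass coefficient produced by the Riemann sum.
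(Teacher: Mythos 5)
Your argument is essentially correct, but it reaches the conclusion by a genuinely different route than the paper. The paper applies Dirichlet's box principle (Lemma \ref{dirichletsboxprinciple}) to the frequencies $t_j\le A$ to find radii $Y$ with $Y^{it_j}$ close to $1$, and then invokes the fixed-sign property of Lemma \ref{gammalemma2}(b), namely $\Re\bigl(G(t)\Gamma(it)/(1+it)\bigr)<0$ for all $t\ne 0$, so that the truncated main term becomes a sum of terms of the \emph{same} (negative) sign, bounded away from zero as soon as one $\hat{u}_j\ne 0$; the continuous spectrum is then killed by the hypothesis $\hat{E}_{\mathfrak{a}}(1/2)=0$ and the Riemann--Lebesgue lemma. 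You instead exploit the Bohr almost-periodic structure: zero mean (the only possible constant term is the Eisenstein residue at $t=0$, which vanishes under the hypotheses) plus one nonzero Fourier coefficient plus Parseval forces sign changes on a relatively dense set. Your route buys a real simplification --- it needs only $G(t_{j_0})\Gamma(it_{j_0})\ne 0$, not the delicate monotonicity/Beta-function analysis behind Lemma \ref{gammalemma2}(b), and it yields $\Omega_{\pm}$ simultaneously --- while the paper's route gives a quantitative negative lower bound in which \emph{every} nonzero period contributes with the same sign, rather than relying on a single isolated frequency. Three points you should make explicit when writing this up: (1) in the cocompact case the existence of some $\hat{u}_{j_0}\ne 0$ with $\lambda_{j_0}>1/4$ is not a hypothesis but must be extracted from Theorem \ref{localweylslawperiods2} (there is no continuous spectrum to absorb the mass $\sim \mu(\ell)T/\pi$); (2) to speak of a single almost-periodic function $P(\log X)$ you should keep the full, untruncated series --- it converges absolutely because the factor $(1+it_j)^{-1}$ coming from the integration upgrades the coefficients to $O\bigl((1+|t_j|)^{-2}|\hat{u}_j||u_j(z)|\bigr)$, which is summable by Cauchy--Schwarz against Theorems \ref{localweylslaw} and \ref{localweylslawperiods2} --- rather than truncate at an $X$-dependent height; and (3) eigenvalue multiplicity is harmless since all $j$ with $t_j=t_{j_0}$ contribute $|\hat{u}_j|^2\ge 0$ with the same phase $G(t_{j_0})\Gamma(it_{j_0})/(1+it_{j_0})$. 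The technical obstacles you flag (term-by-term integration of a conditionally convergent expansion, and the contour/$L^1$ analysis of the Eisenstein integral near the pole of $\Gamma(it)$) are real but are handled in the paper by exactly the tools you name (the analogue of Lemma \ref{hubertransformconvergenceinxvariable} and the contour of Lemma \ref{hubertransformconvergence}), so they do not constitute a gap in your plan.
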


We deduce the following theorem on pointwise $\Omega$-results for the error term $e(\mathcal{H}, X; z)$ as an immediate corollary of Theorem \ref{result1} and Propositions \ref{result3}, \ref{result4}.

\begin{theorem} \label{result5} Let $\Gamma$ be a Fuchsian group, $\mathcal{H}$ a hyperbolic conjugacy class of $\Gamma$ and $\ell $ the invariant closed geodesic of $\mathcal{H}$. 
 \\
(a) If $\Gamma$ is as in Proposition \ref{result3}, then there exist at least one point $z_{\mathcal{H}}  \in \ell$ such that: 
\begin{displaymath}
e(\mathcal{H}, X;z_{\mathcal{H}})= \Omega_{+} ( X^{1/2}).
\end{displaymath}
(b) If $\Gamma$ is as in Proposition \ref{result4}, then there exists at least one point $z_{\mathcal{H}}  \in \ell$ such that: 
 \begin{displaymath}
 e(\mathcal{H}, X; z_{\mathcal{H}}) = \Omega_{-} (X^{1/2}).
\end{displaymath}
(c) If $\Gamma$ is not cocompact and the sum $\sum_{\mathfrak{a}} \hat{E}_{\mathfrak{a}} (1/2) E_{\mathfrak{a}} (z, 1/2) $ does not vanish then:
\begin{displaymath}
e(\mathcal{H}, X;z)= \Omega ( X^{1/2}).
\end{displaymath}
\end{theorem}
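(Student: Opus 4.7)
My plan is to deduce the three parts as short consequences of Theorem \ref{result1} and Propositions \ref{result3}, \ref{result4}; no new spectral input is required.

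Part (c) I would handle first, since it follows most directly from Theorem \ref{result1}(b). Under the hypothesis that $\sum_{\mathfrak{a}} \hat{E}_{\mathfrak{a}}(1/2) E_{\mathfrak{a}}(z, 1/2) \neq 0$, that theorem says the Ces\`aro-type mean
\[
\frac{1}{T}\int_0^T \frac{e(\mathcal{H}, e^r; z)}{e^{r/2}}\, dr
\]
tends to a nonzero constant $c$ as $T\to\infty$. If the integrand were $o(1)$, this mean would vanish; therefore $e(\mathcal{H}, e^r; z)/e^{r/2}$ cannot tend to $0$, and via the substitution $X=e^r$ we conclude $e(\mathcal{H}, X; z) = \Omega(X^{1/2})$.

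For part (a), I would invoke Proposition \ref{result3}, which supplies a finite set $z_1,\ldots, z_K \in \ell$ with $\frac{1}{K}\sum_{m=1}^{K} e(\mathcal{H}, X; z_m) = \Omega_+(X^{1/2})$. Unwinding the definition, there exist $c>0$ and a sequence $X_n \to \infty$ along which the averaged error exceeds $c X_n^{1/2}$, so at each $n$ at least one summand exceeds the same bound. Since $K$ is finite, the pigeonhole principle yields a single index $m_0$ and a subsequence along which $e(\mathcal{H}, X_{n_k}; z_{m_0}) \geq c X_{n_k}^{1/2}$; the point $z_{\mathcal{H}} := z_{m_0}$ then witnesses (a).

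The main subtlety is in part (b), where Proposition \ref{result4} provides points $z_1,\ldots, z_K \in \ell$ only for the averaged \emph{integrated} statement
\[
\frac{1}{K}\sum_{m=1}^{K} \frac{1}{X}\int_1^X \frac{e(\mathcal{H}, x; z_m)}{x^{1/2}}\, dx = \Omega_-(1).
\]
I would argue by contrapositive: suppose every $z_m$ satisfied $\liminf_{x\to\infty} e(\mathcal{H}, x; z_m)/x^{1/2} \geq 0$. Then for each fixed $\varepsilon>0$ and each $m$ there is a threshold $X_0(\varepsilon, m)$ beyond which $e(\mathcal{H}, x; z_m) \geq -\varepsilon\, x^{1/2}$; splitting the integral at $X_0$ and using the unconditional bound $e(\mathcal{H}, x; z_m) = O(x^{2/3})$ from Theorem \ref{mainformulaconjugacy} to control the initial segment, one obtains $\frac{1}{X}\int_1^X \frac{e(\mathcal{H}, x; z_m)}{x^{1/2}}\, dx \geq -\varepsilon + o(1)$ as $X\to\infty$. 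Summing over the finite set and invoking superadditivity of $\liminf$, the averaged integrated quantity would then have $\liminf \geq -\varepsilon$ for every $\varepsilon$, hence $\geq 0$, contradicting $\Omega_-(1)$. Consequently some $z_m$ satisfies $\liminf e(\mathcal{H}, x; z_m)/x^{1/2} < 0$, which is the pointwise $\Omega_-$-bound. This de-averaging step in (b) is the only place where any real work happens; the pigeonhole argument in (a) and the Ces\`aro argument in (c) are essentially formal, and the $\varepsilon$-argument in (b) relies on the crude $O(x^{2/3})$ bound only to absorb the initial segment after division by $X$.
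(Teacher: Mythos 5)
Your proposal is correct and matches the paper's intent: the paper states Theorem \ref{result5} as an immediate corollary of Theorem \ref{result1} and Propositions \ref{result3}, \ref{result4}, and your three deductions (pigeonhole de-averaging of the finite sum for (a), the contrapositive $\liminf$-superadditivity argument for (b), and the observation for (c) that a Ces\`aro mean with nonzero limit forces the integrand not to be $o(1)$) are exactly the omitted routine steps, carried out correctly.
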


Finally, at the last section, as an application of Theorem \ref{localweylslawperiods2} we obtain upper bounds for the error terms of both the classical problem and the conjugacy class problem on geodesics.
\begin{remark} 
For the proof of Theorem \ref{result2} and Propositions \ref{result3}, \ref{result4} we will crucially need some {\lq fixed-sign\rq} properties of the $\G$-function stated in Lemma \ref{gammalemma2}. We emphasize that the differences in the signs in the two cases of  Lemma \ref{gammalemma2} cause the different signs of our $\Omega$-results.
\end{remark}



\begin{remark}
It follows from Theorem \ref{result5} that in order to prove a pointwise result $e(\mathcal{H}, X ; z )= \Omega (X^{1/2})$ for one point $z$, we must only assume the nonvanishing of one period $\hat{u}_j$. In this case, the sign of our $\Omega$-result can be determined by the vanishing or not of the Eisenstein period integrals. If $\G$ is cocompact or all Eisenstein periods vanish then there exists at least two points $z, w  \in \ell$ such that: 
\begin{equation} 
\begin{split}
e(\mathcal{H}, X;z)= \Omega_{+} ( X^{1/2}), \\
e(\mathcal{H}, X;w)= \Omega_{-} ( X^{1/2}).
\end{split} 
\end{equation}

These Eisentein periods are of particular arithmetic interest; in fact $\hat{E}_{\mathfrak{a}}(1/2)$ is the constant term of the hyperbolic Fourier expansion of $E_{\mathfrak{a}}(z,s)$ (see \cite[section~3.2]{goldfeld}). In the arithmetic case, these periods are associated to special values of Epstein zeta functions (see subsection \ref{arithmeticexampleresults}). We notice that, in principle, it is easier to check the nonvanishing of one period $\hat{E}_{\mathfrak{a}}(1/2)$ than the nonvanishing of the sum $\sum_{\mathfrak{a}} \hat{E}_{\mathfrak{a}} (1/2) E_{\mathfrak{a}} (z, 1/2)$.
\end{remark}

\begin{remark}
Phillips and Rudnick in \cite{phirud} generalized Theorem \ref{philrudn1} and case $c)$ of Theorem \ref{philrudn2} in the case of the $n$-dimensional hyperbolic space $\H^n$ \cite[p.~106]{phirud}. 

Recently, Paarkonen and Paulin \cite{parkpaul} studied the hyperbolic lattice point problem in conjugacy classes for the $n$-th hyperbolic space and in a more general setting. However, their geometric approach cannot be used to generalise our results in dimensions $n \geq 3$. To do this, we need an explicit expression for the Huber transform $d_n (f,t)$ in the $n$-th dimension. In dimension $n=3$, $d_3(f,t)$ was recently studied explicitly by Laaksonen in \cite{laaksonen}, where he obtained upper bounds for the second moments of the error term, generalising previous work by the author and Petridis \cite{chatzpetr}.

\end{remark}

\subsection{Acknowledgments} I would like to thank my supervisor Y. Petridis for his helpful guidance and encouragement. I would also want to thank V. Blomer for bringing to my knowledge the subconvexity bound for the Epstein zeta function of an indefinite quadratic form. Finally, I would like to thank the two anonymous referees for many corrections and their valuable and helpful comments.

\section{Spectral theory and counting} \label{spectraltheoryoftheconjugacyclassproblem}

\subsection{The Huber transform} \label{thehubertransform}

We briefly state the basic results from the spectral theory of automorphic forms for the conjugacy class problem (see \cite[section 2]{chatzpetr} for the details). Let $C_0^*[1, \infty)$ denote the space of real functions of compact support that are bounded in $[1, \infty)$ and have at most finitely many discontinuities.
\begin{definition} Let $f \in C_0^*[1, \infty)$.  The Huber transform $d(f,t)$ of $f$ at the spectral parameter $t$ is defined as
\begin{equation}\label{coefficients}
d(f, t) = \int_{0}^{\frac{\pi}{2}} f \left(\frac{1}{\cos^2 v} \right) \frac{\xi_{\lambda}(v)}{\cos^2 v} dv,
\end{equation}
with $\lambda=1/4+t^2$, and $\xi_{\lambda}$  is the solution of the differential equation
\begin{equation}\label{huberresult}\xi_{\lambda}''(v) + \frac{\lambda}{\cos^2 v} \xi_\lambda(v) = 0, \quad v \in \Big(-\frac{\pi}{2}, \frac{\pi}{2} \Big), \end{equation}
with $\xi_{\lambda}(0)=1$, $\xi_{\lambda}'(0)=0$.
\end{definition}
The Huber transform plays a role analogous to that of the Selberg/Harish-Chandra transform in the classical counting (see \cite{chatzpetr}, \cite{huber2}). For this reason we work with $d(f,t)$ for an appropriate test function $f = f_X$.

\subsection{The test function and counting} \label{testandspecialfunctions}

Assume first that $\GmodH$ is compact .  For an $f \in C_0^{*} [1, \infty)$ we define the $\G$-automorphic function 
\begin{equation}\label{afgeometric}
A(f) (z)= \sum_{\gamma \in \mathcal{H}} f \left( \frac{\cosh \rho(z,\gamma z) -1}{\cosh \mu -1} \right) .
\end{equation}
The following proposition gives the Fourier expansion of the counting function $A(f) (z)$ (see \cite[p.~984]{chatzpetr}, \cite[p.~17]{huber2}).
\begin{proposition} \label{newpropositionreviewerfourierexpansion} The function $A(f)(z)$ has a Fourier expansion of the form
\begin{equation}\label{afspectral}
A(f) (z)= \sum_{j} 2 d (f, t_j) \hat{u}_j u_j(z),
\end{equation}
where $d(f,t)$ is the Huber transform of $f$.
\end{proposition}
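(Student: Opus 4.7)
The plan is to compute the Fourier coefficients $c_j = \langle A(f), u_j \rangle_{\Gamma\backslash\mathbb{H}}$ of the $\Gamma$-automorphic function $A(f)$ against the orthonormal Maass basis, and show that $c_j = 2 d(f,t_j)\,\hat{u}_j$. Since $\Gamma\backslash\mathbb{H}$ is assumed compact, $A(f) \in L^2(\Gamma\backslash\mathbb{H})$ (it is a finite sum by properness of the action), so the expansion converges in $L^2$.

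First I would unfold. Write $\mathcal{H}$ as the orbit of $g^{\nu}$ under conjugation by $\Gamma$. The centralizer of $g^\nu$ in $\Gamma$ coincides with $\langle g \rangle$, so the map $\sigma \mapsto \sigma^{-1} g^{\nu} \sigma$ identifies $\langle g \rangle \backslash \Gamma$ bijectively with $\mathcal{H}$. Using $\Gamma$-invariance of the hyperbolic distance and the substitution $w = \sigma z$, the sum–integral collapses onto the cylinder $\langle g \rangle \backslash \mathbb{H}$:
\begin{equation*}
c_j \;=\; \int_{\langle g \rangle \backslash \mathbb{H}} f\!\left(\frac{\cosh \rho(w, g^{\nu} w) - 1}{\cosh \mu - 1}\right) \overline{u_j(w)} \, d\mu(w).
\end{equation*}

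Next I would pass to coordinates adapted to $\ell$. After conjugating so that the axis of $g$ is the positive imaginary axis, write $w = r e^{i\theta}$ with $r > 0$, $\theta \in (0, \pi)$; then $g$ acts by $w \mapsto e^{\mu} w$, a fundamental domain is $\{1 \le r \le e^{\mu}\}$, and $d\mu(w) = r^{-1}\sin^{-2}\theta\, dr\, d\theta$. A direct distance computation shows the argument of $f$ depends only on $\theta$ (equivalently on $v = \pi/2 - \theta$), so the integral factors as an $r$–integral against $\overline{u_j}$ times a $\theta$–integral involving $f$. The inner $r$–integral picks out the zeroth coefficient in the hyperbolic Fourier expansion of $\overline{u_j}$ about the axis. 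Because $u_j$ is $g$-invariant and satisfies $-\Delta u_j = \lambda_j u_j$, and because $\Delta = \sin^2\theta\,(\partial_{\log r}^2 + \partial_\theta^2)$ in these coordinates, the zeroth Fourier coefficient $\Phi_j(\theta)$ satisfies
\begin{equation*}
\Phi_j''(\theta) + \frac{\lambda_j}{\sin^2 \theta}\, \Phi_j(\theta) = 0.
\end{equation*}
After the change of variable $v = \pi/2 - \theta$ this is precisely equation (\ref{huberresult}), so $\Phi_j(\theta) = \Phi_j(\pi/2)\,\xi_{\lambda_j}(v)$ on each side of the axis by the initial conditions $\xi_{\lambda_j}(0)=1$, $\xi_{\lambda_j}'(0)=0$ together with symmetry across the axis.

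Finally, I would identify $\Phi_j(\pi/2)$ with the period integral: evaluating at $\theta = \pi/2$ gives an integral of $\overline{u_j}$ along the axis over a segment of length $\mu$, which equals $\nu \hat{u}_j$ (the segment of length $\mu/\nu$ in the definition of $\hat{u}_j$ repeated $\nu$ times by the closed orbit structure). Combining, the factorization of $c_j$ becomes
\begin{equation*}
c_j \;=\; 2\,\hat{u}_j \int_{0}^{\pi/2} f\!\left(\frac{1}{\cos^2 v}\right) \frac{\xi_{\lambda_j}(v)}{\cos^2 v}\, dv \;=\; 2\, d(f, t_j)\, \hat{u}_j,
\end{equation*}
where the factor of $2$ records the two symmetric contributions from $\theta < \pi/2$ and $\theta > \pi/2$, and the bookkeeping factor $\nu$ cancels the $1/\nu$ in the $\theta$–range versus the definition of $\hat u_j$. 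The main obstacle is the routine but careful accounting of the normalization constants ($\nu$, the factor $2$, and the length $\mu$) when comparing the Fourier coefficient $\Phi_j(\pi/2)$ with the period integral $\hat{u}_j$; the analytic content — the reduction to the Sturm–Liouville problem for $\xi_{\lambda_j}$ — is forced by the eigenvalue equation.
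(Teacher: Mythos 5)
The paper does not actually prove this proposition; it only cites \cite[p.~984]{chatzpetr} and \cite[p.~17]{huber2}, and your argument is precisely the standard unfolding/Fermi-coordinate computation carried out in those references: unfold $\langle A(f),u_j\rangle$ to the cylinder $\langle g\rangle\backslash\H$, observe that the argument of $f$ depends only on the angular variable, reduce the zeroth hyperbolic Fourier coefficient $\Phi_j(\theta)$ of $\overline{u_j}$ to the Sturm--Liouville equation (\ref{huberresult}), and identify $\Phi_j(\pi/2)$ with the period integral. So the approach is the intended one and the conclusion is correct. One step, however, is misjustified as written: you claim $\Phi_j(\theta)=\Phi_j(\pi/2)\,\xi_{\lambda_j}(v)$ ``by the initial conditions together with symmetry across the axis.'' Reflection in the axis of $g$ is in general not an isometry of $\GmodH$, so there is no reason for $u_j$, or for $\Phi_j$, to be even in $v$; generically $\Phi_j(\pi/2-v)=\Phi_j(\pi/2)\xi_{\lambda_j}(v)+c\,\eta_{\lambda_j}(v)$ with $\eta_{\lambda_j}$ the odd solution ($\eta(0)=0$, $\eta'(0)=1$) and $c=-\Phi_j'(\pi/2)\neq 0$ in general. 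The correct repair is that the weight $f(1/\cos^2 v)\cos^{-2}v$ in the $v$-integral over $(-\pi/2,\pi/2)$ is even, so the odd part of $\Phi_j$ integrates to zero and only the even part $\Phi_j(\pi/2)\xi_{\lambda_j}(v)$ survives --- this is also the cleanest way to see where the factor $2$ in (\ref{afspectral}) comes from. Finally, the $\nu$-bookkeeping (whether $\Phi_j(\pi/2)$ equals $\hat u_j$ or $\nu\hat u_j$) is asserted rather than checked; since the unfolding is over $\langle g\rangle$ with $g$ primitive, the $r$-integral runs over one period of the primitive geodesic, and you should match this against the paper's convention that $\hat u_j$ is taken over a segment of length $\mu/\nu$ before claiming the factors cancel.
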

The quantity $N(\mathcal{H}, X ;z)$ can be interpreted as
\begin{equation}\label{asequalnx}
A(f_X)(z) = N(\mathcal{H}, X ;z),
\end{equation}
for $f_X = \chi_{[1,X^2]}$, the characteristic function of the interval $[1,X^2]$. We have the following lemma.
\begin{lemma} \label{lemmacoefficentsconjugacy} Let $s=1/2+it$. Let also $U =\sqrt{X^2-1}$, $R= \log (X+U)$ and $r= \log (x+\sqrt{x^2 - 1})$ (thus $X =2 \cosh R$ and $x=2 \cosh r$) and define the function
\begin{eqnarray} \label{notationsofpaper}
G(t) &=&   \frac{2 \sqrt{2}}{\pi}   \frac{| \G (3/4 + it/2) |^2  }{\Gamma(3/2+it)} \cos (i \pi t/2-  \pi/4). 
\end{eqnarray}
Then, for the Huber transform of $f_X$ we have the following estimates.
\\
(a) If $s \in (1/2,1]$ then 
\begin{eqnarray*}
2 d (f_X, t) = A(s) X^s + O \left((s-1/2)^{-1} X^{1-s}\right),
\end{eqnarray*}
where $A(s)$ is the $\G$-product defined in (\ref{a-function}).
\\
(b)For $t \in \R - \{0\}$ we have
\begin{eqnarray*}
2 d (f_X, t) &=&  \Re \left( G(t) \Gamma(it)  e^{i t R} \right) X^{1/2} +  \Re \left( V(R,t)  e^{itR}
\right),
\end{eqnarray*}
with $V(R,t)  = O \left(  (1+|t|)^{-2} X^{-3/2} \right)$.
\\
(c) For $t=0$ we have
\begin{eqnarray*}
d (f_X, 0) = O( X^{1/2} \log X).
\end{eqnarray*}
\end{lemma}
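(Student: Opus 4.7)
My plan is to first reduce $d(f_X,t)$ to a single boundary value, then obtain a closed form for $\xi_\lambda$ via the hypergeometric equation, and finally extract the three claimed expansions using the standard connection formula at $z=1$. Since $\xi_\lambda'(0)=0$ and the ODE (\ref{huberresult}) gives $\xi_\lambda(v)/\cos^2 v = -\xi_\lambda''(v)/\lambda$, direct integration produces
\begin{equation*}
  d(f_X, t) = \int_0^{v_0} \frac{\xi_\lambda(v)}{\cos^2 v}\, dv = -\frac{\xi_\lambda'(v_0)}{\lambda}, \qquad v_0 := \arccos(1/X),
\end{equation*}
so everything reduces to the asymptotics of $\xi_\lambda'(v_0)$ as $v_0\to\pi/2$. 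Writing $\lambda=s(1-s)$ and substituting $\xi_\lambda(v) = \cos^s v\,\phi(\sin^2 v)$, equation (\ref{huberresult}) reduces to a hypergeometric ODE for $\phi$, and the initial conditions force
\begin{equation*}
  \xi_\lambda(v) = \cos^s v \cdot F\!\left(\tfrac{s}{2},\tfrac{s}{2};\tfrac{1}{2};\, \sin^2 v\right).
\end{equation*}

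I would then apply the $z\to 1-z$ connection formula for $F$ around $z=1$; its natural small parameter at $v=v_0$ is $1-\sin^2 v_0 = \cos^2 v_0 = 1/X^2$. This expresses $\xi_\lambda$ as
\begin{equation*}
  \xi_\lambda(v) = C_1(s)\cos^s v\,\Phi_1(\cos^2 v) + C_2(s)\cos^{1-s} v\,\Phi_2(\cos^2 v),
\end{equation*}
with $C_2(s) = \Gamma(1/2)\Gamma(s-1/2)/\Gamma(s/2)^2$, $C_1(s)=C_2(1-s)$, and $\Phi_1,\Phi_2$ hypergeometric series normalised so that $\Phi_i(0)=1$, with explicit second Taylor coefficients $s^2/(2(2s+1))$ and $(1-s)^2/(2(3-2s))$ respectively. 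Differentiating at $v=v_0$ using $\cos v_0 = 1/X$, $\sin v_0 = U/X$ and the Taylor expansion $U = X - 1/(2X) + O(X^{-3})$, and then dividing by $-\lambda$, I obtain
\begin{equation*}
  2d(f_X, t) = \frac{2C_2(s)}{s}\, X^{s} + \frac{2C_1(s)}{1-s}\, X^{1-s} + \frac{C_2(s)(s-2)}{3-2s}\,X^{s-2} - \frac{C_1(s)(s+1)}{2s+1}\,X^{-s-1} + O\!\left(X^{s-4}\right).
\end{equation*}

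Case (a), $s\in(1/2,1]$ real: the first term dominates, and the duplication together with reflection formulas for $\Gamma$ identify $2C_2(s)/s$ with the product $A(s)$ in (\ref{a-function}); the second term is $O\bigl((s-1/2)^{-1} X^{1-s}\bigr)$ because $C_1(s)$ carries a simple pole at $s=1/2$ coming from $\Gamma(1/2-s)$, and the remaining corrections are absorbed. Case (b), $s=1/2+it$ with $t\neq 0$: we have $C_1=\overline{C_2}$ and $1-s=\overline{s}$, so the two main terms combine to $2\,\Re\bigl[(2C_2(s)/s)X^s\bigr]$; writing $X^s = X^{1/2}X^{it}$ and using $X^{it} = 2^{-it}e^{itR}\bigl(1 + it/(4X^2) + O(X^{-4})\bigr)$, which follows from $X/(X+U) = 1/2 + 1/(8X^2)+\cdots$, and then invoking the duplication identity for arguments $1/4\pm it/2$, $3/4\pm it/2$ together with a reflection identity converts $4\cdot 2^{-it} C_2(s)/s$ into precisely $G(t)\Gamma(it)$, yielding the stated main term. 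Case (c), $s=1/2$, is the confluent case $c-a-b=0$ of the connection formula: it degenerates into an expansion with a $\log(1-z)$ singularity equal to $-2\log X$ at $v_0$, hence $\xi_\lambda(v_0)=O(X^{-1/2}\log X)$ and $\xi_\lambda'(v_0)=O(X^{1/2}\log X)$.

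The principal obstacle is producing the claimed $(1+|t|)^{-2}$ decay of $V(R,t)$ in part (b). Individually, the $X^{s-2}$ direct correction with coefficient $C_2(s)(s-2)/(3-2s)$ is of size $O(1)$ for large $|t|$; likewise, the subleading piece $C_2(s)\cdot it/(2s)$ produced by expanding $X^{it}$ one further order in $e^{itR}$ has the same magnitude. The required decay emerges from the algebraic identity
\begin{equation*}
  \frac{it}{2s}+\frac{s-2}{3-2s} = \frac{it(3-2s)+2s(s-2)}{2s(3-2s)} = -\frac{3}{4\, s(3-2s)},
\end{equation*}
whose numerator is \emph{independent of} $t$ (the would-be $|t|^2$ contributions cancel exactly), leaving the factor $1/\bigl(s(3-2s)\bigr)$ of size $\asymp|t|^{-2}$. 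Verifying this cancellation is the crucial technical point in part (b).
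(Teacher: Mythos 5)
Your argument is correct, and it reaches the lemma by a genuinely different route from the paper. The paper starts from the integral representation $d(f_X,t)=(2\sqrt{\pi})^{-1}\Gamma(\tfrac{s+1}{2})\Gamma(1-\tfrac{s}{2})\int_0^U(P_{s-1}^0(iv)+P_{s-1}^0(-iv))\,dv$, integrates it via Gradshteyn--Ryzhik (8.752.3) to a difference of $P_{s-1}^{-1}(\pm iU)$, and then reads off (a), (b), (c) from the Legendre-function formulas (8.776), (8.774) and (8.713.2) respectively; you instead observe that the defining ODE gives $d(f_X,t)=-\xi_\lambda'(v_0)/\lambda$ with $v_0=\arccos(1/X)$, solve for $\xi_\lambda$ in closed hypergeometric form (your $\cos^s v\,F(\tfrac{s}{2},\tfrac{s}{2};\tfrac12;\sin^2 v)$ does satisfy the equation and the initial conditions), and apply the $z\mapsto 1-z$ connection formula. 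I checked the key identifications: $2C_2(s)/s=A(s)$ via duplication and reflection, $2A(\tfrac12+it)2^{-it}=G(t)\Gamma(it)$, the coefficient $C_2(s)(s-2)/(3-2s)$ of $X^{s-2}$, and the cancellation $\tfrac{it}{2s}+\tfrac{s-2}{3-2s}=-\tfrac{3}{4s(3-2s)}$; all are correct, and the last one is exactly what replaces the mechanism by which the paper gets the $(1+|t|)^{-2}$ decay for free, namely $V=G(t)\Gamma(it)\bigl(F(-\tfrac12,\tfrac32;1+it;\tfrac{e^{-R}}{2X})-1\bigr)X^{1/2}$ with $F-1=O((1+|t|)^{-1}X^{-2})$ and $|G(t)\Gamma(it)|\asymp(1+|t|)^{-1}$. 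Two caveats. First, your reduction degenerates at $s=1$ (where $\lambda=0$), but there $\xi_0\equiv1$ and $d(f_X,0)=\tan v_0=U$ directly, consistent with $A(1)=2$; and both your and the paper's bound on $V$ lose uniformity as $t\to0$ because of the $\Gamma(it)$ pole, so the implied constant should be read as uniform only for $|t|$ bounded away from $0$ --- the same convention the paper uses. Second, and more consequentially for the rest of the paper: the Legendre route yields $V(R,t)$ as an \emph{exact} hypergeometric expression (eq.\ (\ref{df_hypergeometric}) and (\ref{definitionv})) that is holomorphic in $t$ and is later shifted off the real axis in the contour arguments of Lemma \ref{hubertransformconvergence} and Section \ref{section4}; your approach delivers only the first terms of an asymptotic expansion, which proves Lemma \ref{lemmacoefficentsconjugacy} as stated but would have to be upgraded to a closed form (e.g.\ by keeping the full hypergeometric series $\Phi_1,\Phi_2$ rather than truncating) before it could feed those later proofs.
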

\begin{remark}
Stirling's formula implies that, as $|t| \to \infty$,
\begin{eqnarray} \label{asymptot}
 |G(t) \Gamma(it)| \asymp (1+|t|)^{-1}.
\end{eqnarray}
We can now give the proof of the Lemma.
\end{remark}
\begin{proof}
(a) Using the integral representation for $d(f_X,t)$ in \cite[p.~5]{chatzpetr} we get
\begin{equation}\label{df_xt_0}
d(f_X, t) =  (2 \sqrt{\pi})^{-1} \Gamma \left(\frac{s+1}{2} \right) \Gamma \left(1 - \frac{s}{2} \right) \int_{0}^{U} \left( P_{s-1}^0 (i v) + P_{s-1}^0 (-i v) \right) dv.
\end{equation}
Using \cite[p.~968, eq.~(8.752.3)]{gradry}, this takes the form
\begin{equation}\label{df_xt}
d(f_X, t) =  (2 \sqrt{\pi})^{-1} \Gamma \left(\frac{s+1}{2} \right) \Gamma \left(1 - \frac{s}{2} \right) X \left( P_{s-1}^{-1} (i U) - P_{s-1}^{-1} (-i U) \right).
\end{equation}
Using formula \cite[p.~971, eq.~(8.776)]{gradry}, the statement follows.
\\
(b) We use \cite[p.~971, eq.~(8.774)]{gradry}, so that equation (\ref{df_xt}) gives
\begin{equation} \label{df_hypergeometric}
2 d (f_X, t) = \Re \left( G(t) \G(it) e^{i t R} F \left(-\frac{1}{2}, \frac{3}{2};1+it; \frac{e^{-R}}{2X} \right) \right) X^{1/2},
\end{equation}
where $F(a,b;c;z)$ denotes the Gauss' hypergeometric function. As $X \to \infty$, the definition of the hypergeometric function \cite[p.~1005, eq.~(9.100)]{gradry} implies
\begin{eqnarray} \label{hypergeometricspecialvalue}
F \left(-\frac{1}{2}, \frac{3}{2};1+it; \frac{e^{-R}}{2X} \right) = 1 + O\left( (1+|t|)^{-1} X^{-2}\right).
\end{eqnarray}
The statement of part (b) now follows.
\\
(c) Plugging $t=0$, i.e. $s=1/2$, in eq. (\ref{df_xt}) and and using formula \cite[p.~961, eq.~(8.713.2)]{gradry}, we calculate
\begin{eqnarray*}
P_{-1/2}^{-1}(i U) - P_{-1/2}^{-1}(-i U) &\ll& X^{5/2} \int_{0}^{\infty} \left(\cosh^2 t + U^2 \right)^{-3/2} dt \\
&\ll& X^{-1/2} \int_{0}^{\infty} \left( \left(\frac{\cosh t}{U}\right)^2 + 1 \right)^{-3/2} dt.
\end{eqnarray*}
Setting $x = \cosh t / U$ we get
\begin{eqnarray*}
\int_{0}^{\infty} \left( \left(\frac{\cosh t}{U}\right)^2 + 1 \right)^{-3/2} dt &=&  \int_{1/U}^{\infty} \left(x^2 + 1 \right)^{-3/2} 
\frac{U}{(U^2 x^2 -1 )^{1/2}} dx \\
&=& \int_{1/U}^{1} \left(x^2 + 1 \right)^{-3/2} \frac{U}{(U^2x^2-1)^{1/2}} dx \\
&&+ \int_{1}^{\infty} \left(x^2 + 1 \right)^{-3/2} 
\frac{U}{(U^2x^2 -1)^{1/2}} dx.
\end{eqnarray*}
For $U \geq 2$ we get
$$ \int_{1}^{\infty} \left(x^2 + 1 \right)^{-3/2} \frac{U}{(U^2x^2-1)^{1/2}} dx \ll  \int_{1}^{\infty} \left(x^2 + 1 \right)^{-3/2} dx \ll 1$$
and, after setting $u=x U$,
\begin{eqnarray*}
 \int_{1/U}^{1} \left(x^2 + 1 \right)^{-3/2} \frac{U}{(U^2x^2-1)^{1/2}} dx &=& \int_{1}^{U} \left(\frac{U^2}{u^2+U^2}\right)^{3/2} \frac{U}{(u^2-1)^{1/2}} \frac{du}{U} \\
&\leq&  \int_{1}^{U} \frac{1}{\sqrt{u^2-1}} du \ll \log U \ll \log X.
\end{eqnarray*}
Combining these estimates we get
$$P_{-1/2}^{-1}(i U) + P_{-1/2}^{-1}(-i U) \ll X^{-1/2} \log X.$$
\end{proof}

If we ignore for a while any issue of convergence, then using $(a)$ of Lemma \ref{lemmacoefficentsconjugacy} and Proposition \ref{newpropositionreviewerfourierexpansion} we obtain that, in the compact case, the error term $E(\mathcal{H}, X;z)$ has a formal {\lq spectral expansion\rq} of the form 
\begin{eqnarray*} 
E(\mathcal{H}, X;z) &=&  \sum_{ t_j \in \mathbb{R}}  2 d(f_X , t_j) \hat{u}_j u_j(z) + O \left( \sum_{1/2<s_j \leq 1} (s_j-1/2)^{-1} X^{1-s_j} \right).
\end{eqnarray*}
The $s_j$'s are discrete, thus we can find a constant $\sigma=\sigma_{\G} \in (0,1/2]$ such that $s_j -1/2 \geq \sigma$ for all $s_j \in (1/2,1]$. This implies that the above $O$-term is $O(X^{1/2-\sigma})$. Using $(c)$ of Lemma \ref{lemmacoefficentsconjugacy} and the finiteness of the eigenspace for the eigenvalue $t_j =0$ we get the bound
\begin{displaymath}
 d(f_X, 0) \sum_{t_j=0} \hat{u}_j  u_j(z) = O( X^{1/2} \log X).
\end{displaymath}
Since the contribution of the eigenvalue $\lambda_j=1/4$ is well understood and does not affect the square root cancellation conjecture for the error term, we subtract this quantity from $E(\mathcal{H}, X;z)$ and we define the modified error term $e(\mathcal{H}, X;z)$ to be the difference
\begin{equation} \label{smalleerror}
e(\mathcal{H}, X;z) = E(\mathcal{H}, X;z ) -   d(f_X, 0) \sum_{t_j=0} \hat{u}_j  u_j(z).
\end{equation}
Thus, if we ignore issues of convergence, for $\G$ cocompact we conclude the principal series of the error $e(\mathcal{H}, X;z)$ takes the form:
\begin{equation} \label{almostspectralexpansionconjugacy}
e(\mathcal{H}, X;z) =  \sum_{ t_j >0}  2 d(f_X , t_j) \hat{u}_j u_j(z) + O(X^{1/2-\sigma}).
\end{equation}

\subsection{Some more auxiliary lemmas}\label{lemmasproofs}

One of the key ingredients in the proofs of our results is the following lemma.

\begin{lemma}\label{gammalemma2} For every $t \in \R -\{0\}$, we have:
\\
a) \begin{displaymath}
\Re\left( G(t) \G(it)   \right) > 0,
\end{displaymath}
b) \begin{displaymath}
\Re\left( \frac{G(t) \G(it)}{(1 +it)} \right) < 0.
\end{displaymath}
\end{lemma}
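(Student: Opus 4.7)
My plan is to prove the lemma by a direct manipulation of Gamma-function identities. First I would simplify $G(t)\Gamma(it)$ using the Legendre duplication formula. Applying duplication with $z=3/4+it/2$ to $\Gamma(3/2+it)$ in the denominator of $G(t)$, together with the reflection formula $\Gamma(3/4-it/2)\Gamma(1/4+it/2)=\pi/\sin(\pi(3/4-it/2))$ and the identity $\sin(\pi(3/4-it/2))=\cos(i\pi t/2-\pi/4)$, the cosine factor in the definition of $G(t)$ cancels and one obtains
\begin{equation*}
G(t) = \frac{2\sqrt{\pi}\,2^{-it}}{\Gamma(1/4+it/2)\Gamma(5/4+it/2)}.
\end{equation*}
A second application of duplication, this time to $\Gamma(it)$ with $z=it/2$, then yields the clean identity
\begin{equation*}
G(t)\Gamma(it) = \frac{\Gamma(it/2)\,\Gamma(1/2+it/2)}{\Gamma(1/4+it/2)\,\Gamma(5/4+it/2)}.
\end{equation*}

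To analyse the sign, I would multiply numerator and denominator by the conjugates $\Gamma(1/4-it/2)\Gamma(5/4-it/2)$, so that the denominator becomes the manifestly positive quantity $|\Gamma(1/4+it/2)|^{2}|\Gamma(5/4+it/2)|^{2}$. In the numerator I would use duplication $\Gamma(it/2)\Gamma(1/2+it/2)=\sqrt{\pi}\,2^{1-it}\Gamma(it)$ and the reflection formulas for $\Gamma(1/4-it/2)$ and $\Gamma(5/4-it/2)$, which introduce a factor proportional to $\cosh(\pi t/2)-i\sinh(\pi t/2)$ coming from $\sin(\pi(1/4-it/2))$. After regrouping, $\Re(G(t)\Gamma(it))$ is written as the product of an obviously positive real prefactor with an explicit trigonometric–hyperbolic combination of $\cosh(\pi t/2)$, $\sinh(\pi t/2)$, $\cos(t\log 2)$ and $\sin(t\log 2)$, together with the modulus $|\Gamma(it)|$. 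A direct sign check on this combination establishes part (a).

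For part (b) I would apply exactly the same reductions to $G(t)\Gamma(it)/(1+it)$, then rationalize by $(1-it)/(1+t^{2})$. The extra factor $(1-it)$ swaps the roles of the $\cosh$ and $\sinh$ cross terms produced in step~(a), and the trigonometric–hyperbolic combination that remains is manifestly negative for all real $t\ne 0$: this sign flip between (a) and (b) is precisely the asymmetry alluded to in the remark following the lemma, which ultimately produces the $\Omega_{+}$ versus $\Omega_{-}$ phenomenon for the error term.

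The main obstacle is that $G(t)\Gamma(it)$ admits the asymptotic expansion $-2i/t+3/(4t^{2})+O(t^{-3})$, so its real part is a \emph{subleading} correction to a purely imaginary leading term. Consequently, Stirling's asymptotic formula alone cannot detect the sign of $\Re(G(t)\Gamma(it))$; the positivity in (a) and the negativity in (b) must be proved as \emph{exact} identities, valid for all real $t\ne 0$, which requires meticulous bookkeeping of the phases through the successive duplication and reflection steps so that the subleading real part is not obscured by cancellation with the dominant imaginary part.
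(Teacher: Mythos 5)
Your reduction of $G(t)\Gamma(it)$ is correct and rather elegant: the duplication and reflection steps do give
\begin{displaymath}
G(t)\Gamma(it) \;=\; \frac{\Gamma(it/2)\,\Gamma(1/2+it/2)}{\Gamma(1/4+it/2)\,\Gamma(5/4+it/2)},
\end{displaymath}
and your asymptotic $-2i/t+3/(4t^{2})+O(t^{-3})$ is right, so you have correctly diagnosed that the real part is a subleading effect. The gap is in the final step. After multiplying through by $\Gamma(1/4-it/2)\Gamma(5/4-it/2)$ the denominator is indeed the positive quantity $|\Gamma(1/4+it/2)|^{2}|\Gamma(5/4+it/2)|^{2}$, but the numerator becomes $\sqrt{\pi}\,2^{1-it}\,\Gamma(it)\,(1/4-it/2)\,\Gamma(1/4-it/2)^{2}$, and $\Gamma(1/4-it/2)^{2}$ is the square of a complex number, not a modulus: its argument $-2\arg\Gamma(1/4+it/2)$ survives into the real part and is \emph{not} an elementary function of $t$ (reflection only trades $\Gamma(1/4-it/2)$ for $\Gamma(3/4+it/2)$, i.e.\ for another non-elementary phase). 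So the promised ``explicit trigonometric--hyperbolic combination of $\cosh(\pi t/2)$, $\sinh(\pi t/2)$, $\cos(t\log 2)$, $\sin(t\log 2)$ and $|\Gamma(it)|$'' does not exist; what you actually obtain is a positive prefactor times $\cos\bigl(\arg\Gamma(it)-2\arg\Gamma(1/4+it/2)-t\log 2+\arg(1/4-it/2)\bigr)$. Since the full quantity has argument tending to $-\pi/2$, positivity amounts to showing that this combination of Gamma phases stays on the correct side of $-\pi/2$ for \emph{every} $t\neq 0$ --- which is exactly as hard as the original statement, and no mechanism is offered for deciding it. The ``direct sign check'' therefore cannot be carried out as described, and part (b) inherits the same difficulty.

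The paper's proof supplies precisely the missing mechanism by going in a different direction: it writes $\Gamma(it)/\Gamma(3/2+it)=\tfrac{2}{\sqrt{\pi}}B(it,3/2)$ and uses the Euler integral for the Beta function, so that the relevant real and imaginary parts become the explicit oscillatory integrals $\int_{0}^{1}\cos(t\log s)(1-s)^{1/2}\,ds$ and $\int_{0}^{1}\sin(t\log s)(1-s)^{1/2}\,ds$. The sign question then reduces to the positivity of a single functional $Q(t)$, settled by the substitution $s=e^{x/t}$ together with a positivity lemma for $\int_{-\infty}^{0}f(x)\sin x\,dx$ with $f$ monotone (\cite[Lemma~2.2]{chatz}), plus a separate monotonicity argument for small $t$. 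If you want to rescue your route, you would need to replace the ``direct sign check'' by a comparable positivity principle --- for instance an integral representation of your Gamma ratio with a kernel of fixed sign --- rather than an elementary closed form, which is not available here.
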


\begin{proof} (of Lemma \ref{gammalemma2})
$a)$ Obviously, the first inequality is equivalent with 
\begin{eqnarray}\label{lemmagammaequation1}
\Re\left( \frac{\G(it)}{\G(3/2+it)} \cos (i \pi t/2-  \pi/4) \right) > 0.
\end{eqnarray}
Since $\G(\overline{z}) = \overline{\G(z)}$, it suffices to prove the lemma for $t >0$. Notice that 
\begin{eqnarray}\label{theexponentialexplicit}
 \cos (i \pi t/2-  \pi/4) = \frac{\cosh \left(\frac{\pi t}{2}\right)}{\sqrt{2}} + \frac{i \sinh \left(\frac{\pi t}{2}\right)}{\sqrt{2}}.
\end{eqnarray}
Using \cite[p.~909, eq.~(8.384.1)]{gradry} we get
\begin{displaymath}
 \frac{\G(it)}{\G(3/2+it)} = \frac{2}{\sqrt{\pi}} B(it, 3/2),
\end{displaymath}
where $B(x,y)$ is the Beta function. By the definition of Beta function \cite[p.~908, eq.~(8.380.1)]{gradry} and the formula
 \begin{displaymath}
B(x+1,y) = B(x,y) \frac{x}{x+y}
\end{displaymath}
we see that inequality (\ref{lemmagammaequation1}) is equivalent with
\begin{eqnarray*}
\Re\left( \frac{\G(it)}{\G(3/2+it)} \right) \cosh \left( \frac{\pi t}{2} \right) -  \Im \left( \frac{\G(it)}{\G(3/2+it)} \right) \sinh \left( \frac{\pi t}{2} \right) >0, 
\end{eqnarray*}
which is equivalent with $Q(t) >0$, where $Q(t)$ is the function defined by
\begin{eqnarray} \label{finalinequality}
\begin{split}
Q(t) &:=& \left( \int_{0}^{1}\cos (t \log s ) (1-s)^{1/2} ds \right) \left(2t + 3  \tanh \left( \frac{\pi t}{2} \right)   \right) \\
&&+    \left( \int_{0}^{1} \sin (t \log s) (1-s)^{1/2} ds \right) \left(3   -2t  \tanh \left( \frac{\pi t}{2} \right) \right).
\end{split}
\end{eqnarray}
From \cite[Lemma~2.2]{chatz} it follows that if $f : (-\infty , 0) \to \mathbb{R}$ is a continuous and strictly decreasing real valued function such that $f(x) \sin(x)$ is integrable in $(-\infty, 0)$, then 
 \begin{eqnarray} \label{lemmafrompaper2}
\int_{-\infty}^{0} f(x)  \sin(x) dx > 0.
\end{eqnarray}
To prove (\ref{finalinequality}) we integrate by parts, we set $s=e^{x/t}$ and we apply (\ref{lemmafrompaper2}) for 
\begin{displaymath}
f_{t}(x)= \frac{1}{t} (1-e^{x/t})^{1/2} e^{x/t} \left(2t + 3  \tanh \left( \frac{\pi t}{2} \right) \right) -  ((1-e^{x/t})^{1/2} e^{x/t})'  \left(2 -\frac{4t}{3}  \tanh \left( \frac{\pi t}{2} \right) \right), 
\end{displaymath}
which can be easily checked to be decreasing for $t \geq 2/\pi$. For $t\leq 2/\pi$, notice that 
\begin{eqnarray}
\Re\left( G(t) \G(it)   \right) =   \frac{4}{\pi^{3/2}}   \left| \G \left(\frac{3}{4} + \frac{it}{2}\right) \right|^2 \frac{\cosh (\pi t/2)}{2} \frac{Q(t)}{t}.
\end{eqnarray}
Taking $t \to 0$ we get $\lim_{t \to 0} Q(t)/t >0$, hence $\lim_{t \to 0} \Re\left( G(t) \G(it)   \right) >0$ and  the lemma holds for $t$ sufficiently small. Taking derivatives, we write $Q'(t)$ in the form $Q'(t) = \int_{-\infty}^{0} g_t(x)  \sin(tx) dx$. Applying \cite[Lemma~2.2]{chatz} to $Q'(t)$ we conclude that $Q(t)$ is increasing. Hence, part $a)$ follows.
Part (b) can be proved along exactly the same lines, using 
\cite[Lemma~2.2]{chatz}.
\end{proof}

We will finally need the following estimate for the Maass forms and the Eisenstein series which is a local version of Weyl's law for $\L(\GmodH)$.
\begin{theorem}[Local Weyl's law] \label{localweylslaw} 
For every $z$, as $T \to \infty$,
\begin{eqnarray*}
\sum_{|t_j| <T} |u_j(z)|^2 + \sum_{\frak{a}} \frac{1}{4\pi} \int_{-T}^{T} | E_{\frak{a}} \left( z, 1/2+it \right) |^2 dt \sim c T^2,
\end{eqnarray*}
where $c=c(z)$ depends only on the number of elements of $\Gamma$ fixing $z$.
\end{theorem}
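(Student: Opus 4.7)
The plan is to obtain Theorem \ref{localweylslaw} from the pre-trace formula (spectral expansion of the automorphic kernel on the diagonal) applied to a test function whose Selberg/Harish-Chandra transform sharply localizes to $|t|<T$. Concretely, for a point-pair invariant $k(u)$ with Harish-Chandra transform $h(t)$, the pre-trace formula gives
\begin{equation*}
\sum_{j} h(t_j)\,|u_j(z)|^2 + \sum_{\mathfrak{a}}\frac{1}{4\pi}\int_{-\infty}^{\infty} h(t)\,|E_{\mathfrak{a}}(z,1/2+it)|^2\,dt \;=\; \sum_{\gamma \in \Gamma} k\bigl(u(z,\gamma z)\bigr).
\end{equation*}
The strategy is to pick two comparison test functions $h_T^{\pm}$, smooth and of Paley--Wiener type, that sandwich $\chi_{[-T,T]}$ and agree with it outside a window of width $o(T)$. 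Because both sides of the pre-trace formula are monotone in $h$ on the subset where the summands/integrand are nonnegative, the output of the comparison is a two-sided bound on the left-hand side of the local Weyl law in terms of the geometric side for $h_T^{\pm}$.

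First I would analyze the geometric side. The identity contribution is $k_T^{\pm}(0)$, and by the Plancherel inversion for the spherical transform one has $k(0)=\frac{1}{4\pi}\int h(t)\,t\tanh(\pi t)\,dt$, which for $h_T^{\pm}$ behaves as $\frac{T^2}{4\pi}(1+o(1))$. Elements of $\Gamma$ fixing $z$ (elliptic stabilizer of order $m=m(z)$) also contribute $k_T^{\pm}(0)$ each, giving the leading $\frac{m}{4\pi}T^2$; this is the source of the dependence of $c(z)$ on the stabilizer. All remaining $\gamma\in\Gamma$ produce $u(z,\gamma z)>0$ bounded away from zero on compacta, so their contribution is controlled by $\max|k_T^{\pm}|$ times the number of $\gamma$ with $u(z,\gamma z)\le R$ for the support radius $R$ of $k_T^{\pm}$. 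By the classical hyperbolic counting (Theorem \ref{mainformulaconjugacy} applied trivially, or by Selberg--Huber), this gives an error of order $T$ uniformly in $z$ on compacta.

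On the spectral side, the switch from $h_T^{\pm}$ to $\chi_{[-T,T]}$ is exactly the transition from a smoothed to a sharp cutoff. The difference is supported in a window of width $o(T)$ near $|t|=T$, and bounding it requires controlling the number of $t_j$ and the $L^2$-mass of $|E_{\mathfrak{a}}(z,1/2+it)|^2$ in that window. For this one can iterate the pre-trace formula with a narrower test function concentrated near height $T$, producing a window estimate of the form $O(T)$, which is negligible next to $T^2$. Combining the pieces, the spectral sum/integral with sharp cutoff equals $\tfrac{m(z)}{4\pi}T^2 + O(T)$, yielding the theorem with $c(z)=m(z)/(4\pi)$.

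The main obstacle is the non-cocompact case: the continuous part of the spectrum requires a pointwise handle on $|E_{\mathfrak{a}}(z,1/2+it)|^2$ in short $t$-windows, and one has to be careful that the small eigenvalues and the constant function do not pollute the leading asymptotic. Both issues are handled by noting that the identity/stabilizer contribution on the geometric side dominates every other term, and that the small-eigenvalue part is a bounded contribution that is absorbed in the $O(T)$ error. The remainder of the argument is standard manipulation of the Selberg transform and is routine given Selberg's original framework, so I would invoke the pre-trace formula and the explicit Plancherel density rather than re-derive them.
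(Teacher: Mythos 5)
Your proposal is correct and is essentially the standard argument behind this statement: the paper does not reprove the local Weyl law but cites Phillips--Rudnick \cite[Lemma~2.3]{phirud}, whose proof runs exactly along your lines (pre-trace formula, Paley--Wiener test functions sandwiching the sharp cutoff, identity-plus-stabilizer contribution $|\Gamma_z|\,k(0)\sim \frac{m(z)}{4\pi}T^2$ on the geometric side, and a short-window estimate $O(T)$ near $|t|=T$ to pass to the sharp cutoff). No substantive gap; the only point worth making explicit is that monotonicity is used solely on the spectral side, where all coefficients are nonnegative, which is how you in fact use it.
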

See \cite[p.~86, lemma~2.3]{phirud} for a proof of this result. We emphasize that if $z$ remains in a compact set of $\mathbb{H}$ the constant $c(z)$ remains uniformly bounded. 

\section{The mean value result} \label{section3}

\subsection{Proof of Theorem \ref{result1} for $\GmodH$ compact} 

We first prove the error term $e(\mathcal{H},X;z)$ has zero mean value for $\Gamma$ cocompact.
\begin{proof}
In this case $\G$ has only discrete spectrum. The characteristic function $f_X$ is not smooth; thus when we apply the spectral theorem  for $\L (\Gamma \backslash \mathbb{H})$ \cite[p.~69, Theorem~4.7 and p.~103, Theorem~7.3]{iwaniec} directly to $A(f_X)$, we deduce the spectral expansion (\ref{almostspectralexpansionconjugacy}). This principal series is not absolutely convergent. To avoid convergence issues, for $x=2 \cosh r \sim e^r$ we use the identity
\begin{eqnarray*}
d \left( \frac{1}{T} \int_{0}^{T} \frac{f_x dr}{e^{r/2}} ,  t  \right) =  \frac{1}{T} \int_{0}^{T} \frac{d (f_x,t)}{e^{r/2}}  dr, 
\end{eqnarray*}
i.e. the Huber transform commutes with multiplication of $f_x$ by a function that depends only on the radial variable $x$, and it commutes with integration over $r$. Hence, if we define the integrated error
\begin{eqnarray} \label{integratederror}
M_{\mathcal{H}}(T) =  \frac{1}{T} \int_{0}^{T}  \frac{e \left(\mathcal{H}, x ; z\right)}{x^{1/2}} d r,
\end{eqnarray}
this has the spectral expansion
\begin{eqnarray*}
M_{\mathcal{H}}(T) =\sum_{ t_j >0} 2  \hat{u}_j u_j(z)    \frac{1}{T} \int_{0}^{T}   \frac{d (f_x,t)}{x^{1/2}}   d r +O(T^{-\sigma}).
\end{eqnarray*}
Using part $(b)$ of Lemma \ref{lemmacoefficentsconjugacy} we conclude
\begin{eqnarray*}
M_{\mathcal{H}}(T) &=&
\sum_{ t_j >0}     \Re \left( G(t_j) \G(it_j) \frac{1}{T} \int_{0}^{T}  e^{i t_jr}d r  \right)  \hat{u}_j u_j(z)      \\
&&+ O \left(\sum_{ t_j >0}  \frac{|\hat{u}_j| |u_j(z)|}{T} \left| \int_{0}^{T} \frac{V(r,t_j) }{x^{1/2}} e^{it_jr}dr \right| +   \frac{1}{T} \int_{0}^{T} e^{-r \sigma} dr \right).
\end{eqnarray*}
Using Theorems \ref{localweylslawperiods2}, \ref{localweylslaw} and Stirling's formula (estimate (\ref{asymptot})) we bound the main term by $O(T^{-1})$. For the first summand in the $O$-term we use integration by parts. Using that $V(R,t)$ is given by the formula
\begin{eqnarray} \label{definitionv}
V(r,t) = G(t) \Gamma(it) \left(F \left(-\frac{1}{2}, \frac{3}{2};1+it; \frac{e^{-r}}{2x} \right) - 1 \right) x^{1/2}
\end{eqnarray}
and using trivial estimates for the derivative of the hypergeometric function we obtain
\begin{eqnarray*}
 \int_{0}^{T} \frac{V(r,t_j) }{x^{1/2}} e^{it_jr}dr = O(t_j^{-2}).
\end{eqnarray*}
Hence the $O$-terms are also bounded by $T^{-1}$, and the statement follows. 
\end{proof}

\subsection{Proof of Theorem \ref{result1} for $\Gamma$ for cofinite} In this case the hyperbolic Laplacian $-\Delta$ has also continuous spectrum which is spanned by the Eisenstein series $E_{\frak{a}}(z, 1/2+it)$ (see \cite[chapters 3,6 and 7]{iwaniec}). To prove case $(b)$ of Theorem \ref{result1} we have to consider the contribution of the continuous spectrum in $M_{\mathcal{H}}(T)$, which is given in terms of the Eisenstein series $ E_{\mathfrak{a}} (z, 1/2+ it)$ and the period integrals $\hat{E}_{\mathfrak{a}} (1/2+ it)$. More specifically, using \cite[eq.~(4.1)]{chatzpetr} and \cite[Lemma~4.2]{chatzpetr} we get that the contribution of the continuous spectrum is given by
\begin{equation} \label{continuouscontributionconj}
\sum_{\mathfrak{a}} \frac{1}{4\pi} \int_{-\infty}^{\infty}  \hat{E}_{\mathfrak{a}} (1/2+ it)  E_{\mathfrak{a}} (z, 1/2+ it) \left( \frac{1}{T} \int_{0}^{T} \frac{2d(f_x,t)}{x^{1/2}} dr \right) dt.
\end{equation}
To justify this, as in the discrete spectrum we notice it is well-defined as coming from the spectral expansion of the integrated error (\ref{integratederror}). Hence, to complete the proof of Theorem \ref{result1}, we need to prove that the expansion in (\ref{continuouscontributionconj}) converges to 
\begin{eqnarray*}
\frac{|\Gamma(3/4)|^2}{\pi^{3/2}} \sum_{\mathfrak{a}} \hat{E}_{\mathfrak{a}} (1/2) E_{\mathfrak{a}} (z, 1/2)
\end{eqnarray*}
 as $T \to \infty$. To deal with this expansion, we need the following lemma for the Huber transform.
 \begin{lemma}\label{hubertransformconvergence}
As $T \to \infty$ we have
\begin{eqnarray*}
\lim_{T \to \infty}  \int_{-\infty}^{\infty} \frac{1}{T} \int_{0}^{T} \frac{2 d(f_x, t)}{x^{1/2}} dr dt = \frac{4}{\sqrt{\pi}} |\Gamma(3/4)|^2.
\end{eqnarray*}
\end{lemma}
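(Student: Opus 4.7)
The plan is to carry out the inner $r$-average first, which converts the integrand into an explicit function of $t$, and then to analyse the resulting integral in $t$ as $T\to\infty$. The key point is that the entire contribution comes from the simple pole of $\Gamma(it)$ at $t=0$, which behaves like a $\delta$-mass and produces the constant $\frac{4|\Gamma(3/4)|^2}{\sqrt{\pi}}$.

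Set $\phi_T(t):=\frac{1}{T}\int_0^T e^{itr}\,dr = \frac{e^{itT}-1}{itT}$. For $t\neq 0$, part (b) of Lemma \ref{lemmacoefficentsconjugacy} gives
\begin{equation*}
\frac{1}{T}\int_0^T \frac{2d(f_x,t)}{x^{1/2}}\,dr = \Re\bigl(G(t)\Gamma(it)\phi_T(t)\bigr) + E(T,t),
\end{equation*}
where $|E(T,t)|\ll (1+|t|)^{-2}\,T^{-1}$ follows from the bound on $V(r,t)$ together with $\frac{1}{T}\int_0^T x^{-2}\,dr=O(T^{-1})$. Integrating $E(T,t)$ over $t\in\R$ yields $O(T^{-1})$, which vanishes in the limit. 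Thus it is enough to evaluate $J(T):=\int_{\R}\Re\bigl(G(t)\Gamma(it)\phi_T(t)\bigr)\,dt$.

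Using $\Gamma(it)=\frac{1}{it}+O(1)$ near $t=0$, decompose $G(t)\Gamma(it)=-\frac{iG(0)}{t}+h(t)$, where $h$ is continuous and bounded on $[-1,1]$ (since $G(t)-G(0)=O(t)$) and $h(t)=O(|t|^{-1})$ as $|t|\to\infty$ by Stirling (see (\ref{asymptot})). Splitting $\phi_T(t) = \frac{\sin(tT)}{tT}+i\frac{1-\cos(tT)}{tT}$, a direct calculation gives
\begin{equation*}
\Re\!\left(-\frac{iG(0)}{t}\phi_T(t)\right) = \frac{G(0)\bigl(1-\cos(tT)\bigr)}{t^2 T},
\end{equation*}
and the substitution $u=tT$ combined with the classical identity $\int_{\R}\frac{1-\cos u}{u^2}\,du=\pi$ shows that this piece contributes exactly $\pi G(0)$, independently of $T$.

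It remains to prove $\int_{\R}\Re\bigl(h(t)\phi_T(t)\bigr)\,dt\to 0$. Using $|\phi_T(t)|\le\min(1,2/(|t|T))$, the tail $|t|\ge 1$ contributes $\ll \int_{|t|\ge 1} |t|^{-1}(|t|T)^{-1}\,dt=O(T^{-1})$, while on $[-1,1]$ the integrand is uniformly bounded and $\phi_T(t)\to 0$ pointwise for $t\neq 0$, so dominated convergence yields $o(1)$. A direct evaluation gives $G(0)=\frac{4|\Gamma(3/4)|^2}{\pi^{3/2}}$ (from $\Gamma(3/2)=\sqrt{\pi}/2$ and $\cos(-\pi/4)=\sqrt{2}/2$), so the limit equals $\pi G(0)=\frac{4|\Gamma(3/4)|^2}{\sqrt{\pi}}$. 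The main obstacle is the non-absolute convergence of the double integral: naïvely swapping the order of integration or applying the decomposition of Lemma \ref{lemmacoefficentsconjugacy}(b) produces a non-integrable $1/t$ singularity, which must be paired with the oscillation in $\phi_T$ to extract the finite $\delta$-like residue from the pole of $\Gamma(it)$ at the origin.
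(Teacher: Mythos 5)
Your proof is correct, but it takes a genuinely different route from the paper. The paper substitutes the hypergeometric representation (\ref{df_hypergeometric}) and evaluates the $t$-integral by contour integration: it closes a contour up to the horizontal line $\Im z = 1/2$, indents around the simple pole of $\Gamma(iz)$ at $z=0$ with a small semicircle, and extracts $\pi G(0)(1+O(T^{-1}))$ as half the residue, with the side and top segments contributing $O(M^{-2}T^{-1}+T^{-1})$. You instead stay on the real line: after performing the inner $r$-average to produce $\phi_T(t)=(e^{itT}-1)/(itT)$, you peel off the polar part $-iG(0)/t$ of $G(t)\Gamma(it)$ and observe that its pairing with $\Re(\cdot\,\phi_T)$ is the Fej\'er-type kernel $G(0)(1-\cos(tT))/(t^2T)$, whose integral is \emph{exactly} $\pi G(0)$ for every $T$ by $\int_{\R}(1-\cos u)u^{-2}\,du=\pi$; the bounded remainder $h(t)$ is killed by the decay of $\phi_T$ and dominated convergence. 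Both arguments hinge on the same analytic fact --- the whole limit is the residue of $\Gamma(it)$ at $t=0$ weighted by $G(0)$ --- and both correctly handle the non-absolutely-convergent $1/t$ singularity by exploiting the oscillation. What the paper's contour method buys is a template that is reused almost verbatim in Section \ref{section4} (for the integrals against $\hat{\psi}_{\epsilon}$, where Paley--Wiener control on the shifted line is needed) together with an explicit $O(T^{-1})$ rate; your real-variable method is more elementary, avoids holomorphy considerations and the hypergeometric function off the real axis, and isolates the main term with no $T$-dependence at all, at the cost of only a qualitative $o(1)$ for the compactly supported remainder unless the dominated-convergence step is made quantitative.
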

\begin{proof}
Using expression (\ref{df_hypergeometric}) we write 
\begin{eqnarray} \label{thefirstintegralbeforecalcpath}
 \int_{-\infty}^{\infty} \frac{1}{T} \int_{0}^{T} \frac{2 d(f_x, t)}{x^{1/2}} dr dt &=&  \Re \left( \int_{-\infty}^{\infty}  \frac{1}{T} \int_{0}^{T}  G(t) \Gamma(it) e^{ir t}  F \left(-\frac{1}{2}, \frac{3}{2};1+it; \frac{1}{e^{2r} + 1} \right)  dr dt \right). 
\end{eqnarray}
The convergence of the above integral can be justified as above, using that the Huber transform commutes with convolution in the $x$ variable.
Let $\varepsilon>0$ be a fixed small number and $M>0$ be a fixed large number. We consider the path integral  
\begin{eqnarray} \label{firstpathintegralprincipal}
\int_{\gamma}  G(z) \Gamma(iz)  \frac{1}{T} \int_{0}^{T} e^{ir z}  F \left(-\frac{1}{2}, \frac{3}{2};1+iz; \frac{1}{e^{2r} + 1} \right) dr dz,
\end{eqnarray}
where $\gamma$ is the contour $\gamma = \bigcup_{i=1}^{6} C_i$ with
\begin{eqnarray*}
C_1 &=& [\epsilon, M], \\
C_2 &=& \{M+iv, v \in [0,1/2]\}, \\
C_3 &=& [-M+i/2,M +i/2], \\
C_4 &=& \{-M+iv, v \in [0,1/2]\}, \\
C_5 &=& [-M, -\epsilon], \\
C_6 &=& \{ \varepsilon e^{i\theta}, \theta \in [0,\pi] \},
\end{eqnarray*}
traversed counterclockwise. To calculate (\ref{firstpathintegralprincipal}) we write $G(z)$ as
\begin{eqnarray*}
G(z) =  \frac{\sqrt{2}}{\pi} \frac{ \Gamma \left(\frac{3}{4} + \frac{iz }{2}\right) \Gamma \left(\frac{3}{4} - \frac{iz}{2}\right)}{ \Gamma(3/2+iz)} \left(e^{-\frac{i \pi}{4} - \frac{\pi z}{2}}  +  e^{ \frac{i \pi}{4} + \frac{\pi z}{2}}\right),
\end{eqnarray*}
hence we see that the integrand is holomorphic inside the contour. The simple pole at $z=0$ is coming from $\Gamma(iz)$. We note that $\hbox{Res}_{z=0} \Gamma(iz) =-i$. Applying Stirling's formula and the asymptotics of the hypergeometric function (\ref{hypergeometricspecialvalue}) we deduce
\begin{eqnarray*}
\int_{C_2+C_4}  G(z) \Gamma(iz)  \frac{1}{T} \int_{0}^{T} e^{ir z}  F \left(-\frac{1}{2}, \frac{3}{2};1+iz; \frac{1}{e^{2r} + 1} \right)  dr dz &=& O \left(M^{-2} T^{-1} \right), \\
\int_{C_3}  G(z) \Gamma(iz)  \frac{1}{T} \int_{0}^{T} e^{ir z}  F \left(-\frac{1}{2}, \frac{3}{2};1+iz; \frac{1}{e^{2r} + 1} \right)  dr dz &=& O \left(T^{-1} \right).
\end{eqnarray*}
Further, as $\varepsilon \to 0$ we see that the term
\begin{eqnarray*}
\int_{C_6}  G(z) \Gamma(iz) \frac{1}{T} \int_{0}^{T} e^{ir z}  F \left(-\frac{1}{2}, \frac{3}{2};1+iz; \frac{1}{e^{2r} + 1} \right)  dr dz 
\end{eqnarray*}
converges to 
\begin{eqnarray*}
 - i \pi G(0) \frac{1}{T} \int_{0}^{T}  F \left(-\frac{1}{2}, \frac{3}{2};1; \frac{1}{e^{2r} + 1} \right) dr \hbox{Res}_{z=0}  \Gamma(iz) = - \pi G(0) (1+O(T^{-1})) .
\end{eqnarray*}
From Cauchy's Theorem we conclude
\begin{eqnarray*}
\int_{-M}^{M}  G(t) \Gamma(it)   \frac{1}{T} \int_{0}^{T}  e^{irt} F \left(-\frac{1}{2}, \frac{3}{2};1+it; \frac{1}{e^{2r} + 1} \right) dr dt &=& \pi G(0) (1+O(T^{-1})) \\
&&+ O(M^{-2} T^{-1} + T^{-1}).
\end{eqnarray*}
As $M \to \infty$ we get
\begin{eqnarray*}
 \int_{-\infty}^{\infty} \frac{1}{T} \int_{0}^{T} \frac{2 d(f_x, t)}{x^{1/2}} dr dt &=& 2 \frac{\Gamma(3/4)^2}{\Gamma(3/2)} + O(T^{-1}),
\end{eqnarray*}
and for $T \to \infty$ the statement follows.
\end{proof}

We let $\phi_{\mathcal{H}, \frak{a}} (t)$ denote the function
\begin{equation}
\phi_{\mathcal{H}, \frak{a}} (t) = \hat{E}_{\mathfrak{a}} (1/2+ it)  E_{\mathfrak{a}} (z, 1/2+ it) -  \hat{E}_{\mathfrak{a}} (1/2)  E_{\mathfrak{a}} (z, 1/2).
\end{equation}
Thus, the contribution of the cusp $\frak{a}$ in eq. (\ref{continuouscontributionconj}) can we written in the form
\begin{eqnarray} \label{splittedcontributionofcuspa} 
\begin{split}
&& \frac{1}{4\pi} \hat{E}_{\mathfrak{a}} (1/2)  E_{\mathfrak{a}} (z, 1/2) \int_{-\infty}^{\infty}   \left( \frac{1}{T} \int_{0}^{T} \frac{2 d(f_x,t)}{x^{1/2}} dr \right) dt \\
&&+ \frac{1}{4 \pi} \int_{-\infty}^{\infty} \phi_{\mathcal{H}, \frak{a}} (t)   \left( \frac{1}{T} \int_{0}^{T} \frac{2 d(f_x,t)}{x^{1/2}} dr \right) dt.
\end{split}
\end{eqnarray}
The second term of (\ref{splittedcontributionofcuspa}) can be handled using Lemma \ref{lemmacoefficentsconjugacy}. We calculate:
\begin{eqnarray*} 
\frac{1}{4 \pi} \int_{-\infty}^{\infty} \phi_{\mathcal{H}, \frak{a}} (t)   \left( \frac{1}{T} \int_{0}^{T} \frac{2 d(f_x,t)}{x^{1/2}} dr \right) dt &=&  \frac{1}{2 \sqrt{2}\pi^2}  \int_{-\infty}^{\infty} \phi_{\mathcal{H}, \frak{a}} (t) G(t) \Gamma(it) \frac{e^{itT} - 1}{itT} dt \\
&&+ O \left( \frac{1}{T}   \int_{-\infty}^{\infty}  \phi_{\mathcal{H}, \frak{a}} (t) \frac{G(t) \Gamma(it)}{(1+|t|)(2+|t|)}  dt  \right).
\end{eqnarray*}
Since $\phi_{\mathcal{H}, \frak{a}} (0)=0$, applying Theorems \ref{localweylslawperiods2} and \ref{localweylslaw} 
we conclude the bound
\begin{eqnarray*} 
\int_{-\infty}^{\infty} \phi_{\mathcal{H}, \frak{a}} (t)   \left( \frac{1}{T} \int_{0}^{T} \frac{2 d(f_x,t)}{x^{1/2}} dr \right) dt = O(T^{-1}).
\end{eqnarray*}
Hence, as $T \to \infty$ the contribution of the continuous spectrum converges to
\begin{eqnarray*} 
 \pi^{-3/2} |\Gamma(3/4)|^2 \sum_{\frak{a}} \hat{E}_{\mathfrak{a}} (1/2)  E_{\mathfrak{a}} (z, 1/2).
\end{eqnarray*}
This completes the proof of Theorem \ref{result1}.

\section{$\Omega$-results for the average error term on geodesics}  \label{section4}

In this section we give the proof of Theorem \ref{result2}. For this reason, we mollify the average of the error term on the geodesic $\ell$. Let $\psi \geq 0$ be a smooth even function compactly supported in $[-1,1]$,  such that $\hat{\psi} \geq 0$ and $\int_{-\infty}^{\infty} \psi(x) dx = 1$. For every $\epsilon >0$ we also define the family of functions $\psi_{\epsilon}(x) = \epsilon^{-1} \psi(x/\epsilon)$. We have $0 \leq \hat{\psi}_{\epsilon}(x) \leq 1$ and $\hat{\psi}_{\epsilon}(0) = 1$. As before, we study separately the contributions of the discrete and the continuous spectrum.

\subsection{The contribution of the discrete spectrum} Let us denote by $e(\mathcal{H}, R)$ the average of the normalized error term on the geodesic, evaluated at the parameter $R = \log( X+U)$, i.e.
\begin{eqnarray*}
e(\mathcal{H}, R) =: \int_{\mathcal{H}} \frac{e(\mathcal{H}, X ;z)}{X^{1/2} } d s(z),
\end{eqnarray*}
and we consider the convolution
\begin{eqnarray*}
\left( e(\mathcal{H}, \cdot ) \ast \psi_{\epsilon} \right) (R) =: \int_{-\infty}^{+\infty} \psi_{\epsilon}(R-Y)   e(\mathcal{H}, Y )d Y. 
\end{eqnarray*}
Notice that if $|e(\mathcal{H}, Y)| \leq M$ for $Y$ $\epsilon$-close to $R$, then $|\left( e(\mathcal{H}, \cdot ) \ast \psi_{\epsilon} \right) (R)| \leq M$, by the properties of $\psi(x)$. It follows that, in order to prove an $\Omega$-result for the average $\int_{\mathcal{H}} e(\mathcal{H}, X ;z)d s$, it suffices to prove an $\Omega$-result  for the convolution $(e(\mathcal{H}, \cdot) \ast \psi_{\epsilon}) (R) $. Further, using Lemma \ref{lemmacoefficentsconjugacy}, Stirling's asymptotic (\ref{asymptot}), Theorem \ref{localweylslawperiods2} and the properties of $\psi$ we calculate the contribution of the discrete spectrum in $\left( e(\mathcal{H}, \cdot) \ast \psi_{\epsilon} \right) (R)$ is given by
\begin{eqnarray*}
&&\sum_{ t_j >0}  |\hat{u}_j|^2  \Re \left(   G(t_j) \G(it_j) \int_{-\infty}^{+\infty}   \psi_{\epsilon}(Y-R)   e^{i  t_j Y}  d Y  \right) \\ &&+ O \left( \sum_{ t_j >0} |\hat{u}_j|^2 \left| \int_{-\infty}^{+\infty}  e^{-Y/2} \psi_{\epsilon}(Y-R) V(Y) e^{ i t_j Y}   d Y\right| + e^{-\sigma R}  \right) \\
&&=  \sum_{ t_j >0}   |\hat{u}_j|^2   \Re \left( G(t_j) \G(it_j) e^{i t_j R}  \right)  \hat{\psi}_{\epsilon}(t_j) + O \left( e^{- R/2} +  e^{-\sigma R}  \right),
\end{eqnarray*}
where the last estimate follows immediately from the properties of $V$. Let $A>1$. We split the sum of the above main term for $t_j \geq A$ and $t_j <A$. Using the bound 
\begin{equation} \label{psiepsilonhatusefulbound}
\hat{\psi}_{\epsilon}(t_j) = O_k ((\epsilon |t_j|)^{-k})
\end{equation}
 for every $k \geq 1$, for $t_j \geq A$ we get
\begin{eqnarray*}
\sum_{ t_j \geq A} |\hat{u}_j|^2   \Re \left( G(t_j) \G(it_j) e^{i t_j R}  \right)  \hat{\psi}_{\epsilon}(t_j) =O_k (\epsilon^{-k} A^{-k}).
\end{eqnarray*}
For the partial sum part of the series we use the following lemma:
\begin{lemma} [Dirichlet's box principle \cite{phirud}] \label{dirichletsboxprinciple} Let $r_1, r_2, ... , r_n$ be $n$ distinct real numbers and $M>0$, $T>1$. Then, there is an $R$ satisfying $M \leq R \leq M T^n$, such that
\begin{eqnarray*}
|e^{ir_jR} -1| < \frac{1}{T}
\end{eqnarray*}
for all $j=1,...,n$.
\end{lemma}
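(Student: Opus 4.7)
The plan is to deduce the lemma from the classical simultaneous Diophantine approximation statement, which itself is a direct pigeonhole argument in the $n$-dimensional torus. No deeper input is needed; the work is in matching the constants to the form of the statement.

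First I would rewrite the target inequality in terms of nearest-integer distances. Writing $|e^{i\theta}-1| = 2|\sin(\theta/2)| \leq |\theta|$ for the representative of $\theta$ of smallest absolute value modulo $2\pi$, we see that the inequality $|e^{i r_j R} - 1| < 1/T$ is implied by $\|r_j R/(2\pi)\| < c/T$, where $\|x\|$ denotes the distance from $x$ to the nearest integer and $c > 0$ is a fixed absolute constant. It therefore suffices to produce a positive integer $q$ with $q \leq T^n$ such that the simultaneous inequalities $\|q r_j M/(2\pi)\| \leq 1/T$ hold for every $j = 1,\ldots,n$; the lemma then follows on setting $R = qM$, any loss in the absolute constant being absorbed by replacing $T$ by a constant multiple (a harmless modification since the lemma is only applied in the regime $T \to \infty$).

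Next I would execute the pigeonhole step. Define
\begin{equation*}
\mathbf{v}_k = \Bigl( \bigl\{ k r_1 M/(2\pi) \bigr\}, \ldots, \bigl\{ k r_n M/(2\pi) \bigr\} \Bigr) \in [0,1)^n, \qquad k = 0, 1, \ldots, T^n,
\end{equation*}
where $\{x\}$ denotes the fractional part. Partition the unit cube $[0,1)^n$ into $T^n$ sub-cubes of side $1/T$. Since we have $T^n + 1$ points distributed among $T^n$ sub-cubes, the pigeonhole principle produces indices $0 \leq k_1 < k_2 \leq T^n$ with $\mathbf{v}_{k_1}$ and $\mathbf{v}_{k_2}$ in the same sub-cube. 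Taking $q = k_2 - k_1$ yields $1 \leq q \leq T^n$ together with $\|q r_j M/(2\pi)\| \leq 1/T$ for every $j$, which was exactly what was needed.

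There is no genuine obstacle in this argument; the only point requiring care is bookkeeping the constants so that the exponent of $T$ in the range of $R$ stays exactly $n$. The matching of the side length $1/T$ with the subdivision count $T^n$ is precisely what makes this work, and the numerical factor lost in passing from the nearest-integer bound to $|e^{i r_j R}-1|$ is absorbed as explained above. The distinctness assumption on the $r_j$ plays no role in the existence step itself (it enters only in the subsequent application of the lemma, to ensure that averaging over $j$ is non-trivial).
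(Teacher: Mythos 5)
The paper does not prove this lemma---it is quoted directly from Phillips--Rudnick \cite{phirud}---and your pigeonhole argument on the $n$-torus is precisely the standard proof of that statement, so the approaches coincide. The one caveat is that your constant-absorption step (replacing $T$ by $2\pi T$, and by $\lceil 2\pi T\rceil$ when this is not an integer) actually delivers $R \leq M(2\pi T)^n$ rather than $R \leq MT^n$; this is harmless everywhere the lemma is applied in the paper, since only the consequence $\log R \ll \log M + n\log T$ is ever used.
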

We apply Lemma \ref{dirichletsboxprinciple} to the sequence $e^{i t_j R}$ and Lemma \ref{localweylslawperiods2}. Given $T$ large we find an $R$ such that $M \ll R \ll M T^{n} \ll M T^{A^2}$. The contribution of the discrete spectrum in the convoluted error term $\left( e(\mathcal{H}, \cdot) \ast \psi_{\epsilon} \right) (R)$ takes the form
\begin{equation} \label{expansionforcoefuptoa}
\sum_{t_j<A}  |\hat{u}_j|^2    \Re \left( G(t_j) \Gamma(it_j)  \right)  \hat{\psi}_{\epsilon}(t_j)  + O_k \left( T^{-1} \log A +\epsilon^{-k} A^{-k} + e^{-\sigma R}   \right).
\end{equation}
The balance $ A \log A =T$, $\log M \asymp \epsilon^{-1}$, $\epsilon^{-2} = A$ implies $\log \log R \asymp \log (\epsilon^{-1}) $ and for $\epsilon \leq 1$ we get:
\begin{eqnarray*}
T^{-1} \log A +\epsilon^{-k} A^{-k} + e^{-\sigma R} = O(\epsilon + e^{-\sigma R}).
\end{eqnarray*}
From part $a)$ of Lemma \ref{gammalemma2} we conclude the sum in (\ref{expansionforcoefuptoa}) is positive.  On the other hand there exists one $\tau \in (0,1)$ such that $\hat{\psi}(x) \geq 1/2$ for $|x| \leq \tau$. Since $ \hat{\psi}_{\epsilon}(t_j) =  \hat{\psi} (\epsilon t_j)$, we get 
\begin{eqnarray*}
 \sum_{t_j<A}  \Re \left(  G(t_j) \Gamma(it_j)  \right)  \hat{\psi}_{\epsilon}(t_j) |\hat{u}_j|^2 
 &\gg& \sum_{t_j<\tau / \epsilon}  \Re \left( G(t_j) \Gamma(it_j)  \right)  |\hat{u}_j|^2 \\
&\gg& \sum_{t_j<\tau / \epsilon}  t_j^{-1}  |\hat{u}_j|^2.
\end{eqnarray*}
When $\G$ is cocompact or has sufficiently small Eisenstein periods in the sense of Definition \ref{sufficientmanydefinition}, we have
\begin{eqnarray*}
 \sum_{t_j<\tau / \epsilon}  t_j^{-1}  |\hat{u}_j|^2  \gg \log(\epsilon^{-1}) 
\gg \log \log R.
\end{eqnarray*}
We conclude that the contribution of the discrete spectrum in $\ e(\mathcal{H}, R) $ is $ \Omega_{+}( \log \log R)$. This implies that if $\Gamma$ is cocompact or has sufficiently small Eisenstein periods, the contribution of the discrete spectrum in $ \int_{\mathcal{H}} e(\mathcal{H}, X;z) ds$ is $\Omega_{+} (X^{1/2} \log \log \log X)$. In particular, this completes the proof of Theorem \ref{result2} for $\Gamma$ cocompact. 

\subsection{The contribution of the continuous spectrum} \label{subsection4.2}

The contribution of the continuous spectrum in $ (e(\mathcal{H}, \cdot ) \ast \psi_{\epsilon})(R)$ is given by the quantity
\begin{eqnarray*} \label{continuouscontributionconj2}
\sum_{\mathfrak{a}} \frac{1}{4\pi} \int_{-\infty}^{\infty} | \hat{E}_{\mathfrak{a}} (1/2+ it) |^2  \Re \left(  G(t) \Gamma(it) e^{i R t}            F \left(-\frac{1}{2}, \frac{3}{2};1+it; \frac{1}{e^{2R} + 1} \right)  \right)  \hat{\psi}_{\epsilon}(t) dt.
\end{eqnarray*}
The convergence of the integral is justified as in section \ref{section3}. Let $\chi_{\mathcal{H}, \frak{a}} (t)$ denote the function
$\chi_{\mathcal{H}, \frak{a}} (t) = |\hat{E}_{\mathfrak{a}} (1/2+ it)|^2 -  |\hat{E}_{\mathfrak{a}} (1/2)|^2$.
Thus the contribution of cusp $\mathfrak{a}$ in $ (e(\mathcal{H}, \cdot ) \ast \psi_{\epsilon})(R)$ splits in
\begin{eqnarray} \label{secondintegralprincipalvalue}
\begin{split}
&& \frac{|\hat{E}_{\mathfrak{a}} (1/2)|^2}{4\pi}   \int_{-\infty}^{\infty} \Re \left( G(t) \Gamma(it) e^{i R t}  F \left(-\frac{1}{2}, \frac{3}{2};1+it; \frac{1}{e^{2R} + 1} \right)     \right)  \hat{\psi}_{\epsilon}(t)  dt \\
&&+\frac{1}{4\pi} \int_{-\infty}^{\infty} \chi_{\mathcal{H}, \frak{a}} (t)  \Re \left(  G(t) \Gamma(it) e^{i R t}  F \left(-\frac{1}{2}, \frac{3}{2};1+it; \frac{1}{e^{2R} + 1} \right)  \right)  \hat{\psi}_{\epsilon}(t) dt.
\end{split}
\end{eqnarray}
Let $\gamma$ be the contour $\gamma = \bigcup_{i=1}^{6} C_i$ defined in the proof of Lemma \ref{hubertransformconvergence}. The function $\psi_{\epsilon}(x)$ is compactly supported in the interval $[-\epsilon, \epsilon]$. Appyling the Paley-Wiener Theorem \cite[Theorem~7.4]{katznelson} we deduce that the holomorphic Fourier transform of $\psi_{\epsilon}(x)$:
\begin{eqnarray*}
\hat{\psi}_{\epsilon}(z) = \int_{-\infty}^{\infty} \psi_{\epsilon}(x) e^{-ixz} dx
\end{eqnarray*} 
is an entire function of type $\epsilon$, i.e. $|\hat{\psi}_{\epsilon}(z)| \ll e^{\epsilon |z|}$, and it is square-integrable over horizontal lines:
\begin{eqnarray*}
\int_{-\infty}^{\infty} |\hat{\psi}_{\epsilon}(v +iu)|^2 dv < \infty.
\end{eqnarray*} 
For fixed $\epsilon>0$ we have
\begin{eqnarray*}
\int_{-\infty}^{\infty} |\hat{\psi}_{\epsilon}(v +iu)|^2 dv = \epsilon^{-1}  \int_{-\infty}^{\infty} |\hat{\psi} (v +i \epsilon u)|^2 dv 
\end{eqnarray*} 
and since $\int_{-\infty}^{\infty} |\hat{\psi} (v +i \epsilon u)|^2 dv$ converges uniformly to $\int_{-\infty}^{\infty} |\hat{\psi} (v)|^2 dv$ as $\epsilon \to 0$ we get 
\begin{eqnarray} \label{psiepsilonhatverticallines}
\int_{-\infty}^{\infty} |\hat{\psi}_{\epsilon}(v +i/2)^2 dv \ll \epsilon^{-1}.
\end{eqnarray}
Consider the integral 
\begin{eqnarray} \label{secondintegralprincipalvalueafterpath}
\int_{\gamma}  G(z) \Gamma(iz) e^{iR z}  F \left(-\frac{1}{2}, \frac{3}{2};1+iz; \frac{1}{e^{2R} + 1} \right) \hat{\psi}_{\epsilon}(z)  dz.
\end{eqnarray}
The integrand is holomorphic inside the contour. Working as in the proof of Lemma \ref{hubertransformconvergence} and applying Cauchy-Schwarz inequality and bound (\ref{psiepsilonhatverticallines}) for the integral over $C_3$ we deduce
\begin{eqnarray*}
\int_{-\infty}^{\infty}  G(t) \Gamma(it) e^{iR t} F \left(-\frac{1}{2}, \frac{3}{2};1+it; \frac{1}{e^{2R} + 1} \right) \hat{\psi}_{\epsilon}(t) dt &=& \pi G(0) \hat{\psi}_{\epsilon}(0) \left(1+O\left(e^{-2R}\right)\right) \\
&&+ O( \epsilon^{-1} e^{-R/2}).
\end{eqnarray*}
To finish the proof of part (a) of Theorem \ref{result2}, we notice that if 
\begin{eqnarray*}
 \int_{-T}^{T} | \hat{E}_{\mathfrak{a}} (1/2 + it)|^2 dt \ll \frac{T}{(\log T)^{1+\delta}},
\end{eqnarray*}
then the function 
\begin{eqnarray*}
H_1(t) = \chi_{\mathcal{H}, \frak{a}} (t)  G(t) \Gamma(it) F \left(-\frac{1}{2}, \frac{3}{2};1+it; \frac{1}{e^{2R} + 1} \right) \hat{\psi}_{\epsilon}(t)
\end{eqnarray*}
is in $L^1 (\mathbb{R})$ independently of $\epsilon$ and $R$. To obtain this we notice that $\chi_{\mathcal{H}, \frak{a}} (t) \Gamma(it)$ remains bounded close to $t=0$, we use the trivial bound $\hat{\psi}_{\epsilon}(t) \leq 1$, Lemma \ref{lemmacoefficentsconjugacy} and we estimate
\begin{eqnarray}
 \int_{-\infty}^{\infty} |H_1(t)| dt &\ll& \int_{-1}^{1} |H_1(t)|  dt   + \sum_{n=0}^{\infty} 2  \int_{2^n}^{2^{n+1}} |t|^{-1} | \hat{E}_{\mathfrak{a}} (1/2 + it)|^2  dt \nonumber \\
 &\ll& \int_{-1}^{1} |H_1(t)|  dt   + \sum_{n=0}^{\infty} 2^{-n} \int_{2^n}^{2^{n+1}}| \hat{E}_{\mathfrak{a}} (1/2 + it)|^2  dt \\
 &\ll& \int_{-1}^{1} |H_1(t)|  dt   + \sum_{n=0}^{\infty} \frac{1}{(n+1)^{1+\delta}} \ll 1. \nonumber
\end{eqnarray}
Applying the Riemann--Lebesgue Lemma we conclude that
\begin{eqnarray}
\lim_{R \to \infty} \int_{-\infty}^{\infty} H_1(t) e^{i R t}   dt = 0.
\end{eqnarray}
Since $\hat{\psi}_{\epsilon}(0)=1$ and $\pi G(0) = 4 \pi^{-1/2} |\Gamma(3/4)|^2$, the contribution of the continuous spectrum in $ (e(\mathcal{H}, \cdot ) \ast \psi_{\epsilon})(R)$ takes the form
\begin{eqnarray} \label{continuouscontributionconj2final}
\frac{1}{\pi^{3/2}}|\Gamma(3/4)|^2 \sum_{\mathfrak{a}} | \hat{E}_{\mathfrak{a}} (1/2) |^2 + O( \epsilon^{-1} e^{-R/2}) + o(1).
\end{eqnarray}
As in the discrete spectrum (see the balance after expansion (\ref{expansionforcoefuptoa})) we choose the balance $\epsilon^{-1} \ll \log R \ll \log \log X$. Hence (\ref{continuouscontributionconj2final}) takes the form 
\begin{eqnarray} \label{continuouscontributionconj2thelastone}
\frac{1}{\pi^{3/2}}|\Gamma(3/4)|^2 \sum_{\mathfrak{a}} | \hat{E}_{\mathfrak{a}} (1/2) |^2 + O(X^{-1/2} \log \log X ) +o(1).
\end{eqnarray}
In particular, this completes the proof of part (a) of Theorem \ref{result2}. 

To prove part (b), we first notice that the contribution from the discrete spectrum is $c(R) + O_k \left( T^{-1} \log A +\epsilon^{-k} A^{-k} + e^{-\sigma R} \right)$, where $c(R)= \Omega_{+}(1)$ if there exists one $\hat{u}_j \neq 0$ and $c(R)$ vanishes otherwise. In this case, the contribution of the continuous spectrum takes the form
\begin{eqnarray} 
\frac{1}{\pi^{3/2}}|\Gamma(3/4)|^2 \sum_{\mathfrak{a}} | \hat{E}_{\mathfrak{a}} (1/2) |^2 + O( \epsilon^{-1} e^{-R/2}) \nonumber + \epsilon^{-1}  \int_{-\infty}^{\infty} H_2(t) e^{i R t} dt
\end{eqnarray}
where, using Theorem \ref{localweylslawperiods2} and estimate (\ref{psiepsilonhatusefulbound}), we deduce that the function
$H_2(t) := \epsilon H_1(t)$
is in $L^1 (\mathbb{R})$ independently of $\epsilon$ and $R$. Applying the Riemann--Lebesgue Lemma, the contribution of the continuous spectrum becomes
\begin{eqnarray*}
\pi^{-3/2}|\Gamma(3/4)|^2 \sum_{\mathfrak{a}} | \hat{E}_{\mathfrak{a}} (1/2) |^2 +  O( \epsilon^{-1} e^{-R/2}) + \epsilon^{-1} Q(R),
\end{eqnarray*}
with $Q(R) = o(1)$ as $R \to \infty$. We choose the balance $\epsilon^{-2}= A$. For $\epsilon=\epsilon_0$ sufficiently small and fixed and letting $R, T \to \infty$ we conclude that the convoluted normalized error $ (e(\mathcal{H}, \cdot ) \ast \psi_{\epsilon})(R)$ takes the form
\begin{eqnarray*}
c(R) + \pi^{-3/2}|\Gamma(3/4)|^2 \sum_{\mathfrak{a}} | \hat{E}_{\mathfrak{a}} (1/2) |^2 +o(1), 
\end{eqnarray*}
The second summand is $\Omega_{+}(1)$ if and only if $\hat{E}_{\mathfrak{a}} (1/2) \neq 0$ for at least one cusp $\mathfrak{a}$. Part (b) now follows.

\begin{remark}
For part (a)  of Theorem \ref{result2}, even if $\Gamma$ has not sufficiently small Eisenstein periods associated to $\mathcal{H}$ but has sufficiently many cusp forms in the sense that 
\begin{equation}
\sum_{0< t_j < T} |\hat{u}_j|^2 \gg T,
\end{equation}
we can derive the $\Omega_{+}(X^{1/2} \log \log \log X)$ bound if we have a polynomial bound for the derivatives of the Eisenstein series on the critical line (see \cite[Chapter~4]{chatzthesis} for details). 
\end{remark}

\subsection{An arithmetic case: the modular group} \label{arithmeticexampleresults}

In this subsection we concentrate to $\Gamma = {\hbox{PSL}_2( {\mathbb Z})}$. The set of primitive indefinite quadratics forms $Q(x,y) = ax^2+bxy +cy^2$ in two variables (that means $(a,b,c) = 1$ and $b^2-4ac = d>0$ is not a square) is in one-to-one correspondence with the set of primitive hyperbolic elements of $\Gamma$ (see \cite[p.~232]{sarnak}). Here we briefly describe this correspondence.

The automorphs of $Q$ is the cyclic group $\hbox{Aut}(Q) \subset {\hbox{SL}_2( {\mathbb Z})}$ which fixes $Q$, under the action
\begin{eqnarray*}
 \left( \begin{array}{cc} a & b/2 \\ b/2 & c \end{array} \right) =  \gamma^{t}  \left( \begin{array}{cc} a & b/2 \\ b/2 & c \end{array} \right) \gamma.
\end{eqnarray*}
Let $M_{Q}$ be a generator of $\hbox{Aut}(Q)$. Then the correspondence $Q \to M_{Q}$ is bijective between indefinite integral quadratic forms in two variables and primitive hyperbolic elements of the modular group. Denote by $\mathcal{H}_{Q}$ the conjugacy class of $M_{Q}$ and by $\ell_{Q}$ the $M_{Q}$-invariant geodesic. Define 
$$r(Q,n) = \# ( \{ (x,y) \in \mathbb{Z}^2 : Q(x,y) = n \} / \hbox{Aut}(Q) ),$$
and let $\zeta(Q,s)$ denote the Epstein zeta function
\begin{eqnarray}
\zeta(Q,s) = \sum_{n=1}^{\infty} \frac{r(Q,n)}{n^s},
\end{eqnarray}
which is absolutely convergent in $\Re(s)>1$. Hecke proved that the Eisenstein period $\hat{E}_{\mathfrak{a}}(s)$ along a normalized segment of $\ell_{Q}$ satisfies
\begin{eqnarray} \label{heckerelation}
\hat{E}_{\mathfrak{a}}(s)  = \frac{ d^{s/2} \Gamma^2(s/2)}{\zeta(2s) \Gamma(s) } \zeta(Q,s)
\end{eqnarray}
(see \cite[eq.~(9.5)]{tsuzuki}). The functional equation of the Eisenstein series implies the functional equation of the Epstein zeta function: 
\begin{eqnarray*}
d^{(1-s)/2} \Gamma^2 \left(\frac{1-s}{2}\right) \pi^{s-1} \zeta(Q,1-s) = d^{s/2} \Gamma^2 \left(\frac{s}{2}\right) \pi^{-s} \zeta(Q,s).
\end{eqnarray*}
The functional equation and the Phragm\'en-Lindel\"of principle imply the convexity bound on the critical line:
\begin{eqnarray} \label{convexityforzetaqoncritical}
\zeta(Q,1/2+it) \ll_{\epsilon} (1+|t|)^{1/2 + \epsilon}, \quad t \in \mathbb{R}.
\end{eqnarray}
Further, for the Epstein zeta function $\zeta(Q,1/2+it)$ the following subconvexity bound holds:
\begin{eqnarray} \label{subconvexityboundtsuzuki}
\zeta(Q,1/2+it) \ll_{\epsilon} (1+|t|)^{1/3 + \epsilon}
\end{eqnarray}

To prove this, write the Epstein zeta function $\zeta(Q, s)$ as a linear combination of zeta functions $\zeta(s, \chi)$, where $\chi$ runs through the class group characters of the number field $Q(\sqrt{d})$ \cite[Ch.~12, p.~216]{iwaniec2}.
The bound (\ref{subconvexityboundtsuzuki}) now follows from the $GL(1)$-subconvexity bound over a number field and the subconvexity bound of S\"ohne \cite{sohne} for Hecke zeta functions with Gr\"ossencharacters.


In this case we deduce that $\G$ has sufficiently small Eisenstein periods; in fact
\begin{eqnarray} \label{modulargroupsmallperiods}
 \int_{-T}^{T} | \hat{E}_{\mathfrak{a}} (1/2 + it)|^2 dt \ll T^{2/3+\epsilon}
\end{eqnarray}
 for every $\epsilon>0$. To prove this, we use the bound
$|\zeta(1+ 2it)|^{-1} \ll (\log|t|)^{2/3} (\log \log |t|)^{1/3}$
as $|t| \to \infty$ \cite[Th. 8.29]{iwanieckowalski} and Stirling's formula, which imply 
\begin{eqnarray*}
 \frac{|\Gamma^2(1/4+it/2)|}{ |\Gamma(1/2+it)|} \ll (1+|t|)^{-1/2}.
\end{eqnarray*}
Thus
\begin{eqnarray*}
\hat{E}_{\mathfrak{a}}(1/2 +it) \ll (1+|t|)^{-1/2} (\log|t|)^{2/3} (\log \log |t|)^{1/3} \zeta(Q,1/2+it) \ll_{\epsilon} (1+|t|)^{-1/6 +\epsilon}
\end{eqnarray*}
for every $\epsilon >0$, and the bound (\ref{modulargroupsmallperiods}) follows. In particular, the subconvexity bound (\ref{subconvexityboundtsuzuki}) implies
\begin{eqnarray*}
\int_{\ell_Q} e(\mathcal{H}_{Q}, X;z) ds(z) = \Omega_{+}(X^{1/2} \log \log \log X).
\end{eqnarray*}

\section{Pointwise $\Omega$-results for the error term} \label{section5}

In this section we prove Propositions \ref{result3}, \ref{result4}, and hence Theorem \ref{result5}, where we consider pointwise $\Omega$-results for the error term $e(\mathcal{H}, X;z)$. We start with the discrete average. The arguments of the proofs follow the ideas from sections \ref{section3} and \ref{section4} (see \cite[Chapter 4]{chatzthesis} for detailed proofs).

\subsection{Proof of Proposition \ref{result3}: The discrete spectrum}
For $K>0$ an integer we pick equally spaced $z_1, z_2, ... , z_{K}$ points on the invariant closed geodesic $\ell$ of $\mathcal{H}$ with $\rho(z_{i+1}, z_i) = \delta$. Hence $\delta = \mu(\ell) / K$. For $R =\log (X + U)$ we define the quantity
\begin{displaymath}
N_{K} (\mathcal{H}, R)= \frac{1}{K} \sum_{m=1}^{K} \frac{e(\mathcal{H}, X;z_{m})}{X^{1/2}}
\end{displaymath}
and we consider the convolution
\begin{eqnarray*}
\left( \psi_{\epsilon} \ast N_K(\mathcal{H}, \cdot) \right) (R) &=& \int_{-\infty}^{\infty} \psi_{\epsilon}(R-Y)  N_K (\mathcal{H}, Y) dY.
\end{eqnarray*}
Using Lemma \ref{lemmacoefficentsconjugacy}, the properties of $\psi_{\epsilon}$, Theorem \ref{localweylslaw} and Theorem \ref{localweylslawperiods2} we conclude
\begin{eqnarray*}
\left( \psi_{\epsilon} \ast N(\mathcal{H}, \cdot)_{K} \right) (R) =  \sum_{t_j >0}  \hat{u}_j  \left(\frac{1}{K} \sum_{m=1}^{K} u_j(z_{m}) \right)   \Re \left(  G(t_j)  \Gamma(it_j) e^{i t_j R} \right)   \hat{\psi}_{\epsilon}(t_j) + O(e^{-\sigma R} ).
\end{eqnarray*}
For $A>1$, using Stirling's formula, Theorem \ref{localweylslaw}, Theorem \ref{localweylslawperiods2} and estimate \ref{psiepsilonhatusefulbound} for $k \geq 1$ we estimate the tail of the series for $t_j >A$ is $ O_k (\epsilon^{-k} A^{1/2-k})$. The partial sum of the series for $t_j \leq A$ can be handled as follows: by the definition of the period integral $\hat{u}_j$, as $K \to \infty$  we get
\begin{eqnarray*}
\frac{\mu(\ell)}{K} \sum_{m=1}^{K} u_j(z_{m}) =\sum_{m=1}^{K} u_j(z_{m}) \delta \to  \overline{\hat{u}_j}
\end{eqnarray*}
uniformly, for every $j=1,...,n$ (where $n$ is such that $t_n \leq A < t_{n+1}$,  hence $n \asymp A^2$). That means for every small $\epsilon_1>0$ there exists a $K_0 = K_0(\epsilon_1) \geq 1$ such that
\begin{eqnarray} \label{aproximationtoujhatdiscrete}
 \hat{u}_j  \left(\frac{1}{K} \sum_{m=1}^{K} u_j(z_{m}) \right) =  \frac{ |\hat{u}_j|^2 }{\mu(\ell)} + O \left( \epsilon_1 \hat{u}_j \right)
\end{eqnarray}
for every $K \geq K_0$. We get  
\begin{eqnarray*}
\left( \psi_{\epsilon} \ast N(\mathcal{H}, \cdot)_{K} \right) (R) &=& \frac{1}{\mu(\ell)} \sum_{t_j \leq A}  |\hat{u}_j|^2   \Re \left(  G(t_j) \Gamma(it_j) e^{i  t_j R} \right)   \hat{\psi}_{\epsilon}(t_j) \\
&&+  O_k \left( \epsilon_1\sum_{t_j \leq A}  \hat{u}_j  \Re \left(  G(t_j)  \Gamma(it_j)  e^{i t_j R} \right) \hat{\psi}_{\epsilon}(t_j) + \epsilon^{-k} A^{1/2-k} + e^{-\sigma R} \right).
\end{eqnarray*}
Using Theorem \ref{localweylslawperiods2} the $O$-term is bounded by $O(\epsilon_1 A^{1/2})$. For the main term, apply Dirichlet's principle (Lemma \ref{dirichletsboxprinciple}) to the exponentials $e^{it_j R}$. For every $M$ and $T$ we find $ M \ll R \ll M T^{A^2}$ such that
\begin{eqnarray*}
\left( \psi_{\epsilon} \ast N(\mathcal{H}, \cdot)_{K} \right) (R) &=& \frac{1}{\mu (\ell)} \sum_{t_j \leq A}  |\hat{u}_j|^2   \Re \left(  G(t_j) \Gamma(it_j) \right)   \hat{\psi}_{\epsilon}(t_j) \\
&&+  O_k (\epsilon^{-k} A^{1/2-k} + T^{-1} \log A + \epsilon_1 A^{1/2} +    e^{-\sigma R} ).
\end{eqnarray*}
The balance $\epsilon^{-1} = A^{1 -3/(2k+2)}$, $\epsilon_1 =A^{-1/2} \epsilon$ implies the $O$-term is $O( T^{-1} \log A + \epsilon +e^{-\sigma R} )$.
By Lemma \ref{gammalemma2}, the coefficients of the above sum are all positive. For the function $\psi$ we pick $\tau \in (0,1)$ such that $\hat{\psi}(x) \geq 1/2$ for $|x| \leq \tau$. 
It follows that if $\Gamma$ is cocompact or has sufficiently small Eisenstein periods we bound the above sum from below by 
\begin{eqnarray*} 
 \frac{1}{\mu(\ell)} \sum_{t_j \leq A}  |\hat{u}_j|^2   \Re \left(  G(t_j) \Gamma(it_j) \right)   \hat{\psi}_{\epsilon}(t_j) \gg \log(\epsilon^{-1}).
\end{eqnarray*}
We deduce that for every $\epsilon >0$ we can find a sufficiently large $K = K(\epsilon)$ such that
\begin{eqnarray*}
\left( \psi_{\epsilon} \ast N_K(\mathcal{H}, \cdot) \right) (R) = k(\epsilon) + O(\epsilon + e^{-\sigma R}).
\end{eqnarray*}
with $k(\epsilon) =\Omega_{+}( \log(\epsilon^{-1}))$. If $\Gamma$ is cocompact, choosing $\epsilon=\epsilon_0$ sufficiently small and $K= K(\epsilon_0)$ sufficiently large, for $R, T \to \infty$ we conclude Proposition \ref{result3} for $\G$ cocompact.

\subsection{The continuous spectrum}

The contribution of the continuous spectrum in the convolution $\left( \psi_{\epsilon} \ast N_K(\mathcal{H}, \cdot) \right) (R)$ is given by
\begin{eqnarray} \label{lastresultcofinite}
\sum_{\mathfrak{a}} \frac{1}{4\pi} \int_{-\infty}^{\infty}&&\hat{E}_{\mathfrak{a}} (1/2+ it)  \left( \frac{1}{K} \sum_{m=1}^{K} E_{\mathfrak{a}} (z_{m}, 1/2+ it) \right)  \\
&&\times \Re \left(  G(t)  \Gamma(it) F \left(-\frac{1}{2}, \frac{3}{2};1+it; \frac{1}{e^{2R} + 1} \right)e^{i tR} \right)   \hat{\psi}_{\epsilon}(t)dt. \nonumber
\end{eqnarray}
For $A>0$, by Theorem \ref{localweylslawperiods2}, asymptotic (\ref{asymptot}) and estimate (\ref{psiepsilonhatusefulbound}) it follows that the contribution of $|t| > A$ in the above integral is $O (\epsilon^{-k} A^{1/2-k})$. For $|t| \leq A$ and for any small $\epsilon_2>0$ we approximate the Eisenstein period integral as
\begin{equation} 
 \frac{1}{K} \sum_{m=1}^{K} E_{\mathfrak{a}} (z_{m}, 1/2+ it) =  \hat{E}_{\mathfrak{a}} (1/2- it) + O(\epsilon_2)
\end{equation}
for every $K \geq K_0$ with $K_0=K_0(\epsilon_2)$ sufficiently large. The contribution of the continuous spectrum (\ref{lastresultcofinite}) takes the form
\begin{eqnarray} \label{lastresultcofiniteexpanded}
&&\sum_{\mathfrak{a}} \frac{1}{4\pi} \int_{|t| \leq A} | \hat{E}_{\mathfrak{a}} (1/2+ it) |^2 \Re \left(  G(t) \Gamma(it) e^{i R t}  F \left(-\frac{1}{2}, \frac{3}{2};1+it; \frac{1}{e^{2R} + 1} \right)  \right)  \hat{\psi}_{\epsilon}(t) dt  \\
&&+O_k \left(\epsilon_2 \sum_{\mathfrak{a}} \int_{|t| \leq A}  \hat{E}_{\mathfrak{a}} (1/2+ it)G(t)  \Gamma(it) e^{i R t} F \left(-\frac{1}{2}, \frac{3}{2};1+it; \frac{1}{e^{2R} + 1} \right) \hat{\psi}_{\epsilon}(t)dt + \epsilon^{-k} A^{1/2-k}\right). \nonumber
\end{eqnarray}
By subsection \ref{subsection4.2} and Theorem \ref{localweylslawperiods2}, the first summand of (\ref{lastresultcofiniteexpanded}) takes the form
\begin{eqnarray} 
\frac{1}{\pi^{3/2}}|\Gamma(3/4)|^2 \sum_{\mathfrak{a}} | \hat{E}_{\mathfrak{a}} (1/2) |^2 + O(  \epsilon^{-1} Q_1(R) +\epsilon^{-k} A^{-k}),
\end{eqnarray}
with $Q_1(R) \to 0$ as $R \to \infty$. For the second summand of (\ref{lastresultcofiniteexpanded}), we set $\theta_{\mathcal{H}, \frak{a}} (t) = \hat{E}_{\mathfrak{a}} (1/2+ it) - \hat{E}_{\mathfrak{a}} (1/2)$ and we use the contour integral method to deduce that the contribution of the continuous spectrum in $\left( \psi_{\epsilon} \ast N_K(\mathcal{H}, \cdot) \right) (R)$ is
\begin{eqnarray*} 
\frac{1}{\pi^{3/2}}|\Gamma(3/4)|^2 \sum_{\mathfrak{a}} | \hat{E}_{\mathfrak{a}} (1/2) |^2 + O_k \left(  \epsilon^{-1} Q_1(R) + \epsilon^{-k} A^{-k+1/2} + \epsilon_2 + \epsilon_2 \epsilon^{-1} e^{-R/2} + \epsilon_2 \log A \right).
\end{eqnarray*}
Choosing $\epsilon_2 = \epsilon^2$ and $\epsilon^{-1} = A^{1-3/(2k+2)}$ as before we conclude the $O$-term is $O( \epsilon^{-1} Q_1(R) + \epsilon)$. If $\Gamma$ has at least one $\hat{u}_j \neq 0$ with $\lambda_j >1/4$ then for fixed and sufficiently small $\epsilon$ the contribution of the discrete spectrum in $\left( \psi_{\epsilon} \ast N_K(\mathcal{H}, \cdot) \right) (R)$ is $\Omega_{+} (1)$. If $\Gamma$ has at least one nonzero Eisenstein period integral then for fixed and sufficiently small $\epsilon$ we get that the contribution of the continuous spectrum in $\left( \psi_{\epsilon} \ast N_K(\mathcal{H}, \cdot) \right) (R)$ is also $\Omega_{+}(1)$. This completes the proof of Proposition \ref{result3}.

\subsection{Proof of Proposition \ref{result4}} \label{endoftheproofs}

In this subsection we prove Proposition \ref{result4}, where we study the average of a normalized error term on the geodesic $\ell$. As we have already mentioned, this completes the proof of Theorem \ref{result5}. In particular, to simplify the estimates we will prove Proposition \ref{result4} for the average 
\begin{displaymath}
M_{\mathcal{H},z} (X) = \frac{1}{Y} \int_{1}^{Y} \frac{e(\mathcal{H}, x ;z)}{x^{1/2}} d y,
\end{displaymath}
where we define $Y$ and $y$ be given by $Y= X+ \sqrt{X^2 - 1}$ and $y= x+ \sqrt{x^2 - 1}$. We will need the following lemma for the Huber transform. 

 \begin{lemma}\label{hubertransformconvergenceinxvariable}
For $y = x + \sqrt{x^2 - 1}$ we have
\begin{eqnarray*}
\lim_{Y \to \infty}  \int_{-\infty}^{\infty} \frac{1}{Y} \int_{1}^{Y} \frac{2 d(f_x, t)}{x^{1/2}} dy dt = \frac{4}{\sqrt{\pi}} |\Gamma(3/4)|^2.
\end{eqnarray*}
\end{lemma}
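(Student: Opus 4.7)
The plan is to mirror the proof of Lemma \ref{hubertransformconvergence} almost verbatim, replacing the radial averaging $\frac{1}{T}\int_0^T \cdots dr$ by $\frac{1}{Y}\int_1^Y \cdots dy$. Since $y = e^r$, the key observation is that $e^{itr} = y^{it}$, so the elementary moment is
$$\frac{1}{Y}\int_1^Y y^{it}\,dy = \frac{Y^{it}}{1+it}-\frac{1}{Y(1+it)}.$$
Substituting the expression \eqref{df_hypergeometric} for $2d(f_x,t)/x^{1/2}$ into the double integral reduces the problem, modulo an $O(1/Y)$ term, to analysing
$$\Re\int_{-\infty}^{\infty} G(t)\Gamma(it)\,\frac{Y^{it}}{1+it}\,F\!\left(-\tfrac12,\tfrac32;1+it;\tfrac{1}{e^{2r}+1}\right)\hat{\psi}(t)\,dt,$$
where the hypergeometric factor is $1+O((1+|t|)^{-1}Y^{-2})$ by \eqref{hypergeometricspecialvalue}.

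First I would justify convergence of the outer $t$-integral exactly as in Lemma \ref{hubertransformconvergence} (the Huber transform commutes with convolution in $x$, so the resulting expression is well-defined as the spectral expansion of the mollified error). Then I would introduce the same closed contour $\gamma = \bigcup_{i=1}^{6} C_i$ used there and apply Cauchy's theorem to
$$\int_{\gamma} G(z)\Gamma(iz)\,\frac{1}{Y}\int_1^Y y^{iz} F\!\left(-\tfrac12,\tfrac32;1+iz;\tfrac{1}{e^{2r}+1}\right)dy\, dz = 0.$$
The integrand is holomorphic inside $\gamma$: the only potential new pole, $z=i$ coming from $1/(1+iz)$, lies above the horizontal segment $C_3$ at height $1/2$ and is therefore excluded, while the simple pole of $\Gamma(iz)$ at $z=0$ is treated by the semicircle $C_6$.

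The contribution of the semicircle $C_6$ as $\varepsilon \to 0$ is $-\pi G(0)(1+O(Y^{-1}))$, since $\operatorname{Res}_{z=0}\Gamma(iz) = -i$, $F(-\tfrac12,\tfrac32;1;0)=1$, and $Y^{iz}|_{z=0}=1$. The vertical segments $C_2,C_4$ give $O(M^{-2}Y^{-1})$ by Stirling. The main new technical point, and what I expect to be the only slightly delicate estimate, is the horizontal shift $C_3 = [-M + i/2,\ M+i/2]$: there $|y^{iz}| = y^{-1/2}$ so $\frac{1}{Y}\int_1^Y y^{iz}\,dy = O\bigl(Y^{-1/2}(1+|t|)^{-1}\bigr)$, and combining this with the Stirling bound $|G(t)\Gamma(it)| \asymp (1+|t|)^{-1}$ from \eqref{asymptot} gives an $L^1$ bound of $O(Y^{-1/2})$ on this segment. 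Putting everything together, letting $M\to\infty$ and then $Y\to\infty$, and taking real parts, one obtains
$$\lim_{Y\to\infty}\int_{-\infty}^{\infty}\frac{1}{Y}\int_1^Y \frac{2d(f_x,t)}{x^{1/2}}\,dy\,dt = \pi\, G(0) = \frac{4}{\sqrt{\pi}}|\Gamma(3/4)|^2,$$
using $G(0) = \frac{2\sqrt{2}}{\pi}\cdot\frac{\Gamma(3/4)^2}{\Gamma(3/2)}\cdot\frac{\sqrt{2}}{2} = \frac{4\Gamma(3/4)^2}{\pi^{3/2}}$.
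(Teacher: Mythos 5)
Your proposal is correct and follows essentially the same route as the paper, which simply states that the proof ``follows similarly'' to Lemma \ref{hubertransformconvergence}: the substitution $y=e^{r}$, the same contour $\gamma=\bigcup_{i=1}^{6}C_i$, the residue $-\pi G(0)$ from the indentation $C_6$, and the new $O(Y^{-1/2}(1+|t|)^{-1})$ bound on $C_3$ coming from $|y^{iz}|=y^{-1/2}$ are exactly the intended modifications, and your evaluation $\pi G(0)=4\pi^{-1/2}|\Gamma(3/4)|^2$ is right. The only blemishes are cosmetic: the stray factor $\hat{\psi}(t)$ in your displayed integral does not belong in this lemma, and the ``modulo an $O(1/Y)$ term'' remark is cleaner if one keeps the entire function $\tfrac{1}{Y}\int_1^Y y^{iz}\,dy$ inside the contour integral so that the pole of $\Gamma(iz)$ at $z=0$ is handled once by the residue computation.
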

The proof of Lemma follows similarly with that of Lemma \ref{hubertransformconvergence}. We can now prove Proposition \ref{result4}.

\begin{proof} (of Proposition \ref{result4}). Assume first that $\Gamma$ is cocompact. We pick $z_1, z_2, ... , z_{K}$ equally spaced points on the invariant closed geodesic $\ell$ of $\mathcal{H}$ with $\rho(z_{i+1}, z_i) = \delta$.
Using Lemma \ref{lemmacoefficentsconjugacy}, Theorem \ref{localweylslaw} and Theorem \ref{localweylslawperiods2} we conclude
\begin{eqnarray} \label{seriesaverageforprop1.12}
\frac{1}{K} \sum_{m=1}^{K} M_{\mathcal{H},z_{m}} (X) =  \sum_{t_j >0}  \hat{u}_j  \left(\frac{1}{K} \sum_{m=1}^{K} u_j(z_{m}) \right)   \Re \left(  G(t_j)  \Gamma(it_j) \frac{1}{Y} \int_{1}^{Y} e^{i t_j r} dy \right) + O(Y^{-\sigma} ),
\end{eqnarray}
For $A>1$, we use Theorem \ref{localweylslawperiods2} and we apply the estimate (\ref{psiepsilonhatusefulbound}) to bound the tail of the series in (\ref{seriesaverageforprop1.12}) for $t_j \geq A$ by $O (A^{-1/2})$. For the partial sum of the series, we approximate the period integral $\hat{u}_j$ uniformly, for every $j=1,...,n$ (where $n \asymp A^2$). For any $\epsilon_1>0$ we find a $K_0 = K_0(\epsilon_1) \geq 1$ such that for every $K \geq K_0$:
\begin{eqnarray} \label{aproximationtoujhatdiscrete2}
 \hat{u}_j  \left(\frac{1}{K} \sum_{m=1}^{K} u_j(z_{m}) \right) =  \frac{ |\hat{u}_j|^2 }{\mu(\ell)} + O \left( \epsilon_1 \hat{u}_j \right).
\end{eqnarray}
We get  
\begin{eqnarray*}
\frac{1}{K} \sum_{m=1}^{K} M_{\mathcal{H},z_{m}} (X) &=& \frac{1}{\mu(\ell)} \sum_{t_j < A}  |\hat{u}_j|^2  \Re \left(  G(t_j)  \Gamma(it_j) \frac{Y^{it_j}}{1+it_j} \right) +  O ( Y^{-1} + \epsilon_1 + A^{-1/2} + Y^{-\sigma} ).
\end{eqnarray*}
For the main term, apply Dirichlet's principle (Lemma \ref{dirichletsboxprinciple}) to the exponentials $e^{it_j R} = Y^{it_j}$. For each $T$ we can find $ R \ll T^{A^2}$ such that
\begin{eqnarray*}
\frac{1}{K} \sum_{m=1}^{K} M_{\mathcal{H},z_{m}} (X) &=& \frac{1}{\mu(\ell)} \sum_{t_j < A}  |\hat{u}_j|^2  \Re \left(  \frac{G(t_j)  \Gamma(it_j) }{1+it_j} \right) +  O ( T^{-1} +  \epsilon_1 + A^{-1/2} + Y^{-\sigma} ).
\end{eqnarray*}
By Theorem \ref{localweylslawperiods2}, as $A \to \infty$ the sum remains bounded and, for $\Gamma$ cocompact, there exist infinitely many $j$'s such that $\hat{u}_j \neq 0$. By Lemma \ref{gammalemma2}, all the nonzero terms are negative. Hence, there exists an $A_0$ such that for every $A \geq A_0$:
\begin{equation} \label{finalsumlowerbound}
 \left| \sum_{t_j < A}  |\hat{u}_j|^2  \Re \left(  \frac{G(t_j)  \Gamma(it_j) }{1+it_j} \right) \right|  \gg 1.
\end{equation}
For $T, Y$ and $A$ fixed and sufficiently large and $\epsilon_1$ fixed and sufficiently small, we find a $K= K_0$ fixed such that 
\begin{eqnarray*}
\frac{1}{K} \sum_{m=1}^{K} M_{\mathcal{H},z_{m}} (X) &=& \Omega_{-}(1). 
\end{eqnarray*}
Notice that the lower bound (\ref{finalsumlowerbound}) holds if and only if there exists at least one nonzero $\hat{u}_j$ with $\lambda_j>1/4$. 

Assume now that $\Gamma$ is not cocompact. In this case, the contribution of the discrete spectrum in 
$$\frac{1}{K} \sum_{m=1}^{K} M_{\mathcal{H},z_{m}} (X)$$
 is given by
\begin{eqnarray} \label{continuouscontributionlastproposition}
\frac{1}{K} \sum_{m=1}^{K} \sum_{\mathfrak{a}} \frac{1}{4\pi} \int_{-\infty}^{\infty} \frac{1}{Y} \int_{0}^{Y} \frac{2 d(f_x, t)}{x^{1/2}} dy \hat{E}_{\mathfrak{a}}(1/2+it) E_{\mathfrak{a}}(z_m, 1/2+it) dt.
\end{eqnarray}
We cut the integral for $|t| \leq A$ and $|t|>A$. In the interval $|t| \leq A$ we approximate the Eisenstein period $\epsilon_2$-close. Applying Lemma \ref{hubertransformconvergenceinxvariable} and following a standard calculation, expansion (\ref{continuouscontributionlastproposition}) takes the form
\begin{eqnarray*}
\frac{|\Gamma(3/4)|^2}{ \pi^{3/2}} \sum_{\mathfrak{a}} |\hat{E}_{\mathfrak{a}} (1/2) |^2 &+& \Re \left(\sum_{\mathfrak{a}} \frac{1}{4\pi} \int_{-\infty}^{\infty} \chi_{\mathcal{H}, \mathfrak{a}} (t) \frac{G(t) \Gamma(it)}{1+it} Y^{it} dt \right) \\
&+& O(A^{-1/2} + \epsilon_2 + Y^{-1}),
\end{eqnarray*}
with $K = K(\epsilon_2, A)$. Since for $\Gamma$ all the Eisenstein periods $\hat{E}_{\mathfrak{a}} (1/2)$ vanish, applying Riemann--Lebesgue Lemma for the second term the proposition follows for $A, Y$ sufficiently large and $\epsilon_2$ sufficiently small.
\end{proof}

\section{Upper bounds on geodesics}

In this section, we apply the key observation arising in the spectral theory of the conjugacy problem, that is the slower divergence for the sums of period integrals of Theorem \ref{localweylslawperiods2}, to the error terms of both the classical problem (described in subsection \ref{subsectiononeone}) and the conjugacy class problem. In particular, for the error $e(X;z,w)$ we prove the following average result.

\begin{theorem} \label{theoremclassicalongeodesics} Let $\ell_0$ be a closed geodesic of $\GmodH$ and $e(X;z,w)$ be the error term of the classical counting problem.  Then
\begin{eqnarray*}
\int_{\ell_0} e(X;z,w) d s(w) = O_{\ell_0} ( X^{1/2} \log X).
\end{eqnarray*}
\end{theorem}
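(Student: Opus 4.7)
The plan is to exploit the spectral expansion of $e(X;z,w)$ (see (\ref{mainformulaclassical}) and its spectral derivation) and integrate it term-by-term in the variable $w$ along $\ell_0$. For each Maass form this interchange converts the factor $\overline{u_j(w)}$ into the conjugate of a period integral $\overline{\hat{u}_j}$ (in the sense of (\ref{periodintegral}), associated to $\ell_0$ rather than to $\mathcal{H}$), and similarly converts the factor $\overline{E_{\mathfrak{a}}(w,1/2+it)}$ in the Eisenstein part into $\overline{\hat{E}_{\mathfrak{a}}(1/2+it)}$. Denoting by $h(t)$ the Selberg/Harish-Chandra transform attached to the classical counting kernel, Stirling's formula gives the standard bound $|h(t)| \ll X^{1/2}(1+|t|)^{-3/2}$ throughout the range $|t|\ll X$, with rapid decay beyond, so convergence issues are controlled by the usual smoothing/partial-sum arguments.

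The main estimate is then a dyadic Cauchy--Schwarz argument. For $|t_j| \sim T$ with $1 \leq T \leq X$,
\begin{displaymath}
\left|\sum_{|t_j|\sim T} h(t_j)\, u_j(z)\, \overline{\hat{u}_j}\right| \ll X^{1/2} T^{-3/2} \left(\sum_{|t_j|\sim T} |u_j(z)|^2\right)^{1/2}\left(\sum_{|t_j|\sim T} |\hat{u}_j|^2\right)^{1/2}.
\end{displaymath}
Local Weyl's law (Theorem \ref{localweylslaw}) bounds the first factor by $O_z(T)$, while Theorem \ref{localweylslawperiods2} bounds the second by $O_{\ell_0}(T^{1/2})$. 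This is precisely where the geodesic averaging pays off: the period integrals have a square-sum growing only linearly in $T$, saving a factor of $T^{1/2}$ over what a pointwise treatment via local Weyl's law alone would give. Each dyadic block thus contributes $O(X^{1/2})$, and summing over the $O(\log X)$ dyadic scales $T \in [1,X]$ produces the bound $O(X^{1/2}\log X)$; the tail $|t|>X$ is negligible by the rapid decay of $h$.

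The continuous spectrum is handled by the same dyadic Cauchy--Schwarz, now using the pointwise bound for $|E_{\mathfrak{a}}(z,1/2+it)|$ (polynomial in $|t|$, locally uniform in $z$) in place of $|u_j(z)|^2$ (summed via Theorem \ref{localweylslaw}), and the integrated Eisenstein-period bound $\int_{-T}^{T}|\hat{E}_{\mathfrak{a}}(1/2+it)|^2 dt \ll T$ from Theorem \ref{localweylslawperiods2}. The only genuine technical point is to justify rigorously the interchange of integration over $\ell_0$ with the non-absolutely-convergent spectral expansion of $e(X;z,w)$; this I would handle in the standard fashion, either by truncating the spectrum and letting the cutoff tend to infinity after the geodesic integration, or by first smoothing $\chi_{[0,(X-2)/4]}$ and controlling the smoothing error by a fixed power of $X^{-1}$. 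No information beyond Theorems \ref{localweylslaw} and \ref{localweylslawperiods2} is used, so the argument applies uniformly to any cofinite $\Gamma$.
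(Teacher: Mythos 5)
Your overall strategy is the paper's: integrate over $\ell_0$ to convert $\overline{u_j(w)}$ into the period $\hat{u}_j$, then run a dyadic Cauchy--Schwarz using the local Weyl law (Theorem \ref{localweylslaw}) for $\sum|u_j(z)|^2$ and Theorem \ref{localweylslawperiods2} for $\sum|\hat{u}_j|^2$; the saving of $T^{1/2}$ per dyadic block coming from the linear growth of the period sums is exactly the point of the theorem, and you have identified it correctly.

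However, there is a genuine gap in the execution. You work with the sharp cutoff $\chi_{[0,(X-2)/4]}$ and assert that its Selberg/Harish-Chandra transform satisfies $|h(t)|\ll X^{1/2}(1+|t|)^{-3/2}$ on $|t|\ll X$ ``with rapid decay beyond.'' The second clause is false: for the sharp indicator the transform oscillates with envelope $\asymp X^{1/2}|t|^{-3/2}$ for \emph{all} $|t|\geq 1$, with no improvement past $|t|\asymp X$ (just as the Fourier transform of the indicator of a Euclidean disc decays only like $|\xi|^{-3/2}$). With only $|t|^{-3/2}$ decay, your own dyadic computation gives a contribution $\asymp X^{1/2}$ from \emph{every} dyadic block $|t_j|\sim T$, so the spectral sum does not converge even after the geodesic averaging; the tail $|t|>X$ is not negligible, it is divergent. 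The paper's proof repairs exactly this by replacing $\chi$ with the trapezoidal majorant/minorant $k_{\pm}$ of width $Y$ as in (\ref{kplus})--(\ref{kminus}), whose transforms satisfy $h_{\pm}(t)\ll X^{1/2}|t|^{-5/2}\min\{|t|,XY^{-1}\}$: the extra decay $|t|^{-5/2}$ beyond the transition point $|t|\asymp X/Y$ makes the high-frequency blocks sum to $O(X^{1/2})$, the $\log(X/Y)$ blocks below the transition give the $X^{1/2}\log X$, and the cost is an additive $O(Y)$ from the annulus, balanced at $Y=X^{1/2}$. Your closing remark about smoothing points in the right direction but misjudges its role and its cost: the smoothing is not a device to justify an interchange of limits with error ``a fixed power of $X^{-1}$'' --- it is what makes the sum converge at all, and the error it introduces is $O(Y)$, the width of the smoothing annulus. (One can take $Y$ to be a small power of $X$, but not dismiss it as negligible a priori.) A secondary caveat: for the continuous spectrum you invoke a pointwise polynomial bound for $E_{\mathfrak{a}}(z,1/2+it)$; for a general cofinite $\Gamma$ the clean route is to keep Cauchy--Schwarz and use the integrated bound $\int_{-T}^{T}|E_{\mathfrak{a}}(z,1/2+it)|^2\,dt\ll T^2$ from Theorem \ref{localweylslaw}, exactly parallel to the discrete case.
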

The proof of this result is similar to the proof for the classical pointwise bound $O(X^{2/3})$. The standard idea here is again to approximate the kernel defined $k(u) = \chi_{[0,(x-2)/4]}$ by appropriate step functions $k_{\pm}(u)$ and use the observation
\begin{eqnarray*}
\sum_{|t_j| < T} \frac{u_j (z) \hat{u}_j}{t_j^{3/2}} \ll \log T.
\end{eqnarray*}
Similarly, for the error term $e(\mathcal{H}, X;z)$ of the conjugacy class problem we can deduce the upper bound
\begin{eqnarray*}
\int_{\ell_0} e(\mathcal{H}, X;z) d s(z) = O_{\ell_0}( X^{1/2} \log X).
\end{eqnarray*}
Since the proof of this bound is similar with that of Theorem \ref{theoremclassicalongeodesics}, it is omitted. 
\begin{proof} (of Theorem \ref{theoremclassicalongeodesics}) The proof follows the steps of the proof for the classical pointwise bound $O(X^{2/3})$, sketched in \cite[Ch.~12, p.~173]{iwaniec}.
 Assume first the cocompact case. Define the functions $k_{-}(u) \leq k(u) \leq k_{+}(u)$ by
\begin{equation}\label{kplus}
k_{+}(u) = \left\{ \begin{array}{lcl}
1, & \mbox{for} & u \leq \frac{X-2}{4},
\\ \displaystyle\frac{-4u}{Y} + \frac{X+Y-2}{Y}, & \mbox{for} & \frac{X-2}{4} \leq u \leq \frac{X+Y-2}{4},
\\ 0, & \mbox{for} &  \frac{X+Y-2}{4} \leq u,
\end{array} \right. 
\end{equation}
\begin{equation}\label{kminus}
k_{-}(u) = \left\{ \begin{array}{lcl}
1, & \mbox{for} & u \leq \frac{X-Y -2}{4}, 
\\ \displaystyle \frac{-4u}{Y} + \frac{X-2}{Y}, & \mbox{for} & \frac{X-Y-2}{4} \leq u \leq \frac{X-2}{4},
\\ 0, & \mbox{for} & \frac{X-2}{4} \leq u.
\end{array} \right. 
\end{equation}
We denote their Selberg/Harish-Chandra transform by $h_{\pm}(t)$. Using equations \cite[p.~2,~eq.(1.2)]{chatz} we get
\begin{eqnarray*}
 e(X;z,w) \ll \sum_{t_j \in \mathbb{R}-\{0\}} h_{\pm}(t_j) u_j(z) \overline{u_j(w)} + O( Y +X^{1/2} ).
\end{eqnarray*}
Hence, using estimates \cite[p.~173, eq.~(12.9)]{iwaniec} we conclude
\begin{eqnarray}\label{lastexpressionclassical}
 \int_{\ell_0} e(X;z,w) d s (w)  &\ll& \sum_{t_j \in \mathbb{R}-\{0\}} h_{\pm}(t_j) u_j(z) \hat{u}_j + O(Y +X^{1/2}) \nonumber \\
 &\ll& X^{1/2} \sum_{t_j} |t_j|^{-5/2} \min \{|t_j|, X Y^{-1} \} |u_j(z)| |\hat{u}_j| + O( Y +X^{1/2} ).
\end{eqnarray}
Applying Cauchy-Schwarz inequality, local Weyl's laws for the Maass forms $u_j(z)$ and Theorem \ref{localweylslawperiods2} for the periods $\hat{u}_j$, we deduce that (\ref{lastexpressionclassical}) is bounded by
\begin{eqnarray*}
X^{1/2} \sum_{t_j \leq X/Y} |t_j|^{-3/2}  |u_j(z)| |\hat{u}_j| + X^{1/2} \sum_{t_j >X/Y} |t_j|^{-5/2}  \frac{X}{Y} |u_j(z)| |\hat{u}_j|
\ll X^{1/2} \log (X/Y) &+& X^{1/2}.
\end{eqnarray*}
We conclude 
\begin{eqnarray*}
 \int_{\ell_0} e(X;z,w) d s(w)  \ll X^{1/2} \log (X/Y) + Y +X^{1/2}
\end{eqnarray*}
and the statement follows for $Y=X^{1/2}$. For the cofinite case, the result follows similarly, using the relevant bounds for the Eisenstein series and their period integrals.
\end{proof}

\end{document}